\theoremstyle{plain}
\numberwithin{equation}{section} \theoremstyle{plain}
\newtheorem{theorem}[section]{Theorem}
\newtheorem{proposition}[section]{Proposition}
\newtheorem{lemma}[section]{Lemma}
\newtheorem{definition}[section]{Definition}
\newtheorem{conjecture}[section]{Conjecture}
\newtheorem{claim}[section]{Claim}
\newtheorem{remark}[section]{Remark}
\renewcommand{\leq}{\leqslant}
\renewcommand{\geq}{\geqslant}
\newsavebox{\proofbox}
\savebox{\proofbox}{\begin{picture}(7,7)  \put(0,0){\framebox(7,7){}}\end{picture}}
\title{Exposition of Elekes-Szab\'{o} paper}
\author{Wang Hong}
\date{} % Delete this line to display the current date
\begin{document}

\maketitle
This is a memoire of stage M2 of l'Universit\'{e} de Paris-sud. The author did her master memoire at UCLA (IPAM) under the mentorship of Joszef Solymosi and Emmanuel Breuillard.
\tableofcontents* % the asterisk means that the contents itself isn't put into the ToC

\chapter{Introduction}
Given an $n\times n\times n$ Cartesian product in $\mathbb{C}^{3}$, and a complex algebraic surface $V$. Elekes and Szab\'{o} proved \cite{elekes-szabo} that if $V$ has large intersection with this Cartesian product, then it has a multiplicative form. This paper gives a quantitative refinement of the theorem of Elekes and Szabo\'{o}.

 Various mathematical problems can be transformed to this type of question: intersection between an algebraic variety and a grid. For example, Sharir, Sheffer and Solymosi's result \cite{sharir2013distinct} on distinct distances problem on two lines, Pach and de Zeeuw's result \cite{pach2013distinct} on distinct distances on algebraic curves in the plane, as well as triple points between three families of unit circles problem \cite{elekes1999sums} \cite{raz2014triple}. 
 
Elekes and R\'{o}nyai first established a result \cite{elekes2000combinatorial} on the case where $V$ is the graph of a polynomial (rational function) in $\mathbb{C}[x,y]$, and Raz-Sharir-Solymosi\cite{raz2014polynomials} improved the bound in \cite{elekes2000combinatorial} to $n^{11/6}$. However, for most applications, one could not express an algebraic relation as an explicit function.  So one needs a theorem on algebraic relations of implicit function.  Elekes and Szab\'{o} \cite{elekes-szabo} proved this result for an arbitrary two-dimensional algebraic surface $V$ with bound $n^{2-\eta}$ for implicit small $\eta$, and generalized it to certain dimension $2k$ algebraic variety on $(k\times k\times k)$-dimensional space. In addition, Emmanuel Breuillard  showed that if $\eta$ is small enough, the algebraic group identified is in fact nilpotent (see future preprint of E. Breuillard and H. Wang).
We improved this bound to $n^{11/6+\epsilon}$ for arbitrary small $\epsilon>0$ and give an explicit bound for the latter case. In the real case (where $A$, $B$ and $C$ are real lines $\mathbb{R}$), we obtained the bound without $\epsilon$; this result is independently obtained by Raz, Sharir and de Zeeuw. Another variation is a result of Tao \cite{tao2012expanding} on expanding polynomials on large characteristic finite fields. 

The main goal of our paper is to explain the original Elekes-Szab\'{o} paper in more details and establish an improved bound on the exponent of $n$. For impatient readers who want to know Elekes-Szab\'{o} theorem, we give a proof of it in Chapter~\ref{section: elekes-szabo} assuming the main technical lemma, which is proved in Chapter~\ref{section: composition lemma}. And Chapter~\ref{section: complex} includes an improved incidence theorem on $\mathbb{C}^{3}$, which results in the improved $11/6+\epsilon$ bound on Elekes-Szab\'{o} Theorem. In their original paper, Elekes and Szab\'{o} established a theorem on higher dimension, replacing $\mathbb{C} \times \mathbb{C} \times \mathbb{C} $ with higher dimensional varieties $A\times B\times C$ where $\dim(A)=\dim(B)=\dim(C)$. We also includes its proof in Chapter ~\ref{section: elekes-szabo} with explicit exponents on $n$, and the corresponding incidence theorem is established in Chapter~\ref{section: incidence theorem}. 

It is worth noting that the technical part, Composition Lemma, of Elekes-Szab\'{o} theorem relies heavily on algebraic geometry (or model theory \cite{hrushovski1986contributions}).  And Guth-Katz \cite{guth2010erdos} applied polynomial methods to solve joints and distinct distances problems. This coincidence suggests the surprising application of algebraic method in combinatorics problems. 

\section*{Notation.} We use the usual asymptotic notation $X=\mathcal{O}(Y)$ or $X\lesssim Y$ to denote the estimate $X\leq CY$ for some absolute constant $C$. If we need the implied constant $C$ to depend on additional parameters, we indicate this by subscripts, thus for instance $X=\mathcal{O}_{d}(Y)$ or $X\lesssim_{d} Y$ for some quantity $C_{d}$ depending on $d$.

\section*{Acknowledgements}
The author is  very grateful to Emmanuel Breuillard and Jozsef Solymosi for comments and references, and Yang Cao for helpful discussions on algebraic geometry. This research was performed while the author was visiting the Institute for Pure and Applied Mathematics (IPAM), which is supported by the National Science Foundation.

\chapter{Preliminaries}
There are two types of combinatorics problems we are interested in: point-variety incidence problems and sum-product problems.  Among which, we have three main conjectures: Erd\"{o}s distinct distances conjecture, Erd\"{o}s unit distances conjecture and sum-product conjecture. 
\begin{conjecture}
(Erd\"{o}s distinct distances Conjecture) Let $N$ be a large natural number, the least number $\#\{|x_{i}-x_{j}|: 1\leq i <j\leq N\}$ of distances that are determined by $N$ points $x_{1},..., x_{N}$ in the plane is $\mathcal{O}(N/\sqrt{N})$.
\end{conjecture}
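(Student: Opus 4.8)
Read with $N/\sqrt{N}$ corrected to the intended $N/\sqrt{\log N}$ (the literal $\sqrt{N}$ being far too small even for the grid), the statement has two halves. The ``$\mathcal{O}(\cdot)$'' half asserts that \emph{some} configuration of $N$ points determines only $\mathcal{O}(N/\sqrt{\log N})$ distinct distances, and this has a one-line proof: on the $\sqrt{N}\times\sqrt{N}$ integer grid every squared distance is an integer in $[0,2N]$ which is a sum of two integer squares, and by the Landau--Ramanujan theorem the number of sums of two squares up to $X$ is $\asymp X/\sqrt{\log X}$, so the grid realizes only $\mathcal{O}(N/\sqrt{\log N})$ distances. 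The content of the conjecture is the matching lower bound --- every $N$-point set in the plane determines $\gtrsim N/\sqrt{\log N}$ distinct distances --- and the rest of this plan concerns that.

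The route is the Elekes--Sharir reduction followed by polynomial partitioning, as in Guth--Katz \cite{guth2010erdos}. Fix $P$ with $N$ points, let $x$ be its number of distinct distances, let $n_{\delta}$ be the number of ordered pairs of $P$ at distance $\delta>0$ (so $\sum_{\delta}n_{\delta}=N(N-1)$), and put $Q=\sum_{\delta}n_{\delta}^{2}$, the number of $(a,b,c,d)\in P^{4}$ with $|a-b|=|c-d|\neq 0$. Under the Elekes--Sharir change of variables each ordered pair of distinct points of $P$ becomes a line in a $3$-dimensional space of orientation-preserving rigid motions, two such lines meeting exactly when some rigid motion sends $a\mapsto c$, $b\mapsto d$ --- which forces $|a-b|=|c-d|$ --- so that $Q$ is comparable to $\sum_{p}\binom{t_{p}}{2}$, where $t_{p}$ counts how many of these $N^{2}$ lines pass through the point $p$. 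Using the structural fact that at most $N$ of the lines lie in any plane or on any regulus, one partitions $\R^{3}$ by the zero set of a polynomial of controlled degree, estimates the cells by Szemer\'edi--Trotter and the degree-one intersection bound, and treats the points on the partitioning surface by the flecnode/ruled-surface analysis; this gives $Q=\mathcal{O}(N^{3}\log N)$, whence Cauchy--Schwarz $\big(\sum_{\delta}n_{\delta}\big)^{2}\le x\,Q$ yields $x\gtrsim N/\log N$ --- the Guth--Katz bound, short of the conjecture by $\sqrt{\log N}$.

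To recover the missing $\sqrt{\log N}$ one must not apply Cauchy--Schwarz globally. Decompose the distances dyadically, $D_{j}=\{\delta:2^{j}\le n_{\delta}<2^{j+1}\}$, so that $x=\sum_{j}|D_{j}|$, $\sum_{j}2^{j}|D_{j}|\asymp N^{2}$, and $Q\asymp\sum_{j}2^{2j}|D_{j}|$. Within a single scale Cauchy--Schwarz is essentially tight, so the entire $\sqrt{\log N}$ loss is an interaction \emph{between} scales: a set $P$ defeats the global bound precisely when the $\ell^{1}$-profile of $(n_{\delta})$ (which fixes $\sum_{j}2^{j}|D_{j}|=N^{2}$) and its $\ell^{2}$-profile (which fixes $Q$) are supported on separated dyadic scales, and the grid does exactly this, by about $\sqrt{\log N}$. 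The plan is therefore (i) to upgrade the partitioning argument to a \emph{scale-localized} incidence bound --- running the polynomial method only against the $\approx 2^{j}|D_{j}|$ Elekes--Sharir lines attached to distances of multiplicity $\approx 2^{j}$, a distance of multiplicity $\approx 2^{j}$ being seen via circles of that radius carrying $\approx 2^{j}/N$ points of $P$ --- and (ii) to combine these scale bounds with the $\ell^{1}$ constraint by a convexity argument across the $O(\log N)$ scales, with the gain concentrated at the scales where the multiplicities are genuinely large, so as to push the bound $x\gtrsim N^{4}/Q=N/\log N$ up to $x\gtrsim N/\sqrt{\log N}$.

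The main obstacle is step (i), the scale-localized incidence estimate, and I expect it to be genuinely hard. First, no bound on $Q$ alone can help: $Q\asymp N^{3}\log N$ is sharp for the grid, so any improvement must happen before Cauchy--Schwarz. Second, the Guth--Katz polynomial method is global --- it never distinguishes the $N^{2}$ lines by the multiplicity of the underlying distance --- so a genuinely scale-dependent bound would require a new algebraic input: for instance a rigidity statement for families of many \emph{distinct} circles each incident to many points of $P$, or a bound on the number of $r$-rich Elekes--Sharir lines that is sharp for every $r$ and not only in the $\ell^{2}$ average that Guth--Katz actually use; a naive dyadic split without such an input merely reshuffles the $\log N$ loss. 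Third, even granted a scale bound, assembling it into the sharp exponent needs anticoncentration for the multiplicity profile --- a limit on how many scales can be simultaneously active and on how the $N^{2}$ units of $\ell^{1}$-mass are spread among them --- and the grid shows this is delicate. It is precisely this cluster of issues, invisible to the polynomial method that settled the $\log N$-off version, where the difficulty lies.
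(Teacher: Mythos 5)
The statement you were given is stated in the paper only as a conjecture in the preliminaries; the paper offers no proof of it, and indeed (after correcting the evident typo $N/\sqrt{N}\to N/\sqrt{\log N}$, which you rightly flag) it is an open problem --- the best known lower bound is the Guth--Katz $\gtrsim N/\log N$, which the paper itself cites as ``nearly solved.'' So there is no paper argument to compare yours against, and the question is only whether your proposal closes the gap on its own. It does not. The parts you actually carry out are the known ones: the grid construction with Landau--Ramanujan for the $\mathcal{O}(N/\sqrt{\log N})$ upper bound, and the Elekes--Sharir reduction plus polynomial partitioning giving $Q=\mathcal{O}(N^{3}\log N)$ and hence $x\gtrsim N/\log N$. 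Everything beyond that --- your steps (i) and (ii) --- is a wish list rather than an argument: no scale-localized incidence bound is formulated as a precise statement, no proof strategy for it is given beyond naming the kind of algebraic input that would be needed (a sharp bound on $r$-rich Elekes--Sharir lines for every $r$, or a rigidity statement for rich circles), and no anticoncentration mechanism for the multiplicity profile is specified. You say yourself that $Q$ alone cannot help because $Q\asymp N^{3}\log N$ is attained by the grid, and that a naive dyadic split only reshuffles the $\log N$; that is exactly the point at which the proof would have to begin, and it is the point at which your text stops.

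To be clear about where the failure would occur if one tried to execute the plan as written: within a single dyadic scale the Cauchy--Schwarz step is tight, so the only way to gain is a genuinely multiplicity-dependent incidence estimate, and the polynomial method as used in \cite{guth2010erdos} is blind to which distance a line comes from --- the structural input (at most $N$ lines in a plane or regulus) is the same at every scale, so re-running the partitioning on the sub-family of lines attached to multiplicity-$2^{j}$ distances gives back the same $\ell^{2}$-type bound with no improvement. Until the new algebraic ingredient you gesture at is stated and proved, the proposal establishes only the already-known $N/\log N$ bound and leaves the conjectured $N/\sqrt{\log N}$ untouched; as a proof of the statement it has a genuine, and in fact acknowledged, gap.
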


\begin{conjecture}(Erd\"{o}s unit distances Conjecture) Let $N$ be a large natural number, the largest number $\#\{(x_{i}, x_{j}): 1\leq i <j \leq N, |x_{i}-x_{j}|=1\}$ of pairs of points in $x_{1},...,x_{N}$ in the plane that have unit distance is $\mathcal{O}(N^{1+\epsilon})$ for arbitrary small positive $\epsilon$.
\end{conjecture}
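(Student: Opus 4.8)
Write $P=\{x_1,\dots,x_N\}\subset\R^2$ and let $u(P)$ be the quantity to be bounded, the number of unordered pairs of $P$ at distance exactly $1$. The first move is the standard reformulation as an incidence count: replacing each $x_i$ by the unit circle $C_i$ centred at it, a pair $\{x_i,x_j\}$ at distance $1$ is precisely a pair of incidences $x_j\in C_i$ and $x_i\in C_j$, so $2\,u(P)$ equals the number $I(P,\mathcal{C})$ of incidences between the $N$ points $P$ and the family $\mathcal{C}=\{C_1,\dots,C_N\}$ of $N$ unit circles. The goal $u(P)=\mathcal{O}(N^{1+\epsilon})$ is then a bound $I(P,\mathcal{C})=\mathcal{O}(N^{1+\epsilon})$, and the plan is to prove it through a structure theorem in the spirit of the Elekes--Szab\'{o} theorem of this paper: show that a point set with $u(P)\geq N^{1+\delta}$ must be nearly covered by boundedly many similar copies of a single planar lattice, and then count unit distances inside a lattice by elementary number theory.

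\medskip

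\noindent\emph{Step 1: polynomial partitioning and the $N^{4/3}$ barrier.} Pick a polynomial $f$ of degree $D$ whose zero set $Z(f)$ cuts $\R^2$ into $\mathcal{O}(D^2)$ cells, each meeting $\mathcal{O}(N/D^2)$ points of $P$. A unit circle is connected and of degree $2$, so by B\'{e}zout it is cut by $Z(f)$ into $\mathcal{O}(D)$ arcs and enters $\mathcal{O}(D)$ cells, and at most $D/2$ of the circles are components of $Z(f)$. Incidences internal to the cells recurse, and incidences on $Z(f)$ number $\mathcal{O}(ND)$. Balancing $D$ reproduces the Szemer\'{e}di--Trotter-type point--circle bound $I(P,\mathcal{C})=\mathcal{O}(N^{4/3})$ (two unit circles meet in at most two points, so the arrangement behaves like one of pseudocircles). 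As in the distinct-distances work of Guth--Katz, the on-curve term is the real obstruction: unlike the three-dimensional rigid-motion hypersurface exploited there, the unit-distance relation carries no extra Lie-theoretic structure that would collapse it, so one must inject external structural information to get below $N^{4/3}$.

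\medskip

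\noindent\emph{Step 2: a structure theorem and the lattice count.} This is where I would use rigidity: prove that if $u(P)\geq N^{1+\delta}$ then, after discarding $o(N)$ of the points, $P$ is contained in $\mathcal{O}_{\delta}(1)$ similar copies of one fixed lattice $\Lambda\subset\R^2$. Granting this, rescale so that the relevant squared length is an integer $n=\mathcal{O}(N)$; the number of unit-distance pairs inside a single copy with at most $N$ points is then at most $N$ times the number $r_\Lambda$ of lattice vectors of that length, and $r_\Lambda$ is a value of a representation function of maximal order $N^{o(1)}$ --- for $\Lambda=\Z^2$ this is the divisor bound $r_2(n)\leq 4\tau(n)=n^{o(1)}$. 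Summing over the $\mathcal{O}_{\delta}(1)$ copies gives $u(P)\leq N^{1+o(1)}$, hence $u(P)=\mathcal{O}(N^{1+\epsilon})$ for every fixed $\epsilon>0$; this is also why Erd\"{o}s's $\sqrt{N}\times\sqrt{N}$ grid, which realises $N^{1+c/\log\log N}$ unit distances, is the extremal shape.

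\medskip

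\noindent\emph{Main obstacle.} Everything hinges on the structure theorem of Step 2, and this is exactly where the Conjecture is open: there is no known way to turn ``many unit distances'' into ``approximate lattice structure''. Morally it is an inverse theorem for the convolution of a point mass with arc-length measure on the unit circle, and it lies beyond the reach of current additive-combinatorial and incidence-geometric methods. The Elekes--Szab\'{o} machinery of this paper is the natural candidate but does not apply verbatim: it produces rigidity for a \emph{fixed} algebraic hypersurface $V$ having large intersection with a \emph{Cartesian product} grid $A\times B\times C$, whereas the unit-distance problem concerns the single surface $\{(p,q)\in\R^2\times\R^2:|p-q|=1\}$ tested against an arbitrary point set that need not be a product and lives in $\R^2$ rather than on a line. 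Removing the Cartesian-product hypothesis from Elekes--Szab\'{o}, or finding some other route to the lattice structure theorem, is the genuine content of the Conjecture; I would expect that, and not any of the preceding steps, to be the obstruction.
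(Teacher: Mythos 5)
There is a genuine gap, and you have in fact named it yourself: Step 2 is not proved, and it is not a technical lemma you can defer --- it \emph{is} the conjecture. The paper states this as a Conjecture precisely because it is the open Erd\H{o}s unit distances problem; it offers no proof, only the remark that the best known upper bound, $\mathcal{O}(N^{4/3})$, comes from the Spencer--Szemer\'{e}di--Trotter incidence argument you reproduce in Step 1. So what you have written is an honest programme, not a proof: Step 1 is correct but only recovers the known $N^{4/3}$ barrier, and Step 2 (``many unit distances implies the points sit in $\mathcal{O}_{\delta}(1)$ similar copies of a fixed lattice'') has no known proof and no known approach; as you observe, the Elekes--Szab\'{o} machinery of this paper does not apply because it requires a Cartesian-product grid and a fixed low-dimensional algebraic surface, neither of which is available here.

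Two smaller points, in case you pursue the programme: even granting the structure theorem as stated, the conclusion does not immediately follow --- the $o(N)$ discarded points could still carry up to $o(N)\cdot N^{1/3}$ unit distances (harmless) but, more seriously, unit distances \emph{between} different lattice copies, and between discarded and retained points, are not controlled by the per-copy representation-function count, so the deduction needs an iteration or a bipartite incidence bound on top of the lattice count. And the per-copy count itself requires that the common difference lattice be (a similar copy of) a ring-of-integers-type lattice for the divisor-bound argument $r_{\Lambda}(n)=n^{o(1)}$ to apply; for a general rank-two lattice one must argue via the associated binary quadratic form, which still works but should be said. None of this affects the verdict: the statement remains a conjecture, and your proposal correctly locates, but does not close, the missing step.
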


\begin{conjecture}(Sum-product Conjecture)
Let $A$ be a finite subset of real numbers. The sumset and productset of $A$ are defined by $ A+A=\{a+b:a,b\in A\}$ and $A\cdot A=\{ab: a,b\in A\} $. Then
\[\max(|A+A|, |A\cdot A|)\geq \mathcal{O}(|A|^{2-\epsilon})\] for arbitrary small positive $\epsilon$.
\end{conjecture}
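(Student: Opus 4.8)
Since the assertion is a lower bound, the plan is to argue by contradiction in the \emph{critical regime}: fix $\epsilon>0$ and suppose there are arbitrarily large finite $A\subseteq\R$ with $\max(|A+A|,|A\cdot A|)\leq |A|^{2-\epsilon}$, so that, writing $n=|A|$ and $K=n^{1-\epsilon}$, one has both $|A+A|\leq Kn$ and $|A\cdot A|\leq Kn$. The aim is to force on $A$ a rigid algebraic structure incompatible with $n$ being large. The first step is to propagate the two hypotheses: by the Pl\"unnecke--Ruzsa inequality in $(\R,+)$, $|\ell A-mA|\leq K^{C(\ell,m)}n$ for all fixed $\ell,m$, and by the same inequality in $(\R^{\times},\cdot)$ (discarding $0\in A$ is harmless), $|A^{(\ell)}\cdot A^{(-m)}|\leq K^{C'(\ell,m)}n$, where $A^{(\ell)}$ denotes the $\ell$-fold product set; thus $A$ has simultaneously bounded additive and bounded multiplicative ``dimension''.

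The second step is a double application of Freiman-type structure. Feeding $|A+A|\leq Kn$ into the Freiman--Ruzsa theorem over $\R$ (via Freiman's theorem in $\Z$ after rescaling, or directly via Chang's theorem) places $A$ inside a generalized arithmetic progression $P$ of rank $r=O_\epsilon(1)$ and volume $|P|\leq K^{O_\epsilon(1)}n=n^{1+o(1)}$. Applying the same theorem to $\log|A|=\{\log|a|:a\in A\}$ --- the finitely many sign patterns costing only a bounded factor --- places $A$, up to signs, inside a \emph{geometric} generalized progression $Q=\{\,b\,g_1^{y_1}\cdots g_s^{y_s}:0\leq y_j<M_j\,\}$ of rank $s=O_\epsilon(1)$ and volume $|Q|\leq n^{1+o(1)}$. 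In particular $A\subseteq P\cap Q$ with $|P|,|Q|=n^{1+o(1)}$, hence $|P\cap Q|\geq n$.

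The third, decisive, step is to contradict this by showing that a rank-$r$ additive GAP $P$ and a rank-$s$ multiplicative GAP $Q$ always satisfy $|P\cap Q|=\left(|P||Q|\right)^{o(1)}$, which forces $n\leq n^{o(1)}$. One expands the two membership conditions ``$x\in P$'' and ``$x\in Q$'' and eliminates $x$: a large intersection then produces many solutions, inside the bounded-rank multiplicative group generated by $g_1,\dots,g_s$, of a system of unit equations $u_1+\dots+u_t=1$. By the Evertse--Schlickewei--Schmidt theorem (a consequence of the Subspace Theorem) such an equation has $O_{t,s}(1)$ \emph{non-degenerate} solutions, so a large intersection forces the solutions to be degenerate, i.e.\ to lie in a proper coset of a lower-rank subgroup; one then descends to a lower-rank sub-GAP and induces on $r+s$. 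This descent is precisely a \emph{structured} analogue of the Composition Lemma of Chapter~\ref{section: composition lemma}, with the product group $(\R,+)^r\times(\R^\times,\cdot)^s$ playing the role of the one-dimensional algebraic group there.

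The main obstacle is entirely in this last step, and it is why the statement remains a conjecture. The Subspace Theorem is ineffective, so as stated it yields only $|P\cap Q|=o(n)$, not a power saving; what is really needed is a quantitative bound on the number of \emph{degenerate} solutions, uniform in the rank, which is exactly what must be built into the inductive Composition Lemma above --- and no such bound is known in the range required to reach the exponent $2-\epsilon$. A secondary difficulty is that Freiman--Ruzsa over $\R$ currently loses polynomial factors in $K$, so the ``$n^{1+o(1)}$'' of the second step is in truth $n^{1+\epsilon'}$ with $\epsilon'$ tied to $\epsilon$; the contradiction closes with the claimed exponent only if the power saving from the third step strictly beats this loss, which would require the sharp ``polynomial Freiman--Ruzsa''-type input on the real line.
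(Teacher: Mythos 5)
There is nothing in the paper to compare against: the statement you are proving is labelled a \emph{conjecture} (it is the Erd\H{o}s--Szemer\'edi sum--product conjecture), the paper offers no proof of it, and none is known. Your text is accordingly not a proof but a strategy sketch, and you yourself concede that the decisive third step is unavailable: the Evertse--Schlickewei--Schmidt/Subspace machinery is ineffective and gives no uniform quantitative bound on degenerate solutions, so the claimed power saving $|P\cap Q|=(|P||Q|)^{o(1)}$ cannot be closed in the range needed. A self-acknowledged missing lemma of this strength cannot be papered over by calling it a ``structured analogue of the Composition Lemma''; the Composition Lemma of Chapter~\ref{section: composition lemma} concerns bounded-degree families of multi-functions and bounded-dimensional algebraic groups, and it neither implies nor suggests any bound on intersections of additive with multiplicative progressions.

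There is also a concrete quantitative error earlier, independent of the admitted gap. In the critical regime you set $K=n^{1-\epsilon}$, so the hypotheses $|A+A|\leq Kn$ and $|A\cdot A|\leq Kn$ carry almost no information: Freiman's theorem (in any known form, including the version quoted in Chapter 2, where $d^{*}$ and $C^{**}$ depend on the doubling constant $C$) produces a generalized progression whose rank and volume grow with $K$ --- polynomially or worse --- not with $\epsilon$ alone. With $K$ a power of $n$, the containing progression has rank tending to infinity and volume far exceeding $n^{1+o(1)}$, so your second step's claim of rank $O_{\epsilon}(1)$ and volume $n^{1+o(1)}$ is false, and even a sharp ``polynomial Freiman--Ruzsa'' input would only improve the rank to roughly $\log K\sim\log n$, still unbounded. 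Hence both the structural step and the intersection step fail, which is consistent with the statement's status in the paper: it is an open problem, recorded as motivation, with the best known exponent being $4/3$ (Solymosi), far from $2-\epsilon$.
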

The three conjectures, as simple as their statements are,  have many applications to other areas: harmonic analysis, number theory, computer science, etc. In this chapter, we will explain very briefly how incidence geometry and sum-product theory relate to the three conjectures and how they are connected to the subject of this memoire: Elekes-Szab\'{o} Theorem.
\subsection*{Incidence geometry}

Given a finite collection $P$ of points in $\mathbb{F}^{n}$ ( $\mathbb{F} = \mathbb{R}, \mathbb{C}$ or finite fields), and a finite collection $L$ of subvarieties of $\mathbb{F}^{n}$, incidence geometry studies the cardinality of 
\[\mathcal{I}(P,L):=\{(p,l)\in P\times L: p\in l\}\]
First let us state a theorem of Szemer\'{e}di and Trotter on point-line incidences over plane:

\begin{theorem}(Szemer\'{e}di-Trotter Theorem). \cite{szemeredi1983extremal}
Let $P$ be a finite set of points in $\mathbb{R}^{2}$, and let $L$ be a finite set of lines in $\mathbb{R}^{2}$. Let $\mathcal{I}(P,L):=\{(p,l)\in P\times L: p\in l\}$ be the set of incidences. Then
\[ |\mathcal{I}(P,L)|\leq C(|P|^{2/3}|L|^{2/3}+|P|+|L|\]
for some absolute constant $C$.
\end{theorem}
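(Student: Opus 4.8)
The plan is to deduce the Szemer\'{e}di--Trotter bound from the \emph{crossing number inequality} of Ajtai--Chv\'{a}tal--Newborn--Szemer\'{e}di and Leighton, following Sz\'{e}kely's argument. First I would recall and prove that inequality: any simple graph $G$ with $v$ vertices and $e \geq 4v$ edges satisfies $\operatorname{cr}(G) \geq e^{3}/(64\,v^{2})$, where $\operatorname{cr}(G)$ denotes the least number of edge crossings over all drawings of $G$ in the plane. This I would establish in two stages. The elementary stage is the Euler-formula bound $\operatorname{cr}(G) \geq e - 3v$: a simple planar graph on $v \geq 3$ vertices has at most $3v-6$ edges, and deleting one edge per crossing from an optimal drawing produces a planar graph. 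The amplification stage is probabilistic: keep each vertex of $G$ independently with probability $p = 4v/e \leq 1$, apply the elementary bound to the resulting random induced subgraph, and take expectations (an induced crossing survives precisely when its four endpoints do), obtaining $p^{4}\operatorname{cr}(G) \geq p^{2}e - 3pv$, which rearranges to the claimed cubic bound for this choice of $p$.

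Second, given the point set $P$ and line set $L$, I would discard every line meeting $P$ in at most one point -- these account for at most $|L|$ incidences -- and build a graph $G$ on vertex set $P$ by joining, along each remaining line $\ell$ carrying $k_{\ell} \geq 2$ points of $P$, the consecutive points in their order along $\ell$; this contributes $k_{\ell}-1$ edges. Then $v = |P|$ and $e = \sum_{\ell}(k_{\ell}-1) \geq |\mathcal{I}(P,L)| - 2|L|$. Since two distinct points lie on a unique line, drawing each edge as the corresponding segment yields a drawing in which any two lines of $L$ cross at most once, so $\operatorname{cr}(G) \leq \binom{|L|}{2} \leq |L|^{2}$.

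Third, I would combine the two facts. If $e < 4v$ then $|\mathcal{I}(P,L)| < 4|P| + 2|L|$ and we are done; otherwise the crossing number inequality gives $|L|^{2} \geq \operatorname{cr}(G) \geq e^{3}/(64|P|^{2})$, hence $e \lesssim |P|^{2/3}|L|^{2/3}$ and therefore $|\mathcal{I}(P,L)| \lesssim |P|^{2/3}|L|^{2/3} + |L|$. Collecting the two regimes and adjusting the absolute constant yields the stated bound $|\mathcal{I}(P,L)| \leq C(|P|^{2/3}|L|^{2/3} + |P| + |L|)$.

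I expect the main obstacle to be the crossing number inequality itself, and specifically its probabilistic amplification step: the elementary bound $\operatorname{cr}(G) \geq e - 3v$ is only linear in $e$ and far too weak to give the theorem, and the entire force of the method comes from applying it to random induced subgraphs with the sampling probability tuned to $4v/e$. By comparison the graph construction and the bookkeeping for lines with few incident points are routine, though one must check that edges arising from different lines are genuinely distinct and that the segment drawing is legitimate.
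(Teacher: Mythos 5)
Your proposal is correct: it is the standard Sz\'{e}kely argument, deducing Szemer\'{e}di--Trotter from the crossing number inequality of Ajtai--Chv\'{a}tal--Newborn--Szemer\'{e}di and Leighton, and all the steps check out --- the simplicity of the graph (consecutive points on two distinct lines would force the lines to coincide), the count $e\geq|\mathcal{I}(P,L)|-2|L|$ after discarding lines with at most one point, the bound $\operatorname{cr}(G)\leq\binom{|L|}{2}$ from the segment drawing, and the probabilistic amplification with $p=4v/e$ giving $\operatorname{cr}(G)\geq e^{3}/(64v^{2})$. Note, however, that the paper does not prove this theorem at all: it is quoted from the literature with a citation, and the original Szemer\'{e}di--Trotter proof, as well as the incidence bounds the paper actually does prove (the Incidence Theorem of Chapter~\ref{section: incidence theorem} via semi-cylindrical cutting, and Theorem~\ref{thm: complex} via polynomial partitioning), all proceed by space decomposition rather than by graph drawing. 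The comparison is instructive: your crossing-number route is shorter and entirely elementary, but it is tied to the real plane and to families of curves with bounded pairwise intersections and bounded multiplicity, whereas the partition-based arguments used in the paper are the ones that extend to higher-dimensional parameter spaces, to $\mathbb{C}^{2}\simeq\mathbb{R}^{4}$, and to the combinatorial-dimension setting that the Elekes--Szab\'{o} machinery requires; this is precisely why the paper adopts decompositions rather than the crossing inequality for its quantitative improvements.
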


This result is optimal. Consider for any positive integer $N\in \mathbb{Z}^{+}$ the set of points on the grid: 
\[P=\{(a,b)\in \mathbb{Z}^{2}: 1\leq a\leq N; 1\leq b\leq 2N^{2}\} \] and the set of lines
\[L=\{(x,mx+b): m,b\in \mathbb{Z}; 1\leq m\leq N; 1\leq b\leq N^{2}\}.\]
Clearly, $ |P|=2N^{3}$ and $|L|=N^{3}$. Since each lines is incident to $N$ points, the number of incidences is $N^{4}$ which matches the above bound.

Since then, the Szemer\'{e}di-Trotter theorem has various consequences and applications. And people tend to generalize it: 
incidences between points and pseudo-lines/curves in two-dimensional Euclidean space ~\cite{pach1998number} \cite{wang2013bounds};
incidences between points and algebraic varieties in higher dimension $\mathbb{R}^{d}$ or $\mathbb{C}^{d}$~\cite{solymosi2012incidence}; more difficultly, incidence problem over finite fields; etc. 

 In 2010, Guth and Katz~\cite{guth2010erdos} nearly solved the Erd\"{o}s distinct distance conjecture with the lower bound $\mathcal{O}(N/\log(N))$ by applying an incidence theorem between points and lines in $\mathbb{R}^{3}$. The Erd\"{o}s unit distances problem is closely related to incidences between points and unit circles, which is still little understood. And the best upper bound is given by Spencer, Joel and Szemer\'{e}di via a Szemer\'{e}di-Trotter type bound of incidence problem ~\cite{spencer1984unit}.  Also, incidence theorem is a key step in the proof of Elekes-Szab\'{o} Theorem. 
 
On the other hand, the inverse problem of incidence theorem is very interesting and difficult: if a configuration attain the optimal bound, what can we say about the configuration, does there exist a classification of extremal configurations? The inverse problem is very little understood; even the inverse problem of Szemer\'{e}di-Trotter Theorem, we could not say much about.
\subsection*{Sum-product problem}
It is easy to select a finite set $A$ from real numbers, such that it has small sums (an arithmetic progression for example) or finite subset of small products( by taking a geometric progression). However, one has a simple observation: it is not likely that both sumset and productset are small simultaneously. 

In 1983, Erd\"{o}s and Szemer\'{e}di proved 
\begin{theorem}(Erd\"{o}s-Szemer\'{e}di Theorem~\cite{erdHos1983sums})
Let $A$ be a finite subset of real numbers. There exists a positive constant $\epsilon$ such that 
\[\max(|A+A|, |A\cdot A|)\geq \mathcal{O}(|A|^{1+\epsilon}).\]
\end{theorem}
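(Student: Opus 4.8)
\noindent\emph{Proof idea.} The plan is to derive this from the Szemer\'edi--Trotter theorem stated above, following the argument of Elekes; this in fact yields the stronger bound $\max(|A+A|,|A\cdot A|)\gtrsim |A|^{5/4}$, so any $\epsilon\le 1/4$ works. The idea is to build, out of the additive and multiplicative structure of $A$, a configuration of points and lines in $\mathbb{R}^2$ that is forced to have many incidences. Concretely, I would take the point set
\[ P:=(A+A)\times(A\cdot A)\subset\mathbb{R}^2, \qquad |P|=|A+A|\,|A\cdot A|, \]
and the family of lines
\[ L:=\bigl\{\,\ell_{a,b}:=\{(x,y):y=a(x-b)\}\ \big|\ a,b\in A\,\bigr\}. \]
Since $\ell_{a,b}$ has slope $a$ and $y$-intercept $-ab$, distinct pairs $(a,b)$ yield distinct lines, so $|L|=|A|^2$.

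The step where the arithmetic of $A$ really enters is the observation that every line in $L$ is very rich in points of $P$. Indeed, for any $c\in A$ the point $(b+c,\,ac)$ lies on $\ell_{a,b}$, because $a\bigl((b+c)-b\bigr)=ac$; and $(b+c,ac)\in(A+A)\times(A\cdot A)=P$. As $c$ ranges over $A$ these $|A|$ points are distinct, so each line of $L$ passes through at least $|A|$ points of $P$, whence
\[ |\mathcal I(P,L)|\ \ge\ |A|\cdot|L|\ =\ |A|^3. \]

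Now I would feed this into Szemer\'edi--Trotter: $|A|^3\le|\mathcal I(P,L)|\lesssim |P|^{2/3}|L|^{2/3}+|P|+|L|$. Here $|L|=|A|^2\ll|A|^3$, so the term $|L|$ can never dominate; and if the term $|P|=|A+A|\,|A\cdot A|$ were to dominate we would already get $\max(|A+A|,|A\cdot A|)^2\gtrsim |A|^3$, i.e.\ an even better exponent $3/2$. In the remaining case the main term dominates, so $|A|^3\lesssim(|A+A|\,|A\cdot A|)^{2/3}(|A|^2)^{2/3}$, which rearranges to $|A+A|\,|A\cdot A|\gtrsim|A|^{5/2}$ and hence $\max(|A+A|,|A\cdot A|)\gtrsim|A|^{5/4}$. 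Taking $\epsilon=1/4$ gives the theorem.

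I do not anticipate a real obstacle: the only care needed is the bookkeeping of which of the three terms in the Szemer\'edi--Trotter estimate dominates, together with the (automatic) point that $A\subset\mathbb{R}$ puts us exactly in the planar setting where that theorem applies. It is worth noting that the original proof of Erd\H{o}s and Szemer\'edi was purely combinatorial and produced only an unspecified tiny $\epsilon$; the route above, at the cost of invoking the nontrivial incidence bound, is both cleaner and quantitatively much stronger.
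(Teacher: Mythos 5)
Your argument is correct: this is Elekes's incidence-geometric proof of the sum--product bound, and it does establish the stated theorem (indeed with the explicit exponent $\epsilon=1/4$). The paper itself gives no proof here --- the Erd\H{o}s--Szemer\'edi theorem is quoted as background with a citation to the original 1983 paper, whose argument is purely combinatorial and yields only an unspecified small $\epsilon$ --- so your route is genuinely different from the cited one, and it is the more natural one in the context of this memoir, since it runs the deduction through the Szemer\'edi--Trotter theorem stated a few lines earlier and thereby illustrates exactly the incidence-geometry/sum--product link the chapter is describing. The bookkeeping of the three Szemer\'edi--Trotter terms is handled correctly: the $|L|=|A|^2$ term never dominates $|A|^3$, the $|P|$ term only gives a better exponent, and the main term yields $|A+A|\,|A\cdot A|\gtrsim |A|^{5/2}$. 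One small point to patch: the claim that distinct pairs $(a,b)$ give distinct lines, and that each line is $|A|$-rich, requires $a\neq 0$ (if $0\in A$ all lines $\ell_{0,b}$ collapse to $y=0$); discard $0$ from $A$ at the outset, which changes $|A|$ by at most one and only decreases $|A+A|$ and $|A\cdot A|$, so the bound is unaffected. With that remark your proof is complete, and quantitatively stronger than the statement being proved (the paper's ``$\geq\mathcal{O}(\cdot)$'' should of course be read as a lower bound $\gtrsim$).
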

The best known result is $\mathcal{O}(|A|^{4/3})$ by Solymosi in 2009 ~\cite{solymosi2009bounding}.

There have been a lot of structure theorems relating to sum-product problem. For instance, Freiman's Theorem states that if $A$ has small sumset (of linear size), then it is contained in a generalized arithmetic progression.

\begin{definition}
Let $d$ and $n_{1},...n_{d}$ be positive integers and $\Delta_{1},...,\Delta_{d}$ arbitrary real or complex numbers. A set $G$ is a \emph{generalized arithmetic progression}( ``arithmetic GP" for short) of dimension $d$ and size $n=n_{1}\cdot n_{2}\cdot ...\cdot n_{d}$ if 
\[G=\{\sum_{i=1}^{d} k_{i}\cdot\Delta_{i}; 0\leq k_{i} <n_{i} \text{ for } i=1,...,d\},\]
and these elements are all distinct.
\end{definition}

In what follows, $\mathcal{G}^{d,n}$ will denote the class of arithmetic GP's of dimension \emph{not exceeding} $d$ and size \emph{at most} $n$.

\begin{theorem}(Freiman's Theorem ~\cite[Freiman]{freaeiman1973foundations} ~\cite[Ruzsa]{ruzsa1992arithmetical}~\cite{ruzsa1994generalized})
If $|X|, |Y|\geq n$ and $|X+Y|\leq Cn$ then $X\cup Y$ is contained in an arithmetic GP $G\in \mathcal{G}^{d^{*}, C^{**}n}$, where $d^{*}=d^{*}(C)$ and $C^{*}=C^{*}(C)$ do not depend on $n$.
\end{theorem}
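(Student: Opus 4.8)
\medskip
\noindent\textit{Sketch of proof.}
The plan is to follow Ruzsa's proof of Freiman's theorem: symmetrize to a single set $A$ with small doubling, locate a large \emph{proper} generalized arithmetic progression inside a small iterated sumset of $A$, and then cover $A$ by boundedly many translates of it. For the first step, since $X,Y\neq\emptyset$, translating by an element of $Y$ gives $|X|\le|X+Y|\le Cn$, and symmetrically $|Y|\le Cn$, so $n\le|X|,|Y|\le Cn$; by the Pl\"unnecke--Ruzsa inequality together with the Ruzsa triangle inequality one then gets $|X+X|,|Y+Y|=O(C^{2})n$, so that $A:=X\cup Y$ satisfies $n\le|A|\le 2Cn$ and $|A+A|\le K_{0}|A|$ with $K_{0}=O(C^{2})$. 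Applying Pl\"unnecke--Ruzsa again yields $|kA-\ell A|\le K^{O(k+\ell)}|A|$ for all $k,\ell\ge0$, where $K=K(C)$. It therefore suffices to prove that any $A\subseteq\Z$ with $|A+A|\le K|A|$ lies in an arithmetic GP of dimension and size bounded in terms of $K$, the containment $X\cup Y\subseteq G$ and the dependence of $d^{*},C^{**}$ on $C$ being then immediate.

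For the main step I would first invoke Ruzsa's modelling lemma: since $|8A-8A|\le K^{O(1)}|A|$, there is a prime $N\le K^{O(1)}|A|$ and a subset $A'\subseteq A$ with $|A'|\ge|A|/8$ that is Freiman $8$-isomorphic to a set $\bar A\subseteq\Z/N\Z$ of density $\gtrsim K^{-O(1)}$. Bogolyubov's lemma (or its Chang refinement) then gives a Bohr set $B(\Gamma,\rho)\subseteq 2\bar A-2\bar A$ with $|\Gamma|$ and $1/\rho$ bounded in terms of $K$, and the geometry of numbers --- Minkowski's second theorem applied to the lattice determined by the characters in $\Gamma$ --- produces a \emph{proper} arithmetic GP $Q\subseteq 2\bar A-2\bar A$ of dimension $\le|\Gamma|$ and size $\gtrsim_{K}N$. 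Pulling $Q$ back through the Freiman $8$-isomorphism, which is injective on the four-fold sumset $2\bar A-2\bar A$, yields a proper GAP $Q'\subseteq 2A-2A$ with $\dim Q'\le d^{*}(K)$ and $|Q'|\gtrsim_{K}|A|$.

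Finally the Ruzsa covering lemma applies: because $|A+Q'|\le|3A-2A|\le K^{O(1)}|A|\le K^{O(1)}|Q'|$, there is $T\subseteq\Z$ with $|T|\le K^{O(1)}$ and $A\subseteq T+(Q'-Q')$. Here $Q'-Q'$ lies in a GAP of the same dimension and size $\le 2^{\dim Q'}|Q'|$, and adjoining the $|T|$ points of $T$ costs at most $|T|-1$ further dimensions and a factor $2^{|T|-1}$ in size; after the standard normalization that replaces a possibly improper GAP by a proper one of no larger dimension at the cost of $O_{d}(1)$ in size, we obtain a proper arithmetic GP of dimension $\le d^{*}(C)$ and size $O_{C}(|A|)=O_{C}(n)$ that contains $A\supseteq X\cup Y$, as required.

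The step I expect to be the real obstacle is the passage from small doubling to ``contains a large proper GAP'': the modelling lemma must be run so that $N$ remains $O_{K}(|A|)$ while $|A'|$ drops by only a bounded factor, and Bogolyubov's lemma has to be combined with the geometry-of-numbers argument in a way that guarantees the output GAP is genuinely \emph{proper} and of size comparable to $N$ --- properness being precisely what the concluding covering argument needs. Everything else is bookkeeping with the Pl\"unnecke--Ruzsa inequalities and the Ruzsa covering lemma.
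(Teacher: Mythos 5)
The paper does not prove this statement at all: Freiman's theorem is quoted in the Preliminaries purely as background, with citations to Freiman and Ruzsa, so there is no in-paper argument to compare yours against. Judged on its own terms, your sketch is the standard modern (Ruzsa-style) proof --- symmetrization via Pl\"unnecke--Ruzsa and the triangle inequality to reduce to a single set $A=X\cup Y$ of small doubling, Ruzsa's modelling lemma into $\Z/N\Z$ with $N=O_{K}(|A|)$, Bogolyubov/Chang plus Minkowski's second theorem to find a large proper GAP inside $2\bar A-2\bar A$, pull-back through the Freiman isomorphism (which, restricted to $2A'-2A'$, is a Freiman $2$-isomorphism and hence preserves GAP structure and properness), and finally the Ruzsa covering lemma with the usual bookkeeping to absorb $T$ and $Q'-Q'$ into one GAP of dimension and relative size bounded in terms of $C$ --- and the outline is sound, including the point you rightly flag that properness is what the covering step and the paper's definition of a generalized arithmetic progression (all elements distinct) actually require. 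The only quibbles are cosmetic: the exponent bookkeeping in the reduction (e.g.\ $|X+X|\le C^{2}|Y|\le C^{3}n$ rather than $O(C^{2})n$) is harmless since only boundedness in $C$ matters, and of course the sketch delegates all the real work to the named lemmas rather than proving them, which is appropriate for a classical cited theorem.
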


\subsection*{Relation with Elekes-Szab\'{o} Theorem}
Before considering an algebraic variety containing many points of a Cartesian product, Elekes first  studied a set of straight lines with small composition sets.  

We denote by $\mathcal{L}$ the set of non-constant real or complex linear functions $x \mapsto ax+b (a\neq 0)$. For two subset $\Phi, \Psi$ of $\mathcal{L}$, the composition set is defined by $\Phi\circ\Psi :=\{\phi\circ\psi; \phi\in\Phi, \psi\in\Psi\}$.

\begin{theorem} (Linear theorem for composition sets ~\cite{elekes1999sums})\label{thm: linear theorem for composition}
For all $c, C>0$ there exists a $c^{*}=c^{*}(c,C)>0$ with the following property.

Let $\Phi, \Psi \subset \mathcal{L}$ and $E\subset \Phi\times\Psi$ with $|\Phi|, |\Psi|\leq N$ and $|E|\geq cN^{2}$. Assume, moreover, that 
\[|\Phi\circ_{E}\Psi|\leq CN.\]
Then there are $\Phi^{*}\subset \Phi$ and $\Psi^{*}\subset\Psi$ for which $|(\Phi^{*}\times\Psi^{*})\cap |E|\geq c^{*}N^{2}$ and 

(i) either both $\Phi^{*}$ and $\Psi^{*}$ consist of functions whose graphs are all parallel (but the directions may be different for $\Phi^{*}$ and $\Psi^{*}$);

(ii) or both $\Phi^{*}$ and $\Psi^{*}$ consist of functions whose graphs all pass through a common point (which may be different for those in $\Phi^{*}$ and in $\Psi^{*}$).
\end{theorem}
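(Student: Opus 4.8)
\medskip

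\noindent\textit{Proof plan.} The first step would be to translate everything into the affine group. Writing $\phi\colon x\mapsto\alpha x+\beta$ and $\psi\colon x\mapsto ax+b$, one has $\phi\circ\psi\colon x\mapsto(\alpha a)x+(\alpha b+\beta)$, so composition is exactly the group law of $\operatorname{Aff}(1)=\{(\alpha,\beta):\alpha\neq 0\}$ with $(\alpha,\beta)\cdot(a,b)=(\alpha a,\alpha b+\beta)$, and $\mathcal{L}\cong\operatorname{Aff}(1)$ via $\phi\leftrightarrow(\alpha_{\phi},\beta_{\phi})$. In these coordinates both conclusions say ``the parameter points are collinear'': a family of graphs is mutually parallel iff its parameter points lie on a common \emph{vertical} line, and the graphs pass through one common point iff the parameter points lie on a common \emph{non-vertical} line (the line $\beta=-p\alpha+q$ being the pencil through $(p,q)$). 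So the target becomes: produce $\Phi^{*}\subset\Phi$, $\Psi^{*}\subset\Psi$ with $|(\Phi^{*}\times\Psi^{*})\cap E|\geq c^{*}N^{2}$ such that each of the two parameter-point sets is collinear, and then sort the vertical case into (i) and the non-vertical case into (ii).

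Next I would extract a robust core from the hypothesis. Since $|E|\geq cN^{2}$ and $|\Phi\circ_{E}\Psi|\leq CN$, Cauchy--Schwarz produces $\gtrsim_{c,C}N^{3}$ quadruples of $E$ with $\phi_{1}\circ\psi_{1}=\phi_{2}\circ\psi_{2}$, i.e.\ $\phi_{2}^{-1}\circ\phi_{1}=\psi_{2}\circ\psi_{1}^{-1}$. Projecting to slopes, the products $\alpha_{\phi}a_{\psi}$ over $E$ lie in a set of size $\leq CN$: this is exactly the ``few products on a dense bipartite graph'' situation that a Szemer\'edi--Trotter / Elekes-type incidence argument -- or, equivalently, a Balog--Szemer\'edi--Gowers argument in $\mathbb{C}^{*}$ followed by a Freiman-type theorem -- converts into multiplicative structure: on large subsets the slopes lie in a common generalized geometric progression, and matching the products pins this down further (ultimately to a common ratio).

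I would then split according to whether the slopes are concentrated. \emph{Case A: a positive proportion of $\Phi$ shares a single slope, and likewise for $\Psi$.} Pigeonhole onto a popular slope in each, intersect with $E$, and trim so that $\geq c^{*}N^{2}$ pairs survive; this already gives conclusion (i), since (i) constrains only slopes. \emph{Case B: the slopes are spread out.} The structure above places large $\Phi'\subset\Phi$, $\Psi'\subset\Psi$ whose slopes are essentially powers of a common ratio; re-imposing that the \emph{full} composition value -- slope and intercept together -- stays in a set of size $\leq CN$ then forces the intercepts of $\Phi'$ (resp.\ $\Psi'$) to be precisely those making the graphs concurrent, i.e.\ the parameter points land on a common non-vertical line -- conclusion (ii). One finishes by restricting $E$ to the product of the surviving sets and by checking that a ``mixed'' output (one family parallel, the other concurrent) is incompatible with $|(\Phi^{*}\times\Psi^{*})\cap E|\geq c^{*}N^{2}$ together with $|\Phi\circ_{E}\Psi|\leq CN$.

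The hard part is Case B -- the linear analogue of the algebraic Composition Lemma of Chapter~\ref{section: composition lemma}. Getting multiplicative structure out of the slopes alone is standard Szemer\'edi--Trotter-flavoured additive combinatorics, but that structure is much coarser than ``all graphs concurrent'': the real work is to feed the intercept condition back in and collapse the Freiman-type structure to exactly the pencils of case (ii) (or the parallel classes of case (i)). This collapse is where linearity, as opposed to a general algebraic relation, is essential, and where the lack of an inverse Szemer\'edi--Trotter theorem forces the detour through Balog--Szemer\'edi--Gowers and Freiman rather than a direct incidence count; propagating the constants so that $c^{*}=c^{*}(c,C)$ is bookkeeping.
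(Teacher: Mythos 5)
First, note that the paper does not prove this statement at all: Theorem~\ref{thm: linear theorem for composition} is quoted as background from \cite{elekes1999sums}, so your proposal can only be judged on its own merits, not against an in-paper argument.

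On those merits there is a genuine gap, and it sits exactly where you place ``the real work''. The reduction to $\operatorname{Aff}(1)$, the observation that (i)/(ii) mean collinearity of the parameter points (equivalently, containment in a coset of a one-parameter subgroup), and the Cauchy--Schwarz count of $\gtrsim_{c,C}N^{3}$ coincidences are all fine. But the step that carries the theorem --- ``re-imposing that the full composition value stays in a set of size $\leq CN$ then forces the intercepts of $\Phi'$ (resp.\ $\Psi'$) to be precisely those making the graphs concurrent'' --- is asserted, not argued. Balog--Szemer\'edi--Gowers plus a Freiman-type theorem in the multiplicative group only yields, after passing to further subsets, \emph{approximate} structure for the slopes (membership in a bounded-dimensional generalized geometric progression); it says nothing about the intercepts, and no mechanism is given for collapsing that approximate structure to the exact algebraic conclusion (all parameter points on one line, i.e.\ a genuine pencil). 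That collapse is the entire content of the theorem --- it is the linear analogue of the Composition Lemma of Chapter~\ref{section: composition lemma} --- and Elekes's actual argument obtains it by incidence geometry (Szemer\'edi--Trotter applied to the parameter points and the lines/one-parameter families they must populate), not by a Freiman detour; as written, your Case B is a restatement of the goal rather than a proof of it. A secondary, fixable flaw: in Case A, pigeonholing a popular slope in $\Phi$ and, separately, a popular slope in $\Psi$ does not guarantee that the two popular fibres carry $\geq c^{*}N^{2}$ edges of $E$ between them (they could carry none); you must pigeonhole over slope \emph{pairs} weighted by $E$, and the resulting dichotomy is ``many $E$-edges concentrate on few slope pairs'' versus ``they spread out'', which is not the same as ``a positive proportion of $\Phi$ shares a single slope''. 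The concluding check that a mixed outcome (one family parallel, the other concurrent) contradicts $|\Phi\circ_{E}\Psi|\leq CN$ is correct in spirit, but it presupposes the nondegenerate pencil structure that the missing step was supposed to deliver.
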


Apply Theorem~\ref{thm: linear theorem for composition} Elekes proved a generalization of Freiman's Theorem.

Let $G\subset \mathbb{C}$ be an arithmetic GP, $\Phi,\Psi\subset\mathcal{L}$ and $C$ a positive integer. We say that the pair $(\Phi,\Psi)$ is an arithmetic GP-type structure based upon $G$ with $C$ slopes if there are non-zero complex numbers $s_{1}, s_{2},...,s_{C}$ such that 
\[\Phi^{-1}\cup\Psi=\{x\mapsto s_{i}x+g; 1\leq i\leq C \text{ and } g\in G\}.\]
\begin{theorem}(Elekes\cite{elekes1998linear}) For every $C>0$ there are $C^{*}=C^{*}(C)>0$, $C^{**}=C^{**}(C)>0$ and $d^{*}=d^{*}(C)>0$ with the following property. If $\Phi, \Psi \subset \mathcal{L}$ with $|\Phi|, |\Psi|\geq N$ and 
\[|\Phi\circ\Psi|\leq CN\]
then $(\Phi, \Psi)$ is contained in an arithmetic or in a geometric $GP$-type structure with $\leq C^{*}$ slopes or bunches, respectively, based upon an arithmetic or geometric $G\in \mathcal{G}^{d^{*}, C^{**}N}$.
\end{theorem}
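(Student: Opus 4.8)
\medskip
\noindent\emph{Proof sketch.}
The plan is to apply Theorem~\ref{thm: linear theorem for composition} to the full product $E=\Phi\times\Psi$, translate the geometric dichotomy it returns into a small-doubling statement — additive in the parallel case, multiplicative in the concurrent case — and then finish with Freiman's Theorem (and its multiplicative analogue).

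First I would invoke Theorem~\ref{thm: linear theorem for composition} with $c=1$, with the given $C$, and with $E=\Phi\times\Psi$, so that $|E|=|\Phi||\Psi|\geq N^{2}$ and $|\Phi\circ_{E}\Psi|=|\Phi\circ\Psi|\leq CN$. This produces $\Phi^{*}\subseteq\Phi$ and $\Psi^{*}\subseteq\Psi$ with $|\Phi^{*}\times\Psi^{*}|\geq c^{*}N^{2}$, whence $|\Phi^{*}|,|\Psi^{*}|\geq c^{*}N$ since each factor is at most $N$; and moreover either (i) all graphs in $\Phi^{*}$ have a single slope $s$ and all graphs in $\Psi^{*}$ a single slope $t$, or (ii) the graphs in $\Phi^{*}$ are concurrent at a point $P$ and those in $\Psi^{*}$ at a point $Q$.

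In case (i) write $\Phi^{*}=\{x\mapsto sx+b:\,b\in B\}$ and $\Psi^{*}=\{x\mapsto tx+b':\,b'\in B'\}$. Since a line of fixed slope is determined by its constant term, $|B|=|\Phi^{*}|\geq c^{*}N$ and $|B'|=|\Psi^{*}|\geq c^{*}N$. The composition rule $(sx+b)\circ(tx+b')=st\,x+(b+sb')$ shows that $\Phi^{*}\circ\Psi^{*}$ consists of the lines of slope $st$ with constant terms exactly $B+sB'$, so $|B+sB'|\leq|\Phi\circ\Psi|\leq CN$. As dilation by $s\neq0$ preserves cardinalities, Freiman's Theorem (with doubling constant $\mathcal{O}_{C}(1)$) gives an arithmetic GP $G\in\mathcal{G}^{d^{*},\,C^{**}N}$ with $B\cup sB'\subseteq G$. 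Replacing $G$ by $\tfrac1s\bigl(G\cup(-G)\bigr)$, which is again an arithmetic GP of the same dimension and of size $\lesssim_{C}N$, one gets the constant terms of $\Phi^{*-1}$ and of $\Psi^{*}$ inside one arithmetic GP while the only slopes present are $1/s$ and $t$; that is, $(\Phi^{*},\Psi^{*})$ sits inside an arithmetic GP-type structure with at most $2$ slopes. Case (ii) runs along the same lines after observing that two distinct concurrent lines have distinct slopes, so $\Phi^{*}$ (resp.\ $\Psi^{*}$) is faithfully parametrised by its set of slopes $S$ (resp.\ $S'$); composition of concurrent lines multiplies slopes, hence $S\cdot S'$ is the slope set of $\Phi^{*}\circ\Psi^{*}$ and $|S\cdot S'|\leq CN$ while $|S|,|S'|\geq c^{*}N$. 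The multiplicative form of Freiman's Theorem — apply the additive one to logarithms, treating modulus and argument separately over $\mathbb{C}$ — then yields a geometric GP containing $S$ and $\alpha S'$ for a suitable $\alpha$, and since a concurrent line is determined by its slope together with the (fixed) base point, this upgrades to a geometric GP-type structure with a bounded number of bunches.

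The real work, and the step I expect to be the main obstacle, is to pass from the large subsets $\Phi^{*},\Psi^{*}$ to all of $\Phi,\Psi$ and to collapse everything onto a single GP with one slope/bunch count $C^{*}=C^{*}(C)$. The mechanism is a bootstrapping argument: once $\Psi^{*}$ is pinned inside a bounded-dimensional GP of size $\lesssim_{C}N$, any single $\phi\in\Phi$ satisfies $|\{\phi\}\circ\Psi^{*}|\leq CN$, and composing one affine map with such a rigid family constrains $\phi$ to have one of $\mathcal{O}_{C}(1)$ possible slopes and a constant term lying, up to a bounded dilation, in the same GP; the symmetric statement controls $\Psi$. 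One then iterates Theorem~\ref{thm: linear theorem for composition} on the part of $\Phi\times\Psi$ not yet covered and merges the finitely many GP's produced — their union, after a common rescaling, lies in one GP of dimension $\mathcal{O}_{C}(1)$ and size $\mathcal{O}_{C}(N)$, and the finitely many slope lists coalesce into one of length $\mathcal{O}_{C}(1)$. Throughout, one must track how $d^{*}$, $C^{*}$ and $C^{**}$ depend on $C$ (and on the constant $c^{*}(1,C)$ coming from Theorem~\ref{thm: linear theorem for composition}) through each application of Freiman's Theorem; this bookkeeping, rather than any single idea, is where the care is needed.
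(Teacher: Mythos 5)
The paper does not actually prove this statement --- it is quoted from Elekes \cite{elekes1998linear}, with only the indication that it follows by applying Theorem~\ref{thm: linear theorem for composition}; so your first stage (run the composition theorem on $E=\Phi\times\Psi$, read the parallel/concurrent dichotomy as a small sumset $B+sB'$ or small product set $S\cdot S'$, and invoke Freiman) is the intended route, and your bookkeeping there, including the role of $\Phi^{*-1}$ and the multiplication of slopes in the concurrent case, is essentially right (modulo the minor point that the hypotheses here give $|\Phi|,|\Psi|\geq N$ while Theorem~\ref{thm: linear theorem for composition} assumes $\leq N$; this is repaired by noting $|\Phi|,|\Psi|\leq|\Phi\circ\Psi|\leq CN$ and renormalising, and modulo the fact that the multiplicative Freiman statement over $\mathbb{C}^{*}$ needs the abelian-group version because of the circle factor, not just ``take logarithms'').

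The genuine gap is in your second stage, which is where the whole content of Elekes's theorem beyond ``composition theorem $+$ Freiman'' lies. First, the constraint you invoke, $|\{\phi\}\circ\Psi^{*}|\leq CN$, is vacuous: composition with a fixed $\phi$ is a bijection, so $|\phi\circ\Psi^{*}|=|\Psi^{*}|$ no matter what $\phi$ is. The usable constraint is the containment $\phi\circ\Psi^{*}\subseteq\Phi\circ\Psi$, and to exploit it you must first transfer the GP structure from the subset $\Psi^{*}$ to the composition set $\Phi\circ\Psi$ itself (a Pl\"unnecke--Ruzsa-type covering step you never perform), and then prove a rigidity lemma: an affine map carrying a positive proportion of a bounded-dimensional GP of size $O(N)$ into another such GP must have one of $O_{C}(1)$ slopes and a translation lying in a bounded enlargement of the GP. You assert this rigidity but do not prove it, and it is not a triviality. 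Second, iterating Theorem~\ref{thm: linear theorem for composition} on the uncovered part of $\Phi\times\Psi$ cannot close the argument, because that theorem needs $\geq cN^{2}$ surviving pairs; the final sparse remainder (which may still contain many elements of $\Phi$ or $\Psi$ occurring in few pairs) is never treated, whereas the theorem demands that \emph{every} element of $\Phi^{-1}\cup\Psi$ lie in the structure --- so the per-element rigidity argument is unavoidable, not optional bookkeeping. Third, the merging step is not automatic: two arithmetic GPs of size $O(N)$ with incommensurable generators are not jointly contained in any GP of bounded dimension and size $O(N)$, and different applications of the composition theorem may return different cases (parallel on one piece, concurrent on another), so reconciling the arithmetic and geometric outcomes into a single structure with one bound $C^{*}$ itself requires an argument using the common set $\Phi\circ\Psi$. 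As written, the proposal proves containment of a dense subpair $(\Phi^{*},\Psi^{*})$ only, which is strictly weaker than the stated theorem.
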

This theorem is a generalization of Freiman's Theorem by taking $\Phi=\Psi=\{x+a_{i}; a_{i}\in X\}$ .

Another fruit of Theorem~\ref{thm: linear theorem for composition} is the Elekes-R\'{o}nyai Theorem. 
\begin{theorem}(Elekes-R\'{o}nyai Theorem~\cite{elekes2000combinatorial})
For every $C\geq 1$ and positive integer $d$,  there is an $n_{0}=n_{0}(C,d)$ with the following property.
If $F\in \mathbb{R}(x,y)$ of degree $d$, and there are $X, Y\in \mathbb{R}$ with $|X|=|Y|=n$ such that 
\[|F(X\times Y)|\leq Cn.\]
Then there are rational functions $f, g, h\in \mathbb{R}(z)$ for which one of (1)-(3) below is satisfied:
\begin{enumerate}
	\item $F(x,y)=f(g(x)+h(y))$;
	\item $F(x,y)=f(g(x)\cdot h(y))$;
	\item $F(x,y)=f\big(\frac{g(x)+h(y)}{1-g(x)\cdot h(y)}\big)$.
\end{enumerate}
Moreover, if $F\in \mathbb{R}[x,y]$ is a polynomial then one of the first two possibilities (i) or (ii) holds with some polynomials $f, g, h \in \mathbb{R}[z]$.
\end{theorem}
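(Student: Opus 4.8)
The plan is to attach to $F$ a family of bounded-degree plane curves, use an incidence bound to show this family has a small composition set, and then feed the result into the linear composition theorem, Theorem~\ref{thm: linear theorem for composition}, to read off the three normal forms. First I would normalize the configuration. Since $F$ has degree $d$, for all but $\mathcal{O}_{d}(1)$ values of $x$ the slice $\phi_{x}:=F(x,\cdot)$ is a rational function of $y$ of degree $\leq d$, and for all but $\mathcal{O}_{d}(1)$ pairs these slices are distinct as rational functions. Discarding the exceptional rows and columns together with a bounded number of points lying near poles or critical values, I pass to subsets $X'\subseteq X$ and $Y'\subseteq Y$ with $|X'|,|Y'|\geq n/2$ on which every $\phi_{x}$ is $\mathcal{O}_{d}(1)$-to-one on $Y'$ and takes all its values in the fixed set $S:=F(X\times Y)$, of size $\leq Cn$. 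This step is routine but must be carried out with care because rational functions have poles.

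Next, for $x\neq x'$ in $X'$ I form the algebraic curve
\[
\Gamma_{x,x'}:=\overline{\{(F(x,t),F(x',t)):t\in\mathbb{C}\}}\subseteq\mathbb{C}^{2},
\]
of degree $\mathcal{O}_{d}(1)$; it is the closure of the graph of the bounded-degree rational correspondence $w_{x,x'}:=\phi_{x'}\circ\phi_{x}^{-1}$. For every $y\in Y'$ the point $(F(x,y),F(x',y))$ lies on $\Gamma_{x,x'}$ and inside $S\times S$, so each of these $\sim n^{2}$ curves is \emph{rich}: it meets the grid $S\times S$ --- which has $\mathcal{O}(n^{2})$ points --- in $\geq cn$ points. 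A Szemer\'{e}di--Trotter type incidence bound for bounded-degree curves now forces almost all of the $\Gamma_{x,x'}$ to coincide: if $\gg n$ of them were distinct the number of point--curve incidences would be at least $cn\cdot n^{2}$, beating the incidence bound of $\mathcal{O}(n^{8/3})$. Hence only $\mathcal{O}(n)$ of the curves, equivalently only $\mathcal{O}(n)$ of the correspondences $w_{x,x'}$, are distinct. Fixing a generic $x_{0}\in X'$ and writing $v_{x}:=\phi_{x}\circ\phi_{x_{0}}^{-1}$, so that $w_{x,x'}=v_{x'}\circ v_{x}^{-1}$, this says exactly that the family $V:=\{v_{x}:x\in X'\}$ of degree-$\mathcal{O}_{d}(1)$ rational self-maps of $\mathbb{P}^{1}$, which has $\sim n$ elements, satisfies $|V\circ V^{-1}|=\mathcal{O}(n)=\mathcal{O}(|V|)$; by symmetry the column slices $\psi_{y}:=F(\cdot,y)$ furnish such a family in the $x$-variable as well.

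The heart of the proof --- and the step I expect to be the main obstacle --- is to upgrade this small-composition estimate to the structural conclusion. The correspondences $w_{x,x'}$ have bounded degree, take only $\mathcal{O}(n)$ values, and form a composition-closed family ($w_{x,x''}=w_{x,x'}\circ w_{x',x''}$); a Pl\"{u}nnecke--Ruzsa type argument in this bounded-complexity setting should force the entire family to be \emph{simultaneously conjugate}, by a single rational function $\pi$, into the group of M\"{o}bius transformations. Producing $\pi$, bounding its degree and poles, separating genuine rational maps from the spurious components of the $\Gamma_{x,x'}$, and verifying that the induced composition law on these correspondences is well defined and associative --- this is the technical crux, and it is where essentially all of the algebraic geometry (including the classification of connected one-dimensional subgroups of $\PGL_{2}$) is concentrated. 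Once $\{w_{x,x'}\}$ has been put into M\"{o}bius form and then, after one further conjugation, into affine form, it becomes a composition-closed family of linear maps, which one writes as $\ell_{x,x'}=m_{x'}\circ m_{x}^{-1}$ for a family $M=\{m_{x}\}$ of $\sim n$ linear functions with $|M\circ M^{-1}|=\mathcal{O}(n)$. Theorem~\ref{thm: linear theorem for composition} then applies to $M$ (with $\Phi=M$, $\Psi=M^{-1}$): in the parallel case $m_{x}(z)=z+t_{x}$ and one unwinds $\phi_{x}=w_{x,x_{0}}\circ\phi_{x_{0}}$ to get $F(x,y)=f(g(x)+h(y))$; in the concurrent case $m_{x}(z)=s_{x}z$ (after sending the common point to the origin) and one similarly gets $F(x,y)=f(g(x)\cdot h(y))$; here $f$ is a M\"{o}bius modification of $\pi$, $h$ is a fixed rational function built from $\phi_{x_{0}}$, and $g$ records the parameter $t_{x}$ or $s_{x}$. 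Since this identity holds on a Zariski-dense subset of $\mathbb{C}^{2}$, it holds for $F$ as a rational function.

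Two points complete the proof. Over $\mathbb{R}$ a connected one-parameter M\"{o}bius group need not be conjugate into the affine part: besides the translation and dilation groups there is the non-split rotation group, fixing a conjugate pair of non-real points, and when $\{w_{x,x'}\}$ lands there one traces through the rotation addition law, written in the half-angle parameter, to obtain precisely $F(x,y)=f\big((g(x)+h(y))/(1-g(x)h(y))\big)$ --- this is why the real statement has three cases and not two. Finally, if $F$ is a polynomial, the rotation case and any poles of $f,g,h$ are excluded by inspecting the reconstructed identity near infinity (a polynomial identity cannot reproduce the tangent-type law, and the recovered $g,h$ can be taken polynomial), leaving only possibilities (i) and (ii) with polynomial $f,g,h$.
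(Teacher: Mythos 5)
The paper itself does not prove this statement: it quotes the Elekes--R\'{o}nyai theorem from \cite{elekes2000combinatorial} as background (as an application of Theorem~\ref{thm: linear theorem for composition}), so there is no in-paper proof to compare against; I can only assess your outline on its own terms. Your first two stages are essentially the genuine Elekes--R\'{o}nyai strategy and are sound in outline: the rich curves $\Gamma_{x,x'}$ inside the grid $S\times S$, a Pach--Sharir type bound forcing only $\mathcal{O}(n)$ distinct curves, and then a small composition set for the induced correspondences. (One quantitative slip: the incidence count must be run over \emph{distinct} curves, giving $c\,n\,m\lesssim n^{4/3}m^{2/3}+n^{2}+m$ and hence $m=\mathcal{O}(n)$; your ``at least $cn\cdot n^{2}$ incidences'' double-counts coinciding curves, and you must also check the two-degrees-of-freedom/multiplicity hypotheses, e.g.\ that distinct $\Gamma_{x,x'}$ share no component.)

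The genuine gap is the step you yourself flag as the ``technical crux'': passing from ``the bounded-degree correspondences $w_{x,x'}$ take only $\mathcal{O}(n)$ values and satisfy $w_{x,x''}=w_{x,x'}\circ w_{x',x''}$'' to ``they are simultaneously conjugate by a single rational map $\pi$ into a one-parameter subgroup of $\PGL_{2}$.'' Asserting that ``a Pl\"{u}nnecke--Ruzsa type argument should force'' this is not a proof and, more to the point, it is not an additive-combinatorics statement at all: it is exactly the group-recognition content that in this memoir is carried by the Composition Lemma of Chapter~\ref{section: composition lemma} (equivalently, the algebraic lemmas of Elekes--R\'{o}nyai that analyse when two parametrized curves $\{(\phi_{x}(t),\phi_{x'}(t))\}$ coincide and extract a common reparametrization, or Hrushovski's group configuration). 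One must also handle the fact that the $w_{x,x'}$ are multi-valued correspondences rather than maps, so even the composition identity only holds componentwise and needs the irreducibility/Galois-theoretic bookkeeping done in Step one of Chapter~\ref{section: composition lemma}; and before invoking Theorem~\ref{thm: linear theorem for composition} you must verify its bipartite richness hypothesis $|E|\geq cN^{2}$ for the linearized family, not merely that the composition set is small. Until the conjugation-into-$\PGL_{2}$ step is actually proved (with degree bounds, and with the argument that rules out an elliptic-type structure so that only the additive, multiplicative and rotation subgroups survive --- which is what produces cases (1)--(3) and, for polynomial $F$, excludes (3)), the proposal does not constitute a proof of the theorem.
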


\chapter{Reproof of the Elekes-Szab\'{o} theorem}\label{section: elekes-szabo}
In this Chapter, we reprove the Elekes-Szab\'{o} theorem following their original proof focusing on the quantitative improvements.
\begin{theorem}(Elekes-Szab\'{o} Theorem~\cite{elekes-szabo})\label{thm: elekes-szabo}
For any small $\epsilon >0$, and any positive integer $d$ there exist positive constants $\lambda =\lambda (d)$ and  $n_{0} = n_{0}(d, \epsilon)$ with the following property.

If $V \subset  \mathbb{C}^{3}$ is a two-dimensional irreducible algebraic surface of degree $d$ then the following are equivalent:

(a) For at least one $n>n_{0}$ there exist $X, Y, Z \subset \mathbb{C}$ such that $|X|=|Y|=|Z|=n$ and 
\[V\cap (X\times Y\times Z)| \geq n^{2-\eta}, \ \ \eta=\tfrac{1}{6}-\epsilon ;\]

(b) $V$ is either a cylinder over a curve $F(x,y)=0$ or $F(x,z)=0$ or $F(y,z)=0$ or, otherwise, there exist a one-dimensional connected algebraic group $\mathcal{G}$ and analytic multi-functions $f, g, h : \mathcal{G} \rightarrow \mathbb{C}$ of complexity bounded by $\lambda(d)$, such that their inverses are also analytic multi-functions of complexity bounded by $\lambda(d)$, and $V$ is the closure of a component of the $f\times g\times h$-image of the special subvariety: \[\mathcal{G}_{sp}: =\{(x,y,z); x\oplus y\oplus z =0 \in \mathcal{G}\},\]

(c) Let $D\subset \mathbb{C}$ denote the open unit disc. Then either $V$ contains a cylinder over a curve $F(x,y)=0$ or $F(x,z)=0$ or $F(y,z)=0$ or, otherwise, there are one-to-one analytic functions $f, g, h : D\rightarrow \mathbb{C}$ with analytic inverses such that : 
\[V\supset \big\{ \big( f(x), g(y), h(z)\big) \in \mathbb{C}^{3}; x, y, z \in D, x+y+z=0\big\} .\]
\end{theorem}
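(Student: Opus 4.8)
The plan is to establish the cycle of implications $(a)\Rightarrow(c)\Rightarrow(b)\Rightarrow(a)$, in which $(b)\Rightarrow(a)$ and $(c)\Rightarrow(b)$ are the routine directions and $(a)\Rightarrow(c)$ carries essentially all of the content.

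For $(b)\Rightarrow(a)$ I would exhibit the grid by hand. If $V$ contains a cylinder over a coordinate curve, say $F(x,y)=0$, I parametrise a branch, take $n$ generic points on it, let $X,Y$ be the coordinate sets thus produced and let $Z$ be any $n$-element set; then $|V\cap(X\times Y\times Z)|\ge n\cdot n\ge n^{2-\eta}$. Otherwise, every one-dimensional connected algebraic group over $\mathbb{C}$ is commutative and is locally analytically isomorphic to $(\mathbb{C},+)$, so I choose inside a chart of $\mathcal{G}$ a progression $P$ of size $\asymp n$ with $\#\{(a,b,c)\in P\times P\times P':a\oplus b\oplus c=0\}\asymp n^2$ --- an ordinary arithmetic progression when $\mathcal{G}=\mathbb{G}_a$, a geometric one when $\mathcal{G}=\mathbb{G}_m$, and $\{k\omega:0\le k<n\}$ for a point $\omega$ of infinite order when $\mathcal{G}$ is an elliptic curve --- and push it forward through $f,g,h$. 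Because $f,g,h$ have analytic inverses their branches are injective, so $|X|,|Y|,|Z|\asymp n$ and $V$ acquires $\asymp n^2$ of the $n^3$ grid points; truncating to exact cardinality $n$ loses only a constant factor, which is harmless once $n>n_0$. For $(c)\Rightarrow(b)$ I would globalise: the analytic germ $\{(f(x),g(y),h(z)):x+y+z=0\}$ lies on the irreducible degree-$d$ surface $V$, so $f,g,h$ continue analytically along $V$ with monodromy bounded in terms of $d$ alone; the ambient group is recovered as the unique one-dimensional connected algebraic group whose analytic uniformisation matches $x+y+z=0$ at the base point, and $V$ is then forced to be the closure of a component of $(f\times g\times h)(\mathcal{G}_{sp})$, with all degrees and branch numbers controlled by a function $\lambda(d)$ of $\deg V$ only.

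The substance of the theorem is $(a)\Rightarrow(c)$. Assume $V$ is not a cylinder over any coordinate curve; then each coordinate projection $V\to\mathbb{C}^2$ is dominant and generically finite, so for generic $z$ the fibre $V\cap\{Z=z\}$ is a plane curve $\gamma_z$ of degree $\le d$, and $\{\gamma_z\}$ is an algebraic family with at most two parameters, as are the analogous families $\{\gamma_x\}$ and $\{\gamma_y\}$. The $\ge n^{2-\eta}$ points of $V\cap(X\times Y\times Z)$ are precisely that many incidences between the $n^2$ points of $X\times Y$ and the $n$ curves $\{\gamma_z\}_{z\in Z}$. Feeding this into the complex point--curve incidence estimate of Chapter~\ref{section: complex} (and, for the higher-dimensional generalisation, the estimate of Chapter~\ref{section: incidence theorem}) shows that for $\eta<\tfrac{1}{6}-\epsilon$ a family of bounded degree and bounded parameter count cannot be this incidence-rich unless it degenerates; a Cauchy--Schwarz and double-counting step then forces a positive proportion of the pairs $(\gamma_z,\gamma_{z'})$ to meet in many common points of the grid. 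In web-theoretic language, the three fibre families cut out a $3$-web on an open piece of $V$, and this richness is exactly the statement that the web has maximal rank.

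The genuinely hard step, which I expect to be the main obstacle, is the \emph{Composition Lemma} of Chapter~\ref{section: composition lemma}: this is the algebraic-geometry input that converts the combinatorial coincidence structure into a group. Morally it is the algebraic avatar of the Graf--Sauer and Blaschke--Bol theorem that a maximal-rank planar $3$-web is locally equivalent to the web cut out by $x+y+z=0$; concretely, the coincidences among the curves $\gamma_z$ generate a one-parameter family of analytic symmetries of $V$, these symmetries close up into a one-dimensional connected commutative algebraic group $\mathcal{G}\in\{\mathbb{G}_a,\mathbb{G}_m,\text{elliptic}\}$, and the resulting orbit structure yields one-to-one analytic functions $f,g,h$ on a disc $D$ with analytic inverses such that $V\supset\{(f(x),g(y),h(z)):x,y,z\in D,\ x+y+z=0\}$, which is $(c)$. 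The delicate points are to show that the symmetry family is algebraic rather than merely analytic, to keep every degree, branch number and disc radius bounded purely in terms of $\deg V$ so that $\lambda$ and $n_0$ depend only on $d$ uniformly in $n$, and to dispose of the cylinder alternatives cleanly; the quantitative value $\eta=\tfrac{1}{6}-\epsilon$ is then inherited wholesale from the exponent in the complex incidence theorem used above.
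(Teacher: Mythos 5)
Your core mechanism for the hard implication is the same as the paper's: view the fibres of $V$ over one coordinate as a one-parameter family of curves, double count, feed the count into the complex incidence bound of Chapter~\ref{section: complex}, and let the Composition Lemma of Chapter~\ref{section: composition lemma} convert the degenerate alternative into a one-dimensional algebraic group (the paper does exactly this, but packaged as the Main Theorem~\ref{thm: main theorem}, proving only $(a)\Rightarrow(b)$ and citing Elekes--Szab\'{o} for the remaining arrows). However, there are two genuine gaps in your organisation. First, you declare $(c)\Rightarrow(b)$ routine, to be done by ``analytic continuation along $V$ with bounded monodromy'' recovering the algebraic group; this is not a routine globalisation --- passing from a local analytic web structure on a disc to an algebraic group with all complexities bounded by $\lambda(d)$ is essentially the whole difficulty of the theorem (algebraicity of the local symmetries), and the standard way to close the cycle is the genuinely easy arrow $(c)\Rightarrow(a)$ (push arithmetic progressions in the disc through $f,g,h$), recovering $(b)$ only via $(a)\Rightarrow(b)$. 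As written, your cycle $(a)\Rightarrow(c)\Rightarrow(b)\Rightarrow(a)$ places the unproved content in the arrow you call routine.

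Second, inside $(a)\Rightarrow(c)$ the dichotomy is blurred in a way that matters. The incidence upper bound is not unconditional: in the paper it bounds the quadruple set $\mathcal{H}=\{(\mathbf{a}_i,\mathbf{a}_j,\mathbf{c}_l,\mathbf{c}_m):(\mathbf{a}_i,\mathbf{a}_j)\in F_{\mathbf{c}_m}^{-1}\circ F_{\mathbf{c}_l}\}$, i.e.\ incidences between points of $X\times X$ and the \emph{composition} curves parametrised by $Z\times Z$, and it is valid only in the non-degenerate case where that composition family is at least two-dimensional (so that two points determine $\mathcal{O}(1)$ curves). The degenerate alternative forced by Cauchy--Schwarz is therefore not ``many pairs $(\gamma_z,\gamma_{z'})$ meet in many grid points'' but rather that the composition family $\{F_{c'}^{-1}\circ F_{c}\}$ collapses to a one-dimensional family (many pairs $(c,c')$ yield the same curve); only then does the Composition Lemma apply. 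Moreover its output is the global algebraic conclusion of type $(b)$ (a connected one-dimensional group and multi-functions exhibiting $V$ as a component of the image of $\mathcal{G}_{sp}$), so to reach your target $(c)$ you still need the uniformisation step $(b)\Rightarrow(c)$ (analytic parametrisation of $\mathbb{G}_a$, $\mathbb{G}_m$ or an elliptic curve on a small disc); your sketch elides this by claiming the lemma directly produces one-to-one analytic $f,g,h$ on $D$.
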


\begin{remark}The constant $\eta=\tfrac{1}{6}-\epsilon$ comes from Theorem ~\ref{thm: complex} , and we can get $\tfrac{1}{22}-\epsilon$ directly from Elekes-Szab\'{o} paper.  
If $V \subset \mathbb{R}^{3}$, the constant  $\eta=\tfrac{1}{6}-\epsilon$ is derived from Elekes-Szab\'{o} paper, and we obtain $\eta=\tfrac{1}{6}$ with slightly altering the proof.
\end{remark}
We will establish a stronger Theorem ~\ref{thm: main theorem} which implies $(a)$ to $(b)$, for the whole proof of equivalence we refer to Elekes-Szab\'{o} paper \cite[Section 4.2.]{elekes-szabo}. 
\begin{definition}
A \emph{multi-function} $(F: A\rightarrow B)$ between two irreducible projective varieties $A$ and $B$ is a nonempty closed algebraic subset $F\subset A\times B$ such that the two projections of $F$ are generically finite and surjective. Then $A, B$ and $F$ necessarily have equal dimensions. If the projections $\pi_{A}: F\rightarrow A$ and $\pi_{B}: F\rightarrow B$ have degrees $\alpha$ and $\beta$, respectively, then we say that the degree of the multi-function $F$ is $max(\alpha, \beta)$. 
\begin{displaymath}
	\xymatrix{& F \ar@{^{(}->}[d] \ar[ddl]_{\pi_{A}}\ar[ddr]^{\pi_{B}}&\\ & A\times B\ar[dl]\ar[dr]&\\
			A& &B}
\end{displaymath}

\end{definition}

\begin{definition}
A \emph{generalized multi-function} $(F: A \rightarrow B)$ is an algebraic set $F$ together with morphisms $F \rightarrow A$ and $F \rightarrow B$ such that the closure of the image of $F$ in $A \times B$ is a multi-function.  It is called the multi-function represented by $F$.
\end{definition}

Given $(F: A \rightarrow B)$ and $(G: B \rightarrow C)$ two generalized multi-functions, then the fiber product $H=F \times_{B} G$ has natural projections to $A$ and $C$, and it is easy to see that $(H: A\rightarrow C)$ is a generalized multi-function representing the composition of $F$ and $G$.
\begin{displaymath}
	\xymatrix{& &F \times_{B} G \ar@{^{(}->}[rrr]\ar[dl]\ar[dr]\ar[drrr] & & & A\times B\times C\ar[d] \\
			&F\ar[dl]\ar[dr] &&G\ar[dl]\ar[dr] &&A\times C\\
			A&&B&&C}
\end{displaymath}
\begin{definition}
A \emph{family of multi-functions} $(F_{t} : A\rightarrow B, t\in T)$ parametrized by the irreducible variety $T$ is a closed algebraic subset $F\subset A\times B\times T$, such that the generic fiber $F_{t}\subset A\times B$ is a multi-function. 
\end{definition}

Here we only work with irreducible parameter spaces. And we can represent $T$ with its reduced scheme structure, hence $T$ is now an integral scheme (irreducible and reduced). We shall study the family of multi-function $F$ between $A$ and $B$ via $A\times B$'s Hilbert scheme $\mathcal{H}(A\times B)$. One can treat $\mathcal{H}(A\times B)$ as a collection of closed subvarieties of $A\times B$, and $\mathcal{H}(A\times B)$ itself has a scheme (algebraic variety) structure, such that every point of $\mathcal{H}(A\times B)$ represents a closed subvariety of $A\times B$. Here we only deal with bounded degree multi-functions, so we could restrict to Hilbert scheme of bounded degree. In this case $\mathcal{H}(A\times B)$ can be viewed as a finite dimensional variety. Moreover, Hilbert scheme $\mathcal{H}(A\times B)$ has the universal property: every family of multi-functions $F$ parametrized by an integral scheme $T$ induces a unique rational map $\phi_{F}$ from $T$ to $\mathcal{H}(A\times B)$ with the property that $\phi_{F}$ is defined on a dense open set $T'$ of $T$, it sends every $t\in T'$ to the point in $\mathcal{H}(A\times B)$ that represents $F_{t}$.  And $\phi_{F}$ does not depend on the choice of $T'$,  in general, one cannot extend it continuously to the whole $T$.
\begin{displaymath}
	\xymatrix{ F \ar@{^{(}->}[d]&\\
A\times B\times T \ar[d]&\\
T\ar[r]^{\exists ! \phi_{F}}& \mathcal{H}(A\times B)}
\end{displaymath}
\begin{definition}
We say that $F$ is a \emph{$k$-dimensional family} if $\phi_{F}(T)$ is $k$-dimensional. Moreover, $F$ is called \emph{equivalent to} another family $(\hat{F}_{u}: A\rightarrow B, u\in U)$ if the rational images of $T$ and $S$ in the Hilbert scheme have the same closure, i.e. if they parametrize essentially the same set of multi-functions.
\end{definition}

Let $R$ denote the closure of $\phi_{F}(T)$ in $\mathcal{H}(A\times B)$, and let $H$ denote the corresponding family of multi
-functions parameterized by $R$. Clearly $H$ is a family of multi-functions that is equivalent to $F$ and that parametrizes each multi-function only once. Moreover, if $\dim T=\dim R$, then $\phi_{F}$ is a multi-function.
\begin{definition}\label{def: common component}
 We say that $F$ and another family $(G_{s}: A\rightarrow B, s\in S) $ have a common component if there are families $(\hat{F}_{u} :A\rightarrow B, u\in U)$ and $(\hat{G}_{u}: A\rightarrow B, u\in U)$ equivalent to them, parametrized by the same $U$, such that for all $u \in U$ the algebraic subvarieties $\hat{F}_{u}$ and $\hat{G}_{u}$ of $A\times B$ have a common component.
\end{definition}
\begin{displaymath}
	\xymatrix{F\ar@{^{(}->}[d]&\hat{F}\ar@{^{(}->}[d]&&\hat{G}\ar@{^{(}->}[d]&G\ar@{^{(}->}[d]\\
		A\times B\times T \ar[d]& A\times B\times U \ar[d]&& A\times B\times U \ar[d] &A\times B\times S\ar[d]\\
		T\ar[drr]^{\simeq}& U\ar[dr] & &U\ar[dl]& S\ar[dll]_{\simeq}\\
		&&\mathcal{H}(A\times B)}
\end{displaymath}
One may ask what's the relation between $F$ and $G$ as a algebraic variety. We will analyze it in the range of Main Theorem. 
\begin{definition}
Let $\Gamma$ be a connected algebraic group acting on a variety $V$. Then the standard family of multi-functions corresponding to this group action is the family $(F_{\gamma}: V \rightarrow V, \gamma \in \Gamma)$ where $F_{\gamma}$ is the graph of the automorphism $\gamma$.  We say that a family $(G_{s}:A\rightarrow B, s\in S)$ is \emph{related to} the standard family $F$ along the multi-functions $(\alpha: V\rightarrow A)$ and $(\beta : V\rightarrow B)$ if $G$ has a common component with the family $(\beta\circ F_{\gamma} \circ \alpha^{-1} : A\rightarrow B, \gamma \in \Gamma)$.
\end{definition}
The multi-function family $(\beta\circ F\circ\alpha^{-1} : A\rightarrow B, \gamma \in \Gamma)$ can be viewed as multi-function image of $(\alpha \times \beta \times Id_{\Gamma}): V\times V\times \Gamma \rightarrow A\times B\times \Gamma$, we name it $H$. Then we go back to the Definition ~\ref{def: common component} that two families of multi-functions $G$ and $H$ have a common component.

Next technical lemma implies where the ``group" comes from. One may use it as a black box, or curious readers might go to Chapter ~\ref{section: composition lemma} for its proof.

\begin{lemma}{$\mathbf{(Composition}$ $\mathbf{ Lemma)}$}
Suppose there are two $k$-dimensional families of multi-functions, $(F_{t}: A \rightarrow B, t\in T)$ and $(G_{s} :B\rightarrow C, s\in S)$, such that the family of compositions $(G_{s}\circ F_{t}: A \rightarrow C, (t,s)\in T\times S)$ has a common component with a $k$-dimensional family. Then there is a $k$-dimensional connected algebraic group $\Gamma$ acting on a variety $V$, and multi-functions $(\alpha: V\rightarrow A), (\beta: V\rightarrow B)$ and $(\gamma: V\rightarrow C)$ such that the family $F$ is related to the standard family corresponding to the $\Gamma$-action on $V$ along $\alpha$ and $\beta$, and the family $G$ is related to it along $\beta$ and $\gamma$. Moreover, the degrees of $\alpha$, $\beta$ and $\gamma$ can be bounded in terms of the degrees of the generic members $F_{t}$ and $G_{s}$.

\begin{displaymath}
	\xymatrix{ &&G_{s}\circ  F_{t} \ar[dl] \ar[dr]\\
		&F_{t}\ar[dl] \ar[dr] && G_{s}\ar[dl]\ar[dr]\\
		A &\times &B&\times &C\\
		V\ar[u]_{\alpha}\ar[rr]_{g\in\Gamma} & &V  \ar[u]_{\beta}\ar[rr]_{g'\in\Gamma}&&V\ar[u]_{\gamma} }
\end{displaymath}
\end{lemma}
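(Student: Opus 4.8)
The plan is to construct the algebraic group $\Gamma$ as a group of self-maps of a suitable variety $V$, realized inside the Hilbert scheme, and then to verify that the two input families become ``conjugates'' of the translation action. First I would set up the incidence/composition variety. Let $W \subset \mathcal{H}(A\times C)$ be the closure of the image of $T\times S$ under the rational map $\phi$ classifying the composed multi-functions $G_s\circ F_t$; by hypothesis $W$ contains a component of a $k$-dimensional family, so $\dim W = k$ (it cannot exceed $k$ since $\dim T=\dim S=k$ and a $k$-dimensional family cannot have a common component with something strictly larger dimensional in the relevant sense). The key observation, in the spirit of Elekes--Szab\'o, is that the set of multi-functions arising as $G_s\circ F_t$ is ``closed under a partial composition law'': given $F_{t_1}^{-1}\circ F_{t_2}$ and $G_{s_1}\circ G_{s_2}^{-1}$, composing three such composed maps around a loop produces again a map in the family, because each pair $(F,G)$ factors through the same intermediate family. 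This is where a Hrushovski-style group configuration argument (or its algebraic-geometry translation) enters: a $k$-dimensional family of maps that is ``generically closed under composition and inversion up to a common component'' is, after passing to an equivalent family, the translation family of a $k$-dimensional connected algebraic group acting on a variety $V$.

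Concretely, I would proceed as follows. Step one: replace $F$ and $G$ by equivalent families parametrized by their images $R_F \subset \mathcal{H}(A\times B)$ and $R_G \subset \mathcal{H}(B\times C)$, so that each multi-function is parametrized exactly once and the classifying maps are themselves multi-functions. Step two: form the family $\mathcal{E}$ of multi-functions $F_{t'}^{-1}\circ F_{t}: A\rightarrow A$ (and similarly on the $C$ side); one checks $\mathcal{E}$ is at most $k$-dimensional, and using the common-component hypothesis for $G\circ F$ one shows $\mathcal{E}$ is \emph{generically closed under composition}, i.e. the composition of two generic members of $\mathcal{E}$ lies in (a component of) $\mathcal{E}$. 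Step three: invoke the standard fact that a generically-closed-under-composition, $k$-dimensional constructible family of dominant self-multi-functions of a variety $A$ generates, inside the group of birational self-maps, a $k$-dimensional connected algebraic group $\Gamma$; let $V$ be a model on which $\Gamma$ acts by automorphisms (replacing $A$ by a birational model and $\Gamma$-equivariant completion). Step four: define $\alpha: V\rightarrow A$ as (a component of) the natural multi-function, $\beta: V\rightarrow B$ via a fixed generic $F_{t_0}\circ\alpha$, and $\gamma: V\rightarrow C$ via a fixed generic $G_{s_0}\circ\beta$. Step five: unwind the definitions to see that $F_t = \beta'\circ F_{t_0}^{-1}\circ\ldots$ differs from $\beta\circ F_\gamma\circ\alpha^{-1}$ by a common component, i.e. $F$ is related to the standard $\Gamma$-family along $\alpha,\beta$, and likewise $G$ along $\beta,\gamma$; the degree bounds on $\alpha,\beta,\gamma$ follow by bounded-degree bookkeeping in the Hilbert scheme, since everything was built from a bounded number of compositions of the bounded-degree $F_{t_0}$, $G_{s_0}$ and their inverses.

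The main obstacle I expect is Step three: extracting a genuine \emph{algebraic group} from a family of multi-valued self-maps that is only \emph{generically} closed under composition. One has to (i) pass to a single-valued model, which requires care because the $F_t$ are multi-functions, not automorphisms — this is handled by working with the correspondences in $A\times A$ and using the generic finiteness of the projections, but controlling what ``composition'' means up to common components is delicate; (ii) check that the resulting group of birational transformations is actually of finite dimension and algebraic — here one appeals to Weil's theorem on recovering an algebraic group from a birational group law, whose hypotheses must be verified from the generic associativity and the dimension count; and (iii) produce a variety $V$ on which $\Gamma$ acts regularly, which uses equivariant resolution/completion. A secondary subtlety is ensuring the three multi-functions $\alpha,\beta,\gamma$ can be chosen compatibly so that \emph{both} $F$ and $G$ are simultaneously ``related to'' the \emph{same} $\Gamma$-action along the \emph{same} middle map $\beta$; this forces us to build $\Gamma$ from the composed family $G\circ F$ rather than from $F$ and $G$ separately, and to track the intermediate object $B$ throughout.
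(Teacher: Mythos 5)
Your plan is circular at exactly the decisive point. In Step three you invoke as a ``standard fact'' that a $k$-dimensional family of dominant self-multi-functions, generically closed under composition up to common components, generates a $k$-dimensional connected algebraic group: but that \emph{is} the content of the Composition Lemma (the group configuration theorem), not a tool you may quote to prove it. Weil's theorem, which you correctly name as the engine, only applies once you have a genuine birational group law --- a single-valued, generically defined, generically associative and generically cancellative operation on one variety; it says nothing about multi-valued correspondences considered modulo common components. The entire difficulty, and essentially all of the paper's proof, is the reduction from the latter to the former, and your proposal leaves this as an acknowledged ``obstacle'' rather than supplying an argument. Concretely, the paper does it in stages: a Galois/normalization argument (computing $L=K(\tilde F_t)\cap K(\tilde G_s)$ for generic $(s,t)$, replacing $B$ by the normalization $B'$ of $B$ in $L$) so that the compositions $G_s\circ F_t$ become irreducible and genuinely move in a $k$-dimensional family $R\subset\mathcal{H}(A\times C)$ with classifying map $\mu:T\times S\to R$; a symmetric-power construction $h_r:A\to C^{(\delta)}$, $g_s:B\to C^{(\gamma)}$ that converts the multi-functions $G_s$ and $F_t^{-1}$ into graphs of rational maps; a maximal finite-cover normalization of $A$ and $C$; and finally the replacement of $A$, $C$ and then $B$ by $H_{\mu(0,0)}$, after which $F_t$, $G_s$ are birational automorphisms of a single variety, $S$, $T$, $R$ coincide as subvarieties of $\mathcal{H}(A\times A)$, and $\mu$ is an associative, generically invertible rational operation --- only at that point is Weil's theorem applicable.

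A secondary gap: your Step two asserts that the family $\mathcal{E}=\{F_{t'}^{-1}\circ F_t\}$ is at most $k$-dimensional and generically closed under composition ``using the common-component hypothesis for $G\circ F$.'' Neither claim is immediate: a priori $\mathcal{E}$ could be up to $2k$-dimensional, and extracting closure under composition from the hypothesis on $G_s\circ F_t$ requires analyzing the fibers of $\mu$ (the paper's Second Step, showing the projections of a component $Q_r\subset\mu^{-1}(r)$ to $S$ and to $T$ are generically finite and surjective), which your outline omits. So while your high-level skeleton (parametrize in the Hilbert scheme, eventually apply Weil, then read off $\alpha,\beta,\gamma$ and the degree bounds) matches the paper, the proposal as written does not contain the argument that makes the skeleton work.
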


\begin{definition}
If $G$ is an algebraic group, then the special subvariety $G_{sp}$ of the three-fold product $G^{3}$ is the set
\[G_{sp}=\{(a,b,c)\in G^{3}, abc=1\}\]
We say that our $F$ is a special subvariety of the product $A\times B\times C$ if there is an algebraic group $G$ and there are multi-functions $(\alpha: G\rightarrow A)$, $(\beta: G\rightarrow B)$ and $(\gamma: G\rightarrow C)$ such that $F$ is a component of the $(\alpha\times\beta\times\gamma)$-image of the special subvariety $G_{sp}\subset G^{3}$. Here $\alpha\times \beta\times\gamma$ is the multi-function naturally induced by $\alpha$, $\beta$, $\gamma$ between $G^{3}$ and $A\times B\times C$.
\end{definition}
One might also understand the special subvariety as parameter separation: the subvariety $F$ is globally ``equivalent" (up to some multi-function map) to the hypersurface $G_{sp}$, where each parameter is separately mapped to corresponding coordinate.
\begin{definition}
Given an algebraic variety $A$ and fix an integer $b$ and the degree $d>0$, we say that a finite collection of points $X\subset A$ is \emph{in general position} if any algebraic subset of degree $\leq d$ and dimension less than $\dim(A)$ contains at most $b$ points of $X$.
\end{definition}

Note that if $A$ is of complex (or real) dimension on, then any finite subset $X$ of $A$ is automatically in general position. 

\begin{theorem}{$\mathbf{(Main}$ $\mathbf{Theorem)}$}\label{thm: main theorem}
Let $A$, $B$, $C$ be projective varieties, and let $F\subset A\times B\times C$ be a subvariety with the property  that the projections $ F \rightarrow A\times B$, $F \rightarrow B\times C$, $F\rightarrow C\times A$ are surjective and generically finite. Write $l=\dim(A)$ when $A$ is a real projective variety, and $l=2\dim(A)$ when $A$ is a complex variety. Then there is a positive constant $\eta=\tfrac{1}{16l-18}-\epsilon$ for any very small $\epsilon>0$, and bounded positive constants $n_{0}=n_{0}(\epsilon)$, $d$ depending on $F$  with the following property: Suppose we choose $n> n_{0}$ points on each variety: $X=\{\mathbf{a}_{1},\dots ,\mathbf{a}_{n}\}\subset A$, $Y=\{\mathbf{b}_{1},\dots,\mathbf{b}_{n}\}\subset B$ and $Z=\{\mathbf{c}_{1}, \dots, \mathbf{c}_{n}\}\subset C$ in general position for degree $d$ and some integer $b$. Assume that $|F\cap (X\times Y\times Z)|\geq n^{2-\eta}$. Then $F$ must be a special subvariety. Moreover, the degrees of the multi-functions relating $A$, $B$ and $C$ to the group are bounded.

\begin{displaymath}
	\xymatrix{F\ar@{^{(}->}[rr] \ar[d]&&A &\times &B &\times &C &&\\
			\mathcal{G}_{sp}\ar@{^{(}->}[rr]&&G\ar[u]_{\alpha} &\times&G\ar[u]_{\beta}&\times&G\ar[u]_{\gamma} }
\end{displaymath}
\end{theorem}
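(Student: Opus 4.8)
The plan is to follow the Elekes-Szab\'{o} strategy, deducing the Main Theorem from the Composition Lemma via an incidence-geometric argument. The core idea is a dichotomy: either $F$ already sits inside a cylinder (in which case we are in the degenerate ``curve'' case and nothing further is needed), or, using the hypothesis $|F\cap(X\times Y\times Z)|\geq n^{2-\eta}$, we extract enough structure to build two $k$-dimensional families of multi-functions whose composition-family has a common component with a $k$-dimensional family. Concretely, I would first note that a point $(\mathbf{a},\mathbf{b},\mathbf{c})\in F$ on the Cartesian grid determines, via the projection $F\to A\times B$ being generically finite, a correspondence between $A$ and $B$ fibered over $C$; letting $\mathbf{c}$ range over $Z$ produces a finite family of multi-functions $F_{\mathbf{c}}\colon A\to B$, parametrized by (a subvariety of) $C$, and similarly families $A\to C$ parametrized by $B$ and $B\to C$ parametrized by $A$. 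The combinatorial input says these families each ``use up'' many grid points; the first real step is a counting/pigeonhole argument, of Szemer\'edi-Trotter type, showing that if the associated incidence count $n^{2-\eta}$ exceeds the bound predicted by the relevant incidence theorem (Theorem for $\mathbb{C}^3$ or $\mathbb{R}$, or the higher-dimensional incidence theorem in the later chapter, whichever applies — this is where the exponent $\tfrac{1}{16l-18}$ enters), then the family of multi-functions cannot be ``$\dim = \text{maximal}$'' generic; rather there is a positive-dimensional family of compositions collapsing onto a low-dimensional image, i.e. a common component with a $k$-dimensional family, with $k=l$ (or $l$ in the real case).

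Once that combinatorial-to-algebraic passage is done, I would invoke the Composition Lemma directly: it hands us a $k$-dimensional connected algebraic group $\Gamma$ acting on a variety $V$, together with multi-functions $\alpha\colon V\to A$, $\beta\colon V\to B$, $\gamma\colon V\to C$ of bounded degree, such that $F$ (as one of the extracted families) is related to the standard family of the $\Gamma$-action along $\alpha,\beta$, and the companion family along $\beta,\gamma$. The remaining work is to translate ``related to the standard family along $\alpha,\beta,\gamma$'' into ``$F$ is a special subvariety.'' For this I would analyze the group structure: since $\Gamma$ acts on $V$ and the three families are simultaneously related to the orbit-graphs, the relation $abc=1$ in (a suitable quotient of) $\Gamma$ is forced by the way the three projections $F\to A\times B$, $F\to B\times C$, $F\to C\times A$ interlock — the common-component data on all three pairs, not just one, is what rigidifies the picture. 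Passing to the connected component and a central/abelian reduction (one-dimensional connected algebraic groups over $\mathbb{C}$ being $\mathbb{G}_a$, $\mathbb{G}_m$, or an elliptic curve in the $l=1$ case, and in general a $k$-dimensional group) one identifies $F$ with a component of the $(\alpha\times\beta\times\gamma)$-image of $G_{sp}=\{(a,b,c)\in G^3: abc=1\}$, with degree bounds inherited from the Composition Lemma. The ``general position'' hypothesis on $X,Y,Z$ is used precisely to rule out the parasitic scenario where the grid points all accidentally lie on a lower-dimensional subvariety of $A$, $B$, or $C$, which would otherwise let one cheat the incidence bound.

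I expect the main obstacle to be the first step: rigorously packaging the combinatorial hypothesis $|F\cap(X\times Y\times Z)|\geq n^{2-\eta}$ into the precise hypothesis the Composition Lemma needs, namely the existence of \emph{two} $k$-dimensional families whose composition-family has a common component with a $k$-dimensional family. This requires (i) showing the naturally occurring families are genuinely $k$-dimensional and not smaller (again using general position and the surjectivity/generic-finiteness of the projections), (ii) correctly applying the incidence theorem with the right parameters to conclude a dimension drop in the composition, and (iii) bookkeeping the ``common component'' relation through the equivalences in the Hilbert scheme so that nothing is lost when passing between a family and its image in $\mathcal{H}(A\times B)$. The exponent arithmetic — seeing that $\eta=\tfrac{1}{16l-18}-\epsilon$ is exactly what the incidence bound permits — is routine once the framework is set up, but getting the framework itself stated cleanly, with all degree and dimension bounds tracked, is the delicate part. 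The final translation from ``related to a standard family'' to ``special subvariety'' is, by contrast, mostly formal group theory once the Composition Lemma is in hand, and I would cite the relevant portion of the Elekes-Szab\'{o} paper for the routine verifications while giving the structural argument in full.
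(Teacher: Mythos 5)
Your overall architecture is the paper's: form the family of multi-functions $F_{t}\colon A\rightarrow B$ parametrized by $C$, run a dichotomy on whether the composition family has a common component with a $k$-dimensional family, invoke the Composition Lemma in the degenerate case, and use an incidence bound to kill the other case under the hypothesis $|F\cap(X\times Y\times Z)|\geq n^{2-\eta}$. But the combinatorial half of the dichotomy, which you defer as ``routine exponent arithmetic once the framework is set up,'' is exactly where your plan has a genuine gap. The incidence theorem is never applied to the grid count $n^{2-\eta}$ itself: in the paper one composes $F$ with its own inverse, $F_{s}^{-1}\circ F_{t}\colon A\rightarrow A$ with $(s,t)\in C\times C$, and applies the Incidence Theorem (with $D=4l-4$, $k=2$) to the \emph{doubled} configuration $\mathcal{H}=\{(\mathbf{a}_{i},\mathbf{a}_{j},\mathbf{c}_{l},\mathbf{c}_{m}):(\mathbf{a}_{i},\mathbf{a}_{j})\in\mathcal{F}_{lm}\}$, yielding $|\mathcal{H}|\leq \mathcal{O}(n^{3-\eta'})$ with $\eta'=\tfrac{1}{8l-9}-\epsilon$. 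The bridge to the grid count is a Cauchy--Schwarz over the shared middle coordinate: $\sum_{\mathbf{b}\in Y}|\{(\mathbf{a},\mathbf{c}):(\mathbf{a},\mathbf{b},\mathbf{c})\in F\}|^{2}\geq \tfrac{1}{n}|F\cap(X\times Y\times Z)|^{2}$, while the left side is controlled by $|\mathcal{H}|$ after the Counting Lemma disposes of forbidden (large-fiber) parameters and of degenerate fibers of the projection $W'\rightarrow W$. This is what halves the exponent and produces $\eta=\tfrac{1}{2}\eta'=\tfrac{1}{16l-18}-\epsilon$; your framing, in which $n^{2-\eta}$ is compared directly against an incidence bound, cannot produce this threshold, and without naming the doubled sets and the Cauchy--Schwarz step the ``dimension drop in the composition'' you want is not actually forced.

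Two smaller points. The cylinder alternative you open with is extraneous here: the Main Theorem's hypothesis that all three projections of $F$ are surjective and generically finite already excludes it (cylinders only appear in the statement of the Elekes--Szab\'o theorem itself). And the Case One endgame is not quite ``formal group theory'': because the composed family is $F_{s}^{-1}\circ F_{t}\colon A\rightarrow A$, the Composition Lemma only relates $A$ and $B$ to the group; the third multi-function $\gamma\colon\Gamma\rightarrow C$ must be built separately by identifying the images of $C$ and $\Gamma$ in the Hilbert scheme $\mathcal{H}(A\times B)$ (all of dimension $k$, so the induced maps are multi-functions), and one then needs irreducibility of the generic fibers (via the claim from the Composition Lemma's proof) to conclude that $F$ is a component of the $(\alpha\times\beta\times\gamma)$-image of $\Gamma_{sp}$. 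These are fixable, but the missing doubling/Cauchy--Schwarz mechanism is the step your proposal cannot do without.
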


\begin{remark}
We are most interested in the case where $A$, $B$ and $C$ are the real line $\mathbb{R}$ (resp.  complex line $\mathbb{C}$). In this case, we can prove that $\eta=\tfrac{1}{6}$ (resp. $\tfrac{1}{22}-\epsilon$ for any very small $\epsilon$ directly from Elekes-Szab\'{o} paper, and $\tfrac{1}{6}-\epsilon$ by Theorem 3).

\end{remark}
Next lemma is an initial bound we would expect for the intersection between an subvariety $V$ and  a Cartesian product. We will apply it repeatedly in the proof of main theorem to obtain the non-trivial bound $|F\cap (X\times Y\times Z)|\leq n^{2-\eta}$.
\begin{lemma}{($\mathbf{Counting}$ $\mathbf{Lemma})$}
(a) Let $\tilde{A}$ be an algebraic set, and let $\tilde{X} \subset \tilde{A}$  be a finite subset in general position with respect to the family of all algebraic subsets of bounded degree and dimension smaller than $\dim(\tilde{A})$, and $U=\tilde{A}^{r}$ the product of finitely many copies of $\tilde{A}$ (hence $\tilde{X}^{r} \subset U$). Moreover, let $V\subset U$ be a subvariety of bounded degree and "small" dimension: assume that $\dim(V)<(t+1)\dim(\tilde{A})$, for a positive integer $t$. Then
\[|V\cap \tilde{X}^{r}|=\mathcal{O}(|\tilde{X}|^{t}).\]

(b) If $X\subset A$, $Y\subset B$, $Z\subset C$ as in the Main Theorem, $U$ is the product of $r$ terms, each one of $A$, $B$ or $C$, and $S$ is the corresponding $r$-term product of $X$'s, $Y$'s and $Z$'s, then $|V\cap S|=\mathcal{O}(n^{t})$ holds for any $V\subset U$ of bounded degree and dimension smaller than $(t+1)\dim(A)$.
\end{lemma}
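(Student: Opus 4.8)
The plan is to prove part (a) by a double induction and then to note that part (b) follows by the same argument. First, it suffices to treat the sharp exponent $t=\lfloor\dim(V)/\dim(\tilde{A})\rfloor$, since enlarging $t$ only weakens the conclusion, and it suffices to treat $V$ irreducible, since $V$ has a bounded number of irreducible components each of bounded degree. Write $d_0=\dim(\tilde{A})$. I would argue by induction on $r$ (outer) and, for fixed $r$, on $\dim(V)$ (inner). The base cases are immediate: if $\dim(V)=0$ then $|V\cap\tilde{X}^r|\leq|V|=\mathcal{O}(1)$; and if $r=1$ then either $\dim(V)<d_0$, so every component of $V$ is a bounded-degree subvariety of $\tilde{A}$ of dimension less than $d_0$ and general position gives $|V\cap\tilde{X}|=\mathcal{O}(1)=\mathcal{O}(|\tilde{X}|^0)$, or $\dim(V)=d_0$, so $t=1$ and trivially $|V\cap\tilde{X}|\leq|\tilde{X}|=\mathcal{O}(|\tilde{X}|^{t})$.

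For the inductive step ($r\geq 2$, $\dim(V)\geq 1$), let $\pi:\tilde{A}^{r}\to\tilde{A}^{r-1}$ be the projection onto the first $r-1$ coordinates, let $W=\overline{\pi(V)}$, and let $e=\dim(V)-\dim(W)$ be the dimension of the generic fibre of $\pi|_{V}$; since each fibre $V_{q}=V\cap\pi^{-1}(q)$ sits inside a copy of $\tilde{A}$ we have $0\leq e\leq d_{0}$. Let $\Sigma\subseteq W$ be the locus of $q$ such that $V_{q}$ has an irreducible component of dimension $d_{0}$; by upper semicontinuity of fibre dimension $\Sigma$ is closed, and (a point to be checked) $\Sigma$ has bounded degree, as do all the fibres $V_{q}$. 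Then split according to whether $\Sigma=W$.

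If $\Sigma=W$ then $e=d_{0}$, so $\dim(W)=\dim(V)-d_{0}$ and $\lfloor\dim(W)/d_{0}\rfloor=t-1$ (and $t\geq1$); the outer induction hypothesis applied to $W\subseteq\tilde{A}^{r-1}$ gives $|W\cap\tilde{X}^{r-1}|=\mathcal{O}(|\tilde{X}|^{t-1})$, and since $V\cap\tilde{X}^{r}$ fibres over $\pi(V)\cap\tilde{X}^{r-1}\subseteq W\cap\tilde{X}^{r-1}$ with fibres $V_{q}\cap\tilde{X}$ of size at most $|\tilde{X}|$, we get $|V\cap\tilde{X}^{r}|\leq|\tilde{X}|\cdot|W\cap\tilde{X}^{r-1}|=\mathcal{O}(|\tilde{X}|^{t})$. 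If instead $\Sigma\subsetneq W$, write $|V\cap\tilde{X}^{r}|\leq|(V\cap\pi^{-1}(W\setminus\Sigma))\cap\tilde{X}^{r}|+|(V\cap\pi^{-1}(\Sigma))\cap\tilde{X}^{r}|$. In the first summand every fibre over a point of $W\setminus\Sigma$ has all components of dimension $<d_{0}$, so general position (applied to each of the boundedly many components) bounds $|V_{q}\cap\tilde{X}|$ by a constant, while $\dim(W)\leq\dim(V)$ gives $|W\cap\tilde{X}^{r-1}|=\mathcal{O}(|\tilde{X}|^{t})$ by the outer hypothesis; hence the first summand is $\mathcal{O}(|\tilde{X}|^{t})$. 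In the second summand $V'=V\cap\pi^{-1}(\Sigma)$ is a proper closed subset of the irreducible variety $V$ (since $\pi(V)$ is dense in $W$ and $\Sigma\subsetneq W$ is closed), so $\dim(V')\leq\dim(V)-1$ and the inner hypothesis applied to $V'\subseteq\tilde{A}^{r}$ gives $|V'\cap\tilde{X}^{r}|=\mathcal{O}(|\tilde{X}|^{t})$. This closes the induction, and part (b) follows by running the identical argument: in the Main Theorem $\dim(A)=\dim(B)=\dim(C)$, so every coordinate fibre lives in a variety of this common dimension, and each of $X$, $Y$, $Z$ is in general position with respect to bounded-degree subvarieties of dimension less than that common dimension — which is precisely what the proof uses.

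The main obstacle is not the combinatorial bookkeeping but the uniformity of the algebraic-geometry inputs along the recursion: one must guarantee that all the fibres $V_{q}$ and all the fibre-dimension degeneracy loci $\Sigma$ that arise have degree bounded purely in terms of $\deg(V)$, $r$ and $d_{0}$, so that the implied constants stay under control through the (bounded-depth) induction. This is a standard consequence of Bézout-type bounds and effective elimination theory, but it is the step that needs to be stated carefully rather than invoked in passing.
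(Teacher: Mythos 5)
Your argument is essentially the paper's own proof: project away one factor of $\tilde{A}^{r}$, split the base according to whether the fibre has full dimension $\dim(\tilde{A})$, bound the small fibres by general position and the full fibres trivially by $|\tilde{X}|$, and induct on the number of factors (the paper bounds the full-fibre locus directly via $\dim(U'_{1})\leq\dim(V)-\dim(\tilde{A})$ where you use an extra inner induction on $\dim(V)$, and it handles (b) by substituting $A\cup B\cup C$ and $X\cup Y\cup Z$ — cosmetic differences only). The one caveat is your claim that $\Sigma$ is closed, which requires properness of the projection; this is automatic in the projective setting of the Main Theorem, and otherwise is repaired by keying the case split on the generic fibre dimension (or replacing $\Sigma$ by its closure), so it does not affect the argument.
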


\begin{proof}
(a) The idea of the proof is using induction on $\dim(U)$. If $r\leq t$ or $r=1$ then we are done by hypothesis. Otherwise $U$ can be written as $U=\tilde{A} \times U'$ with the natural projection $\pi : V \rightarrow U'$. We separate $U'$ into two parts $U' = U'_{1}\sqcup U'_{2}$ according to dimension of fibres:
\[ U'_{1} :=\{ u\in U' |\dim ( \pi ^{-1} (u))=\dim(\tilde{A})\}; \ \   U'_{2}= U' \setminus U'_{1}.\]
Then $\dim (U'_{1}) \leq \dim(V)-\dim(\tilde{A}) \leq t \dim(\tilde{A})$, and we control contribution of each set by induction on projection images and fibres. 
\[ | V \cap \pi ^{-1}(U'_{1}) \cap \tilde{X}^{r} |= \mathcal{O}(|\tilde{X}|) \cdot \mathcal{O}(|\tilde{X}|^{t-1})=\mathcal{O}(|\tilde{X}|^{t}) \]
\[  |V \cap \pi^{-1}(U'_{2}) \cap \tilde{X}^{r}| =\mathcal {O}(1) \cdot \mathcal{O}(|\tilde{X}|^{t})=\mathcal{O}(|\tilde{X}|^{t})\]

(b) follows easily if we use $A\cup B\cup C$ in place of $\tilde{A}$ and $X \cup Y \cup Z$ in place of $\tilde{X}$ in (a).

\end{proof}

Now we can prove the Main Theorem with the above two lemmas.

\begin{proof}
The projective varieties $A$, $B$, $C$ and algebraic variety $F$ can be assumed irreducible. 

\begin{claim} There exists a dense open set $C'$ of $C$ such that for any $p\in C'$, the fiber $F_{p}$ is a multi-function $A\rightarrow B$. Write $k=dim(A)$, then $F$ is a $k$-dimensional family of multi-functions between $A$ and $B$.

\end{claim}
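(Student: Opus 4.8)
The plan is to verify the two assertions of the claim in turn: first that for $p$ in a suitable dense open $C'$ the fibre $F_p\subseteq A\times B$ satisfies the definition of a multi-function $A\to B$, and then that the resulting rational classifying map $\phi_F\colon C\dashrightarrow\mathcal H(A\times B)$ is generically finite, so that $\dim\phi_F(C)=\dim C=k$.

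Before anything else I would record the dimension bookkeeping. Using the reduction to irreducible $A,B,C,F$ already made, the hypothesis that all three projections $F\to A\times B$, $F\to B\times C$, $F\to C\times A$ are surjective and generically finite forces $\dim F=\dim(A\times B)=\dim(B\times C)=\dim(C\times A)$, hence $\dim A=\dim B=\dim C=:k$ and $\dim F=2k$; moreover $F\to C$ (factoring through $F\to C\times A$) is surjective, so its generic fibre has dimension $2k-k=k$. For the first assertion: $F_p=F\cap(A\times B\times\{p\})$ is closed in $A\times B$ as a fibre of a morphism. Surjectivity of $F_p\to A$ holds for every $p$ because $F\to C\times A$ is surjective; and if $U\subseteq C\times A$ is the dense open over which $F\to C\times A$ has finite fibres, then for generic $p$ the slice $U_p\subseteq A$ is dense open (the complement of $U$ is a proper closed set, whose fibre over a generic $p$ has dimension $<k$), so $F_p\to A$ has finite fibres over $U_p$. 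Running the same argument with the projection $F\to B\times C$ shows $F_p\to B$ is surjective and generically finite for $p$ in a dense open. Let $C'$ be the intersection of these dense opens; for $p\in C'$ the subvariety $F_p\subseteq A\times B$ is a multi-function (its dimension is then automatically $k$), so $F$ is, by definition, a family of multi-functions parametrized by $C$.

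The substantive point is that $\phi_F$ is generically finite, equivalently that for generic $p$ the set $S_p:=\{p'\in C : F_{p'}=F_p\}$ is finite. Fix a generic $p$ and let $W\subseteq A\times B$ be the dense open over which the projection $\nu\colon F\to A\times B$ has finite fibres. The key observation is that for a generic point $(p,a)$ of $C\times A$, every point of $F$ lying over $(p,a)$ already lies over $W$: indeed $\nu^{-1}\big((A\times B)\setminus W\big)$ is a proper closed subset of $F$, so its image under the projection $\mu\colon F\to C\times A$ has dimension $<2k$ and is therefore not dense in $C\times A$; slicing over a generic $p$ we can avoid it by choosing $a\in A$ generic. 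For such $(p,a)$, pick any $b$ with $(a,b,p)\in F$ (possible since $F\to C\times A$ is surjective); then $(a,b)\in W$, so $\{c : (a,b,c)\in F\}$ is finite, and since $F_{p'}=F_p\ni(a,b)$ forces $(a,b,p')\in F$ we get $S_p\subseteq\{c:(a,b,c)\in F\}$, which is finite. Hence $\phi_F$ is generically finite, $\dim\phi_F(C)=\dim C=k$, and $F$ is a $k$-dimensional family of multi-functions between $A$ and $B$.

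I expect the last paragraph to be the main obstacle: transferring generic finiteness of $F\to A\times B$ into the assertion that, over a generic point of $C\times A$, the finitely many companion points in $B$ are already generic enough in $A\times B$. This is precisely where irreducibility of $F$ and the surjectivity of all three projections get used in an essential way; the remaining steps are routine upper semicontinuity of fibre dimension.
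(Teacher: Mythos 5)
Your proposal is correct and follows essentially the same route as the paper: for the first assertion both arguments come down to semicontinuity of fibre dimension (the locus of bad parameters $p$ is a proper closed subset of $C$), and for the second both use generic finiteness of $F\to A\times B$ to force the fibre of the classifying map $\phi_F$ into a finite set $\{c:(a,b,c)\in F\}$. Your version is phrased directly rather than by contradiction and is slightly more careful than the paper, which invokes generic finiteness at an arbitrary $(a,b)\in A\times B$, whereas you arrange, via the avoidance argument with $W$, that the chosen $(a,b)$ actually lies in the open set where the fibres of $F\to A\times B$ are finite.
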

\begin{proof}
We have natural projections: $\pi_{A} : F_{p}\rightarrow A$ and $\pi_{B} : F_{p} \rightarrow B$. They are surjective because $F\rightarrow A\times C$ and $F \rightarrow B\times C$ are surjective by hypothesis. For a fixed $p\in C$, if $F_{p} \rightarrow A$ is not generically finite, then $\dim(F_{p})>\dim(A)$. Let $C_{0}$ denote the set of points $q \in C$ such that $F_{q}$ is not generically finite, then $\dim(C_{0}) <\dim(C)$. Hence $C_{0}$ lies on a proper closed subvariety of $C$, and we can find a dense open set $C' \subset C$ such that for every point $p$ of $C'$, the fibre $F_{p}$ is a multi-function.

Replacing $C$ with $C'$,  we may assume that any $p\in C$, $F_{p}$ is a multi-function. Next we will prove that $F$ is a $k$-dimensional family of multi-functions. Suppose that the image of $F$ in $\mathcal{H}(A\times B)$, called $R$, is $k' <k$ dimensional. Then for a generic $r\in R$, the inverse image of $r$ is $(k-k')$-dimensional. After switching to a dense open set of $C$ and the corresponding image $R$, we may assume that for all $r\in R$, the inverse image of $r$ is $(k-k')$-dimensional. Since $F\rightarrow A\times B$ is surjective, for every point $(a,b)\in A\times B$, there exists a $c\in C$, such that $(a,b,c)\in F$.  Let $r$ be the image of $c$ in $\mathcal{H}(A\times B$. The inverse image of $r$ is $(k-k')$-dimensional, hence there are infinitely many $c'\in C$ such that $(a,b,c')\in F$, which contradicts the assumption that $F\rightarrow A\times B$ is generically finite.

\end{proof}
 We can replace $C$ with $C'$ while only losing $\mathcal{O}(n)$ points of the form $(\mathbf{a}_{i}, \mathbf{b}_{j}, \mathbf{c}_{k})$ in $F$: we lose at most $\mathcal{O}(1)$ $\mathbf{c}_{k}$ by restricting to $C'$, and at most $n\cdot\mathcal{O}(1)$ of $(\mathbf{b}_{j}, \mathbf{c}_{k})$ in $B\times C$. By hypothesis, $F \rightarrow B\times C$ is generically finite, so we can repeat the trick in the above lemma and conclude that there are at most $\mathcal{O}(n)$ points' loss from $F\cap (X\times Y\times Z)$. 

From now on, for every $p\in C$, $F_{p}$ is a multi-function.

We will consider the following composition of  families of multi-functions $(F_{t} :A\rightarrow B, t\in C)$ and $(F_{s}^{-1}: B\rightarrow A, s\in C)$:

\begin{displaymath}
	\xymatrix{&& F_{s}^{-1}\circ F_{t}\ar[dl]\ar[dr] &&\\
			&F_{t}\ar[dl]\ar[dr]&&F_{s}^{-1} \ar[dl]\ar[dr]&\\
			A \ar[rr]&&B\ar[rr]&&A}
\end{displaymath}

If the family of composition $(F_{s}^{-1} \circ F_{t} : A \rightarrow A, (s,t)\in C\times C)$ has a common component with a $k$-dimensional family of multi-functions, we will use the Composition Lemma to find the group, otherwise we use combinatorical bounds to show that $|F\cap (X\times Y\times Z)|\leq \mathcal{O}(n^{2- \eta})$.

\textbf{Case One.} If $F_{s}^{-1}\circ F_{t}$ has a common component with a $k$-dimensional family, by the Composition Lemma, we have a $k$-dimensional connected algebraic group $\Gamma$ acting on a variety $V$, and multi-functions $(\alpha: A\rightarrow V)$, $(\beta: B\rightarrow V)$ such that the family $F$ is related to the standard family $H$ corresponding to the $\Gamma$-action on $V$ along $\alpha$ and $\beta$. More precisely, the standard family $H$ is $(H_{g}: V\rightarrow V, v \rightarrow g(v), g\in \Gamma)$. We note  $\tilde{F}=\beta\circ H\circ \alpha^{-1}$ the family of multi-functions from $A$ to $B$ parameterized by $\Gamma$.
\begin{displaymath}
	\xymatrix{&& F_{s}^{-1}\circ F_{t}\ar[dl]\ar[dr] &&\\
			&F_{t}\ar[dl]\ar[dr]&&F_{s}^{-1} \ar[dl]\ar[dr]&\\
			A \ar[rr]&&B\ar[rr]&&A\\
			V\ar[rr]\ar[u]^{\alpha}&&V\ar[rr]\ar[u]^{\beta}&&V\ar[u]}
\end{displaymath}
Since $F\rightarrow A\times B$ is generically finite and surjective,  the group action $\Gamma$ on $V$ is transitive. And as $\dim (V)$=$\dim(\Gamma)$, algebraic variety $V$ must be the quotient of $G$ by a finite subgroup. We can replace $V$ with $G$, and the new $\alpha$ and $\beta$ are still multi-functions. 

From Composition Lemma, we know that $F$ and $\tilde{F}$ share a common component as families of multi-function:
\begin{displaymath}
	\xymatrix{F\ar@{^{(}->}[d] && F_{1} \ar@{^{(}->}[d]&& \tilde{F}_{1}\ar@{^{(}->}[d]&&\tilde{F}\ar@{^{(}->}[d]\\
		A\times B\times C\ar[d] && A\times B\times U\ar[d] &&A\times B\times U\ar[d]&&A\times B\times \Gamma\ar[d]\\
		C\ar[dr]^{\phi_{F}}&&U\ar[dl]_{\phi_{F_{1}}}&&U\ar[dr]^{\phi_{\tilde{F}_{1}}}&& \Gamma\ar[dl]_{\phi_{\tilde{F}}}\\
		&R_{F}\ar@{^{(}->}[drr]&&&&R_{\tilde{F}}\ar@{^{(}->}[dll]&\\
		&&&\mathcal{H}(A\times B)&&&}
\end{displaymath}
Here $R_{F}$ and $R_{\tilde{F}}$ are the images of $C$ and $\Gamma$ in Hilbert scheme $\mathcal{H}(A\times B)$. And $U$ is in fact the $H_{\mu(0,0)}$ in the proof of Composition Lemma, so $\dim U=\dim C$. The dimensions of $C, U, R_{F}, R_{\tilde{F}}, \Gamma$ are the same, hence $\phi_{F}^{-1}, \phi_{F_{1}}, \phi_{\tilde{F}_{1}}^{-1}, \phi_{\tilde{F}}$ are all well-defined multi-functions. Then we obtain a multi-function $\gamma$ from $\Gamma$ to $C$ by compositing them.

One can read from the above two diagrams that roughly $\gamma(c) \oplus \alpha(a)=\beta(b)$. More precisely, to conclude the \textbf{Case One}, we show that $F$ is a component (as algebraic variety) of the corresponding multi-function image of $\tilde{F}$. 

If almost every fiber $F_{c}, c\in C$ is a irreducible variety, then we know that $F_{1}$ is a subvariety of $\tilde{F}_{1}$ (at least in a dense open set). As $\tilde{F}$ is $(\alpha\times\beta\times Id)$-image of $\Gamma_{sp}$, and $F$ is irreducible by hypothesis, we conclude that $F$ is a component of the image of $\Gamma_{sp}$ under $(\alpha\times\beta\times\gamma)$.

Otherwise one can apply Claim~\ref{cl: composition lemma trick}, and replace $F$ with another family of multi-functions $F'_{0}$, which has irreducible fibers. In addition, the morphism $F_{0}\rightarrow F$ is dominant.  As shown above, $F_{0}$ is a component of multi-function image of $\Gamma_{sp}$, so is $F$.

\textbf{Case Two.} Any component of the family $\mathcal{F} :=\{F_{s}^{-1} \circ F_{t}\}$ is at least $(k+1)$-dimensional family.

\begin{displaymath}
	\xymatrix{\mathcal{F}=\{F_{s}^{-1}\circ F_{t}\} \ar@{^{(}->}[d]&&\\
		A\times A\times C\times C\ar[d]&&\\
		C\times C\ar[dr]^{\psi_{\mathcal{F}}}\ar[rr]&&\mathcal{H}(A\times A)\\
		&R = Image(C\times C)\ar@{^{(}->}[ur] &}
\end{displaymath}

We shall use the Claim~\ref{cl: composition lemma trick} from the proof of Composition Lemma to shrink $C\times C$ (we still have $\mathcal{O}(n^{2-\eta})$ points after shrinking) and split $\mathcal{F}$ into irreducible components $\mathcal{F}^{i}$ such that the fibres $\mathcal{F}^{i}_{st}$ of each component are irreducible. Without loss of generality, we consider only one irreducible component in the following arguments.

Let $\mathcal{F}_{lm}$ denote the multi-function parameterized by $(\mathbf{c}_{l}, \mathbf{c}_{m})\in Z\times Z \subset C\times C$. The point $(\mathbf{c}_{l},\mathbf{c}_{m})$ is mapped to a point $x$ in the Hilbert scheme $\mathcal{H}(A\times A)$, and $x$ parametrizes exactly the closed sub variety $\mathcal{F}_{lm}$ of $A\times A$. 

We will apply the Counting Lemma to estimate the number of points in the set 
$\mathcal{H}=\{(\mathbf{a}_{i},\mathbf{a}_{j},\mathbf{c}_{l},\mathbf{c}_{m});(\mathbf{a}_{i},\mathbf{a}_{j})\in\mathcal{F}_{lm}\}$. 
Let $\mathcal{U}$ denote the union of those fibers of $\psi$ whose dimension less than $\dim(A)$, which is a dense open set of $C\times C$. Let $\mathcal{V}$ be the complement of $\mathcal{U}$. We shall call the points $(\mathbf{c}_{l},\mathbf{c}_{m}) \in \mathcal{V}\cap Z^{2}$ and corresponding $\mathcal{F}_{lm}$  forbidden, and call the others ordinary. We will bound the cardinality of $\mathcal{H}$ separately for the forbidden set and the ordinary set. 

As $\mathcal{V}$ is a proper closed set of $C\times C$, its dimension is strictly smaller than $\dim(C\times C)$. Applying Counting Lemma again, we have $|\mathcal{V}\cap Z^{2}| \leq \mathcal{O}(n)$, i.e. there are at most $\mathcal{O}(n)$ forbidden points. Hence there are at most $\mathcal{O}(n)$ forbidden multi-functions. Moreover, on  the closed subvariety defined by multi-function $\mathcal{F}_{lm}$, there are at most $\mathcal{O}(n)$ points  $(\mathbf{a}_{i}, \mathbf{a}_{j})$ of $X\times X $. As such, the contribution of forbidden multi-functions on $\mathcal{H}$ is at most $\mathcal{O}(n^{2})$.

One can switch the role of $A$ and $C$ and repeat the above arguments until now. The new composition family multi-functions $\tilde{\mathcal{F}}$ parameterized by $A\times A$ is indeed the same subvariety $\mathcal{F}$ of $A\times A\times C\times C$. To say that a point $(\mathbf{c}_{l}, \mathbf{c}_{m})$ lies on multi-function $\tilde{\mathcal{F}}_{ij}$ is the same as to say that the point $(\mathbf{a}_{i},\mathbf{a}_{j})$ lies on multi-function $\mathcal{F}_{lm}$.  Similarly, we have at most $\mathcal{O}(n)$ of $(\mathbf{a}_{i}, \mathbf{a}_{j})$ corresponding to forbidden multi-functions (from $C$ to $C$), and at most $\mathcal{O}(n^{2})$ points $(\mathbf{a}_{i},\mathbf{a}_{j},\mathbf{c}_{l},\mathbf{c}_{m})$ come from forbidden multi-functions(from $C$ to $C$).

Next we will bound the contribution of ordinary points to $\mathcal{H}$, i.e. when both $(\mathbf{a}_{i},\mathbf{a}_{j})$ and $(\mathbf{c}_{l},\mathbf{c}_{m})$ are ordinary. There are at most $\mathcal{O}(n^{2})$ ordinary multi-functions, each multi-function is irreducible by assumption. And every ordinary multi-function corresponds to at most $\mathcal{O}(1)$ points from $X\times X$ or $Z \times Z$. From now on, we do not distinguish ordinary multi-function $\mathcal{F}_{l,m}$ from the point $(\mathbf{c}_{l},\mathbf{c}_{m})$ representing it. Every two multi-functions (subvarieties) intersect in at most $\mathcal{O}(1)$ points in $X\times X$ (resp. $Z\times Z$), or we can say that through every two ordinary points from $X\times X$ (resp. $Z \times Z$), there are at most $\mathcal{O}(1)$ multi-functions parameterized by points in $Z\times Z$ (resp. $X\times X$ ). 

Estimate size of $\mathcal{H}$ turns out to estimate incidences between points and algebraic varieties. In the most general case, the underlying point set is of combinatorial dimension 2 with respect to the underlying algebraic varieties set. Then apply Incidence Theorem ~\ref{thm: incidence theorem} with $D=4l-4$ and $k=2$, we have
\begin{equation}\label{eq: incidence bound}
|\mathcal{H}|\leq \mathcal{O}(n^{3-\eta'}) \text{ for }\eta' = \tfrac{1}{8l-9}-\epsilon
\end{equation}
for any very small $\epsilon$.

Next we want to get a lower estimate (which relate to $|F\cap X\times Y\times Z|$)  and compare with it.
 Let $\mathcal{H}'=\{(\mathbf{a}_{i},\mathbf{a}_{j},\mathbf{b}_{k},\mathbf{c}_{l},\mathbf{c}_{m}); (\mathbf{a}_{i},\mathbf{b}_{k})\in F_{c_{l}}, (\mathbf{a}_{j},\mathbf{b}_{k})\in F_{c_{m}} \}$ , 
 $W'=\{(\mathbf{a}, \mathbf{a}',\mathbf{b}, 
 \mathbf{c},\mathbf{c}'); (\mathbf{a}, \mathbf{b}, \mathbf{c}) , (\mathbf{a}', \mathbf{b}, \mathbf{c}')\in F\}$, 
  $W=\{(\mathbf{a},\mathbf{a}',\mathbf{c},\mathbf{c}'); \exists \mathbf{b}: (\mathbf{a}, \mathbf{b}, \mathbf{c}) , (\mathbf{a}', \mathbf{b}, \mathbf{c}')\in F\}$. Clearly $\mathcal{H}' \subset W'$ and $\mathcal{H}\subset W$.  There is a natural projection $\phi: W'\rightarrow W$. The dimension of $W'$ is $3 \cdot\dim(A)$ because we can choose $\mathbf{a}$, $ \mathbf{b}$, and $\mathbf{c}$ freely in $F$ (of dimension $2\cdot \dim(A)$) and $\mathbf{a}'$ freely in $A$, then there is only $\mathcal{O}(1)$ many $\mathbf{c}'$ satisfying the condition. The same argument implies also $\dim(W)=3\cdot\dim(A)$.

Apply the Counting Lemma again. Let $\bar{V}\subset W'$ be the union of $\dim(B)$ fibers under the projection $\phi$, and $\bar{U}=W\setminus \bar{V}$. Since $\dim(\bar{V})<\dim(W)=3 \cdot \dim(A)$, we have  $|\bar{V}\cap \mathcal{H}'|=\mathcal{O}(n^{2})$ by Counting Lemma. Since $\bar{U}$ is the union of fibers less than $\dim(B)$-dimensional,  \[|\bar{U}\cap \mathcal{H}'|\leq \mathcal{O}(1)\cdot|\mathcal{H}|\leq \mathcal{O}(n^{3-\eta'}).\]

On the other hand, 

\begin{align*}
|\mathcal{H}'|&=\sum_{\mathbf{b}_{k}\in Y} |\{(\mathbf{a}_{i}, \mathbf{a}_{j}, \mathbf{b}_{k},\mathbf{c}_{l}, \mathbf{c}_{m}); (\mathbf{a}_{i}, \mathbf{b}_{k}, \mathbf{c}_{l})\in F, (\mathbf{a}_{j},\mathbf{b}_{k},\mathbf{c}_{m})\in F\}| \\
&=\sum_{\mathbf{b}_{k}\in Y} |\{(\mathbf{a}_{i},\mathbf{b}_{k},\mathbf{c}_{l})\in F\}| \cdot |\{\mathbf{a}_{j},\mathbf{b}_{k},\mathbf{c}_{m})\in F\}| \\
&=\sum_{\mathbf{b}_{k}\in Y} |\{(\mathbf{a}_{i},\mathbf{b}_{k},\mathbf{c}_{l})\in F\}|^{2}\\
& \geq \tfrac{1}{n} (\sum_{\mathbf{b}_{k}\in Y}|\{(\mathbf{a}_{i},,\mathbf{b}_{k},\mathbf{c}_{l})\in F\}|^{2} =\tfrac{1}{n} |F\cap X\times Y\times Z|^{2}
\end{align*}

Inserting the estimations of $|\mathcal{H}'|$ on the left, we have $\mathcal{O}(n^{3-\eta'}) \geq \tfrac{1}{n}|F\cap X\times Y\times Z|^{2}$. So we have the bound $|F\cap X\times Y\times Z| \leq \mathcal{O}(n^{2-\eta})$, where $\eta = \tfrac{1}{2} \eta'=\tfrac{1}{16l-18}-\epsilon$ (for any small $\epsilon$).

\end{proof}

The Theorem ~\ref{thm: main theorem} works for all algebraic closed fields. For those fields not algebraically closed, we could extend to their algebraic closures and then take restriction. And the exponent on $n$ depends only on the incidence estimate ~\ref{eq: incidence bound}. As long as we could get a better incidence bound in specific cases,  we would improve the bounds on the Theorem ~\ref{thm: main theorem}.

For example, if $A$, $B$, $C$ are real lines $\mathbb{R}$, the underlying multi-functions are indeed bounded degree algebraic curves in the plane $\mathbb{R}^{2}$. Moreover, the underlying point-curve incidences satisfy ``two degrees of freedom and multiplicity-type $\mathcal{O}(1)$" condition in Pach-Sharir paper \cite{pach1998number} : for any two points there are at most $\mathcal{O}(1)$ curves of $L$ passing through them, and any pair of curves from $L$ intersect in at most $\mathcal{O}(1)$. From \cite{pach1998number}, we have $|\mathcal{H}|\leq \mathcal{O}(n^{8/3})$, thus we obtained $n^{11/6}$ in Theorem ~\ref{thm: main theorem} and Theorem ~\ref{thm: elekes-szabo} in real case. 

If $A$, $B$ and $C$ are complex lines $\mathbb{C}$, we shall apply the following Theorem ~\ref{thm: complex}. Then we improve the exponent of $n$ from $\tfrac{21}{22}+\epsilon$ to $\tfrac{11}{6}+\epsilon$.

\chapter{A bound on complex case}\label{section: complex}

\begin{theorem} \label{thm: complex}
Let $\mathcal{F}\subset A\times A\times C\times C$ be a family of multi-functions as in the above Chapter. Moreover, suppose that $A=C=\mathbb{C}$. Then each multi-function $\mathcal{F}_{c,c'}\subset A\times A$ is in fact a complex algebraic curve of bounded degree, and so is $\mathcal{F}_{a,a'}$ for any $(c,c')\in C\times C$ and any $(a,a')\in A\times A$. There is a bounded number $d$ depending only on $\mathcal{F}$, and a bounded positive constant $n_{0}$ with the following property: We choose $n$ points as in the Main Theorem: $X=\{\mathbf{a}_{1},...,\mathbf{a}_{n}\}\subset A$ and $Z=\{\mathbf{c}_{i},...,\mathbf{c}_{n}\}\subset C$ Then$|\mathcal{F}\cap (X\times X\times Z\times Z)|$ is at most $\mathcal{O}(n^{\tfrac{8}{3}+\epsilon})$ for any small $\epsilon>0$.
\end{theorem}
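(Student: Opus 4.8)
\emph{Proof strategy.} The plan is to read the quantity $|\mathcal{F}\cap(X\times X\times Z\times Z)|$ as a point-curve incidence count in the plane $\mathbb{C}^{2}\cong\mathbb{R}^{4}$ and to apply a sharp planar-type incidence bound there. I would set $P=\{(\mathbf{a}_{i},\mathbf{a}_{j}):1\leq i,j\leq n\}\subset\mathbb{C}^{2}$, so that $|P|\leq n^{2}$, and let $\mathcal{C}$ be the set of irreducible components (each of bounded degree) of the curves $\mathcal{F}_{\mathbf{c}_{l},\mathbf{c}_{m}}\subset\mathbb{C}^{2}$ for $1\leq l,m\leq n$. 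Since $(c,c')\mapsto\mathcal{F}_{c,c'}$ is generically finite and each $\mathcal{F}_{c,c'}$ splits into $\mathcal{O}(1)$ components, a given member of $\mathcal{C}$ arises from $\mathcal{O}(1)$ parameter pairs, so that $|\mathcal{F}\cap(X\times X\times Z\times Z)|\lesssim I(P,\mathcal{C})+\mathcal{O}(n^{2})$, where $I(P,\mathcal{C})=\#\{(p,\ell)\in P\times\mathcal{C}:p\in\ell\}$ and $|\mathcal{C}|\leq n^{2}$.

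Next, exactly as in the proof of Theorem~\ref{thm: main theorem}, I would discard the \emph{forbidden} quadruples: those for which $(\mathbf{c}_{l},\mathbf{c}_{m})$ lies in the degeneracy locus of the parametrization $C\times C\to\mathcal{H}(A\times A)$, or, symmetrically, $(\mathbf{a}_{i},\mathbf{a}_{j})$ lies in the degeneracy locus of $A\times A\to\mathcal{H}(C\times C)$. Each such locus is a proper subvariety, hence contains only $\mathcal{O}(n)$ grid points by the Counting Lemma, and the curve attached to each carries only $\mathcal{O}(n)$ grid points of the opposite kind; so the forbidden quadruples number $\mathcal{O}(n^{2})$. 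On the remaining \emph{ordinary} part the hypotheses of a planar incidence theorem hold: the curves have bounded degree; two distinct irreducible complex curves of bounded degree in $\mathbb{C}^{2}$ meet in $\mathcal{O}(1)$ points by B\'ezout, so, viewed as real $2$-surfaces in $\mathbb{R}^{4}$, any two members of $\mathcal{C}$ meet in $\mathcal{O}(1)$ points; and since $(\mathbf{a}_{i},\mathbf{a}_{j})\in\mathcal{F}_{c,c'}$ is equivalent to $(c,c')\in\mathcal{F}_{\mathbf{a}_{i},\mathbf{a}_{j}}$, which is a bounded-degree curve in $C\times C=\mathbb{C}^{2}$, the members of $\mathcal{C}$ passing through two ordinary points of $P$ correspond to the $\mathcal{O}(1)$ intersection points of $\mathcal{F}_{\mathbf{a}_{i},\mathbf{a}_{j}}$ and $\mathcal{F}_{\mathbf{a}_{i'},\mathbf{a}_{j'}}$ --- so two points of $P$ lie on $\mathcal{O}(1)$ curves.

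With these conditions in place I would invoke the higher-dimensional incidence theorem of Solymosi--Tao \cite{solymosi2012incidence} (equivalently, the sharp Szemer\'edi--Trotter theorem for points and $2$-surfaces in $\mathbb{R}^{4}$): in this balanced dimension --- where a $2$-surface meets a $2$-surface generically in a point, just as two lines meet in the plane --- one obtains the planar exponent $\tfrac{2}{3}$, up to an arbitrarily small loss. Hence, for every $\epsilon>0$,
\[
I(P,\mathcal{C})\;\lesssim_{\epsilon}\;|P|^{2/3+\epsilon}|\mathcal{C}|^{2/3}+|P|+|\mathcal{C}|\;\lesssim_{\epsilon}\;(n^{2})^{2/3+\epsilon}(n^{2})^{2/3}+n^{2}\;=\;n^{8/3+2\epsilon}+n^{2}.
\]
Since $8/3>2$, this is $\mathcal{O}(n^{8/3+\epsilon})$ after renaming $\epsilon$, and adding the $\mathcal{O}(n^{2})$ from the forbidden quadruples and the $\mathcal{O}(1)$ multiplicities gives the stated bound. (The gain over the general incidence theorem of Theorem~\ref{thm: incidence theorem} is exactly that in $\mathbb{R}^{4}$ one recovers the planar exponent; over $\mathbb{R}$, using Pach--Sharir \cite{pach1998number} instead of Solymosi--Tao removes the $\epsilon$ and yields $n^{8/3}$.)

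The step I expect to be most delicate is not the counting but legitimising the passage to $\mathbb{R}^{4}$: one must confirm that bounded-degree \emph{complex} curves, regarded as real $2$-surfaces, genuinely satisfy the non-degeneracy and transversality requirements of the higher-dimensional incidence theorem --- they do, essentially because complex intersection numbers are finite and controlled by B\'ezout --- and one must carefully excise beforehand every degenerate stratum (the degeneracy loci of the two parametrizations, the repeated-parameter curves, and the pairs of curves sharing a component), each of which the Counting Lemma bounds by $\mathcal{O}(n^{2})$ but which must be removed for the incidence estimate to apply cleanly.
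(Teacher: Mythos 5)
Your reduction to a point--curve incidence count (points $P=X\times X$, curves $\mathcal{F}_{l,m}$, removal of the forbidden parameters via the Counting Lemma, and the duality giving ``two ordinary points lie on $\mathcal{O}(1)$ curves'') matches the setup of the paper, and your target bound $|P|^{2/3+\epsilon}|\mathcal{C}|^{2/3}+|P|+|\mathcal{C}|$ with $|P|,|\mathcal{C}|\le n^{2}$ is exactly what is proved. The gap is in the one step you flag as ``delicate'' and then wave through: you claim the family satisfies the transversality requirement of the Solymosi--Tao theorem ``essentially because complex intersection numbers are finite and controlled by B\'ezout.'' B\'ezout gives \emph{finiteness} of the intersection of two distinct bounded-degree complex curves; it does not give \emph{transversality}. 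Two distinct irreducible complex curves can be tangent at a common point, and for the fibers $\mathcal{F}_{c,c'}$ there is no a priori reason tangencies do not occur; nor can you ``excise beforehand'' the tangent pairs with the Counting Lemma, since tangency is a condition on pairs of curves (together with a point) rather than a small parameter stratum. Since the transversality hypothesis is exactly what Solymosi--Tao use to control incidences with surfaces contained in the zero set of the partitioning polynomial, their theorem cannot be invoked off the shelf here, and this is precisely the point of the chapter: the paper states that its main observation is how to \emph{remove} the transversality condition for this family.

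Concretely, the paper re-runs the Solymosi--Tao polynomial-partitioning induction rather than citing it, and supplies a new argument for the curves $L_{alg}$ lying inside the partition hypersurface $\Sigma=\{Q=0\}$: it first picks a bounded-degree irreducible polynomial $f(x_{1},x_{2},z_{1},z_{2})$ in the ideal of the three-(complex-)dimensional variety $\mathcal{F}$ and discards incidences at singular points of the curves via the Counting Lemma; then, at a smooth point $p$ of the real three-dimensional $\Sigma$, all complex curves through $p$ contained in $\Sigma$ must share a single complex tangent direction $\mathbf{v}_{p}$ (the tangent space of $\Sigma$ at $p$ contains at most one complex line), and the curves through $p=(\mathbf{a}_{i},\mathbf{a}_{j})$ with that tangent correspond to points of $Z\times Z$ on the variety $\{f(\mathbf{a}_{i},\mathbf{a}_{j},z_{1},z_{2})=0\}\cap\{v_{p,2}\partial_{x_{1}}f-v_{p,1}\partial_{x_{2}}f=0\}$, which has complex dimension less than two, so general position of $Z\times Z$ (Counting Lemma) gives only $\mathcal{O}(1)$ such curves and hence $|\mathcal{I}(P\cap\Sigma^{smooth},L_{alg})|=\mathcal{O}(|P|)$. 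Your proposal is missing this entire mechanism, so as written it rests on an incidence theorem whose hypotheses you have not verified and which, for this family, you cannot verify without the extra algebraic argument above. (Your side remark about the real case via Pach--Sharir is fine and agrees with the paper.)
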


Our main observation is that in a three-dimensional real algebraic surface (the zero set of the partition polynomial), we can get rid of the transversality condition in the paper of Solymosi and Tao \cite{solymosi2012incidence}.

\begin{proof}
Using the convention in the above chapter, we assume that the incidences we count happens between ordinary multi-functions and ordinary points. As before, $\mathcal{F}_{c.c'}$ represents the fiber of $\mathcal{F}$ over $(c, c')$, and the fiber of $\mathcal{F}$ over $(\mathbf{c}_{l}, \mathbf{c}_{m})$ is written by $\mathcal{F}_{l,m}$ for short. We do not distinguish an ordinary multi-function $\mathcal{F}_{l,m}$ from$(\mathbf{c}_{l}, \mathbf{c}_{m})$, since one multi-function $\mathcal{F}_{l,m}$ corresponds to at most $\mathcal{O}(1)$ points in $Z \times Z$.

\textbf{First Step.}

The complex dimension of $\mathcal{F}$ is three because $\mathcal{F}_{c,c'}$  is a complex algebraic curve in $\mathbb{C}^{2}$ of dimension one for a generic point $(c,c')\in C\times C$.  There is a constant degree irreducible polynomial $f(x_{1},x_{2},z_{1},z_{2})$ in the ideal of $\mathcal{F}$. So for a fixed $(c,c')\in C\times C$, the zero set of $f_{c,c'}:=f(x_{1},x_{2},c,c')$ contains $\mathcal{F}_{c,c'}$.

If two curves $\mathcal{F}_{c_{1},c_{1}'}$ and $\mathcal{F}_{c_{2},c_{2}'}$ have non transversal intersection at a point $(a,a')\in A\times A$, then the zero sets of $f_{c_{1},c_{1}'}$ and $f_{c_{2},c_{2}'}$ intersect at $(a,a')$ non-transversally. In other words, we must have $\partial_{x_{1}}f_{c_{1},c_{1}'}\partial_{x_{2}}f_{c_{2},c_{2}'} - \partial_{x_{2}}f_{c_{1},c_{1}'}\partial_{x_{1}}f_{c_{2},c_{2}'}=0$. 

One can assume that $X\times X$ contains no singular points of the collection of curves $\{ \mathcal{F}_{l,m}, (\mathbf{c}_{l}, \mathbf{c}_{m})\in Y\times Y\}$ because the following: the collection of all $\{(\mathbf{a}_{i}, \mathbf{a}_{j},\mathbf{c}_{l},\mathbf{c}_{m})\}$ such that $ (\mathbf{a}_{i}, \mathbf{a}_{j})$ is a       singular point of $\mathcal{F}_{l,m}$ is the subset of $\{f=0\} \cap \{\partial_{x_{1}}f=0\}\cap\{\partial_{x_{2}}f=0\} \cap X\times X\times Y\times Y$. The dimension of $\{f=0\} \cap \{\partial_{x_{1}}f=0\}\cap\{\partial_{x_{2}}f=0\}$ is strictly smaller than 3 since $f$ is chosen irreducible, by the Counting Lemma, incidences happen on singular points of curves less than $\mathcal{O}(n^{2})$ times.

\textbf{Second Step.}

We will rule out some but not necessary all transversal intersections as well. We shall adapt the proof of Solymosi-Tao's theorem:

Let $P=X\times X$ denote the collection of points, and let $L$ denote the collection of curves $\mathcal{F}_{l,m}$ for all  $(\mathbf{c}_{l}, \mathbf{c}_{m})\in Z\times Z$. For the sake of simplicity, we shall call any point in $P$ by $p$, and any curve in $L$ by $l$. In the following, $C$ will be a large constant depending on degree $d$ and $\epsilon$,   $C_0, C_1$ and $C_2$ will be positive constants to be chosen later: $C_{0}, C_1 >2$ are sufficiently large depending on $\epsilon$ , $d$ and $C$, and $C_{2}$ will be sufficiently large depending on $C_{1}$, $C_{0}$, and $\epsilon$.

 We perform an induction on $|P|$. Suppose that for $|P'|\leq \tfrac{|P|}{2}$ and $|L'| \leq |L|$, we already have
\begin{equation}\label{hypothesis}
|\mathcal{I}(P',L')|\leq C_{2}|P'|^{\tfrac{2}{3}+\epsilon}|L'|^{\tfrac{2}{3}}+C_{0}(|P'|+|L'|).
\end{equation}

Our goal is to prove
\begin{equation}\label{inductiveclaim}
|\mathcal{I}(P,L)|\leq C_{2}|P|^{\tfrac{2}{3}+\epsilon}|L|^{\tfrac{2}{3}}+C_{0}(|P|+|L|).
\end{equation}

We apply polynomial cell decomposition to $D = C_{1}$ on $\mathbb{C}^2 \simeq \mathbb{R}^{4}$ and obtain a partition:
\begin{equation}
\mathbb{R}^{4}=\{Q=0\}\cup U_{1}\cup\cdots \cup U_{M}.
\end{equation}

Here $Q: \mathbb{R}^{4}\rightarrow\mathbb{R}$ has degree at most $C_{1}$, so $M \sim C_{1}^{4}$ and $|P_{i}| = |P \cap U_{i}| \leq \tfrac{|P|}{2}$ for large $C_{1}$. We denote $L_{i}$ to be the set of curves in $L$ having nonempty intersection with $U_{i}$, and $P_{cell}$ to be the set of points lying in one of the cells, $L_{cell}$ to be the the union of all $L_{i}$. On the other side, let $P_{alg}$ denote the set of points lying on zero set of $Q$, and let $L_{alg}$ denote the set of curves lying completely on zero set of $Q$. By induction hypothesis,
\begin{align*}
|\mathcal{I}(P_{i},L_{i})| &\leq  C_{2}|P_{i}|^{\tfrac{2}{3}+\epsilon}| L_{i}|^{\tfrac{2}{3}}+C_{0}(|P_{i}| + |L_{i}|)\nonumber\\
&\leq  C^{\tfrac{2}{3}+\epsilon} C_{2}C_{1}^{-4(\tfrac{2}{3}+\epsilon)}|P|^{\tfrac{2}{3}+\epsilon}|L_{i}|^{\tfrac{2}{3}}+C_{0}(|P_{i}|+|L_{i}|)).
\end{align*}

For $l$ belonging to some $L_{i}$, we apply a result in real algebraic geometry which implies the number of connected components of $l\setminus \{Q=0\}$ is at most $CC_{1}^{2}$ (see Theorem A.2 of \cite{solymosi2012incidence}). From which we have
\begin{equation}
\sum_{i=1}^{M}|L_{i}| \leq CC_{1}^{2}|L|
\end{equation}

Add up $|\mathcal{I}(P_i,L_{i})|$ and apply H\"{o}lder Inequality, we obtain the following estimation:
\begin{align*}
|\mathcal{I}(P_{cell},L_{cell})| &=  \sum_{i=1}^{M}|\mathcal{I}(P_{i},L_{i})|\nonumber\\&
\leq  C^{\tfrac{2}{3}+\epsilon}C_{2}C_{1}^{-4(\tfrac{2}{3}+\epsilon)}|P|^{\tfrac{2}{3}+\epsilon}(\sum_{i=1}^{M}|L_{i}|^{\tfrac{2}{3}})+C_0 (|P|+CC_{1}^{2}|L|))\nonumber\\ &
\leq C^{\tfrac{2}{3}+\epsilon}C_{1}^{-4\epsilon}C_{2}|P|^{\tfrac{2}{3}+\epsilon}|L|^{\tfrac{2}{3}}+C_{0}(|P|+CC_{1}^{2}|L|))
\end{align*}

Now we establish two initial bounds from the fact that  every two curves intersect in at most $\mathcal{O}(1)$ points, and there are at most $\mathcal{O}(1)$ curves  through every pair of points.  Suppose that the implicit constants here are smaller than $C$: 

\begin{equation}\label{L2+P}
|\mathcal{I}(P,L)|\leq C |L|^{2}+|P|.
\end{equation}

\begin{equation}\label{PA+L}
|\mathcal{I}(P,L)|\leq C |P|^{2}+|L|.
\end{equation}

Hence we may assume that $|P|^{\tfrac{1}{2}}\leq|L|\leq|P|^{2}$, otherwise $|\mathcal{I}(P,L)|\lesssim_{d}|P|+|L|$ and it suffices to choose $C_0$ bigger than the implicit constant. With this assumption,

\begin{equation}\label{PcellLcell}
|\mathcal{I}(P_{cell},L_{cell})| \leq  (C^{\tfrac{2}{3}+\epsilon}C_{1}^{-4\epsilon}C_{2}+C_{0}|P|^{-\epsilon}+CC_{1}^{2}|P|^{-\epsilon})|P|^{\tfrac{2}{3}+\epsilon}|L|^{\tfrac{2}{3}}.
\end{equation}

Choose $C_{1}$ so that $C^{\tfrac{2}{3}+\epsilon}C_{1}^{-4\epsilon}$ is smaller than $\tfrac{1}{2}$, and it is enough to conclude the induction with a sufficiently large $C_{2}$ provided that
\begin{equation}\label{PalgL}
\mathcal{I}(P_{alg}, L)\lesssim_{C_1}|P|^{\tfrac{2}{3}}|L|^{\tfrac{2}{3}} + |P|+|L| 
\end{equation}

\textbf{Third Step.} 
Write $D=C_{1}$ and $\Sigma =\{Q=0\}$. By an algebraic geometry result (see for example corollary 4.5 in \cite{solymosi2012incidence}), we can decompose $\Sigma$ into smooth into smooth points on subvarieties: 
\[\Sigma = \Sigma^{smooth}\cup\Sigma_{i}^{smooth}.\]

Here $\Sigma_{i}$ are subvarieties in $\Sigma$ of dimension smaller than 3 and degree $\mathcal{O}_{D}(1)$, and the number of $\Sigma_{i}$'s is at most $\mathcal{O}_{D}(1)$.  It suffices to show that for any algebraic variety $\Sigma$ of dimension smaller or equal 3, degree at most $\mathcal{O}_{D}(1)$, the intersection happens on its smooth points is bounded by $|\mathcal{I}(\Sigma^{smooth}\cap P, L)| \lesssim_{C_{1}} |P|^{\tfrac{2}{3}}|L|^{\tfrac{2}{3}}+|P|+|L|$

Let $L_{alg}$ denote the set of curves in $L$ that lie on $\Sigma$, let $L_{non-alg}$ denote the set of curves not lying on $\Sigma$ but have non-empty intersection. We first estimate $|\mathcal{I}(\Sigma^{smooth}\cap P, L_{non-alg})|$.

\textbf{Estimation of incidences on $L_{non-alg}$}

If $l$ does not lie in $\Sigma$, then by corollary 4.5 of \cite{solymosi2012incidence} again, the intersection between $l$ and $\Sigma$ has only constantly many components:  $l\cap\Sigma=\cup_{j=0}^{J}l_{j}$ for some $J = O_{D}(1)$ (we allow empty component to achieve a uniform number of components).  And for each $1\leq j\leq J$, $l_{j}$ is an algebraic variety of (real) dimension smaller than one and of degree $O_D (1)$. Let  $\mathcal{I}_{j}$ denote the incidences between points and $j$th component of $l$, for all $l\in L_{non_alg}$. So we only need to bound each $\mathcal{I}_{j}$ separately, and 
\begin{equation}
|\mathcal{I}(P\cap \Sigma^{smooth}, L_{non-alg})|\leq \sum_{ j \leq J} |\mathcal{I}_{j}|
\end{equation}
Notice that for each $l_{j}$, if it is not the union of $\mathcal{O}_{D}(1)$ points, then it belongs to a unique $l$ because the intersection of $l\in L$ and $l'\in L$ has dimension 0. Now take a generic projection from $\mathbb{R}^{4}$ to $\mathbb{R}^{2}$, and use the Pach-Sharir bound on real plane curves, we have 
\[|\mathcal{I}(P\cap \Sigma^{smooth}, L_{non-alg})|\lesssim_{D} |P|^{\tfrac{2}{3}}|L|^{\tfrac{2}{3}}+|P|+|L|.\]

\textbf{Estimation of incidences on $L_{alg}$}

We will show that through each point $p\in P\cap \Sigma^{smooth}$, there are at most $\mathcal{O}_{D}(1)$ curves in $L_{alg}$ passing through it. This is the new ingredient we add to Solymosi and Tao's proof \cite{solymosi2012incidence}. Fix a point $p\in P\cap \Sigma^{smooth}$, if there are two curves $l_{1}$ and $l_{2}$ of $L_{alg}$ intersect at $p$, then the intersection cannot be transversal on the three dimensional variety $\Sigma$.   
On the other side, $l_{1}$ and $l_{2}$ are complex curves, they intersect non-transversally means that they have the same complex tangent vector on $p$. So for a given $p$, all complex curves in $L_{alg}$ passing through it share a common complex tangent vector $\mathbf{v}_{p}$. 

Back to our assumptions, $p $ is of the form $(\mathbf{a}_{i}, \mathbf{a}_{j})\in X\times X$, and $l$ is of the form $\mathcal{F}_{l,m}$, which is also a component of $\{f(x_{1}, x_{2}, \mathbf{c}_{l},\mathbf{c}_{m})=0\}$ in \textbf{First Step}. 

The number of curves $l$ passing through $p=(\mathbf{a}_{i},\mathbf{a}_{j})$ with an underlying complex tangent vector $\mathbf{v}_{p}=(v_{p,1}, v_{p,2}) $ is smaller than the number of points of $Z\times Z$ lying on the algebraic variety parametrized by the following equations:
\begin{align*}
&f(\mathbf{a}_{i},\mathbf{a}_{j}, z_{1}, z_{2})=0,\\
&v_{p,2} \partial_{x_{1}}f(\mathbf{a}_{i}, \mathbf{a}_{j}, z_{1}, z_{2})-v_{p,1}\partial_{x_{2}}f(\mathbf{a}_{i},\mathbf{a}_{j}, z_{1},z_{2})=0;
\end{align*}

As $f$ is assumed to be irreducible, the algebraic variety parametrized by the above equations is of complex dimension less than two. We apply the Counting Lemma and conclude that there are at most $\mathcal{O}(1)$ points of $Z\times Z$ lying on it. 

From the above arguments, there are no more than $\mathcal{O}(1)$ curves in $L_{alg}$ passing through every point $p\in P$, and we have the estimation:
\[|\mathcal{I}(P\cap \Sigma^{smooth}, L_{alg})|\leq \mathcal{O}(1)|P|.\]
Adding them up we obtain :
\[|\mathcal{I}(P\cap \Sigma^{smooth}, L)|\lesssim_{C_{1}}|P|^{\tfrac{2}{3}}|L|^{\tfrac{2}{3}}+|P|+|L|\]
which concludes our induction.

\end{proof}
%Use Josh Zahl's very recent paper, we could improve the complex bound to $\tfrac{1}{6}$ with the same observation on Theorem 3. 
%\begin{thm}
%With all the notations as above, the exponent in Theorem 3 could be improved to $\tfrac{1}{6}$.
%\end{thm}
%
%\begin{proof}
%Without loss of generality, we may assume that $P=X\times X$ contains no singular point of $L=\{\mathcal{F}_{l,m}\}$. Fixed a point $p\in P$ and a direction $\mathbf{v}$, there are at most $\mathcal{O}(1)$ many curves in $L$ passing through $p$ along direction $\mathbf{v}$ (see previous arguments in the proof of Theorem 3). Then we can see $L$ as $\mathcal{O}(1)|L|$ curves, and distribute it into 
%\end{proof}

\chapter{Incidence Theorems from Elekes-Szab\'{o}}\label{section: incidence theorem}
In this chapter, we will introduce the notion of combinatorics dimension, a generalization of certain types of incidence problems: points-curves incidences, points-varieties incidences, etc. 
\begin{definition}
For a fixed constant $b$, let $G \subset S\times T$ be a finite bipartite graph. We say that $S$ has combinatorial dimension 0 in $G$ if $S$ has at most $b$ vertices. In general, for $k\geq 1$, $S$ has combinatorial dimension at most $k$, if there is a subset $T' \subset T$ such that 

(i) $T'$ is ``almost the whole" of $T$, ie. $|T\setminus T'| \leq b$, and

(ii) for all $t\in T'$ the subset $S_{t}$ has combinatorial dimension at most $k-1$ in the subgraph $G(S_{t}, T'\setminus {t})$.
\end{definition}

\begin{remark}
At first glance, one might find it a bit abstract to understand this recursive definition. We might have more intuition with these two examples: incidences between points and degree $d$ algebraic curves, which is of combinatorics dimension 2; and complete bipartite subgraph, which should have large combinatorics dimension.
\end{remark}

We give some basic properties of combinatorics dimension without proof(use induction):

\begin{proposition}
Let $G\subset S\times T$ be a bipartite graph and assume that $S$ has combinatorial dimension $k\geq 1$. Then:

(i) in each subgraph of $G$ the corresponding subset of $S$ has combinatorial dimension at most $k$.

(ii) each complete bipartite subgraph of $G$ has at most $\mathcal{O}(|S|+|T|)$ edges, and

(iii) $G$ has at most $\mathcal{O}(|S|+|S|^{1-\tfrac{1}{k}}|T|)$ edges.

The constants in these big-$O$'s expressions depend on $k$ and $b$, but not on the graph $G$.
\end{proposition}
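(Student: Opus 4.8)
The plan is to prove (i), (ii) and (iii) simultaneously by induction on the combinatorial dimension $k$. Throughout, fix a set $T'\subseteq T$ witnessing that $S$ has combinatorial dimension at most $k$: so $|T\setminus T'|\le b$ and, for every $t\in T'$, the neighbourhood $S_t=\{s\in S:(s,t)\in G\}$ has combinatorial dimension at most $k-1$ in the graph $G_t:=G(S_t,T'\setminus\{t\})$. The base case $k=0$ of all three statements is trivial, since then $|S|\le b$. For the inductive step the common mechanism is to discard the at most $b$ ``bad'' vertices $T\setminus T'$ and, for a good vertex $t$, to apply the inductive hypothesis inside $G_t$, where $S_t$ has combinatorial dimension at most $k-1$.

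Part (i) is the easiest: if $G'$ is a subgraph of $G$ on $S'\times T_0$, then $T'\cap T_0$ omits at most $b$ vertices of $T_0$, the neighbourhoods computed in $G'$ are contained in those computed in $G$, and $G'\big((S')_t,(T'\cap T_0)\setminus\{t\}\big)$ is a subgraph of $G_t$; so the inductive hypothesis applied to $G_t$ shows these neighbourhoods have combinatorial dimension at most $k-1$ in the relevant subgraph, hence $S'$ has combinatorial dimension at most $k$ in $G'$. For (ii), given a complete bipartite subgraph on $S_0\times T_0$, I would split $T_0=(T_0\cap T')\sqcup(T_0\setminus T')$; the second part contributes at most $b|S_0|\le b|S|$ edges, and if the first part is nonempty I fix $t_0\in T_0\cap T'$, observe $S_0\subseteq S_{t_0}$, and note that $S_0\times\big((T_0\cap T')\setminus\{t_0\}\big)$ is a complete bipartite subgraph of $G_{t_0}$, so by induction $|S_0|\cdot|(T_0\cap T')\setminus\{t_0\}|=\mathcal{O}(|S_{t_0}|+|T'|)=\mathcal{O}(|S|+|T|)$; restoring $t_0$ and the $\le b$ vertices of $T_0\setminus T'$ gives $|S_0|\,|T_0|=\mathcal{O}(|S|+|T|)$.

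The substantive part is (iii). The base case $k=1$ is direct: combinatorial dimension $0$ of each $S_t$ forces $|S_t|\le b$ for $t\in T'$, so $G$ has at most $b|T|$ edges meeting $T'$ and at most $b|S|$ edges meeting $T\setminus T'$. For $k\ge2$, write $e'$ for the number of edges of $G$ meeting $T'$ and $d'(s)=|\{t\in T':(s,t)\in G\}|$, so $\sum_{s\in S}d'(s)=e'$. The heart of the argument is the double count
\[
\sum_{t\in T'}|E(G_t)|=\#\{(s,t,t'):t\ne t'\text{ in }T',\,(s,t),(s,t')\in G\}=\sum_{s\in S}d'(s)\big(d'(s)-1\big).
\]
Applying the inductive hypothesis to each $G_t$ (in which $S_t$ has combinatorial dimension at most $k-1$) gives $|E(G_t)|\le C_{k-1}\big(|S_t|+|S_t|^{1-1/(k-1)}|T|\big)$; summing over $t\in T'$ and using Hölder in the form $\sum_{t\in T'}|S_t|^{1-1/(k-1)}\le|T'|^{1/(k-1)}\big(\sum_{t\in T'}|S_t|\big)^{1-1/(k-1)}\le|T|^{1/(k-1)}(e')^{1-1/(k-1)}$ yields $\sum_{s}d'(s)(d'(s)-1)\le C_{k-1}\big(e'+|T|^{k/(k-1)}(e')^{(k-2)/(k-1)}\big)$, while Cauchy--Schwarz gives $\sum_{s}d'(s)(d'(s)-1)\ge (e')^{2}/|S|-e'$.

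Combining these, $(e')^{2}/|S|\le (C_{k-1}+1)e'+C_{k-1}|T|^{k/(k-1)}(e')^{(k-2)/(k-1)}$, and a dichotomy concludes the proof: if the linear term dominates then $e'=\mathcal{O}(|S|)$; otherwise, absorbing it, $(e')^{k/(k-1)}\le 2C_{k-1}|S|\,|T|^{k/(k-1)}$, so $e'\le(2C_{k-1})^{1-1/k}|S|^{1-1/k}|T|$. Adding the at most $b|S|$ edges meeting $T\setminus T'$ gives $|E(G)|=\mathcal{O}(|S|+|S|^{1-1/k}|T|)$ with a constant depending only on $k$ and $b$. I expect the main obstacle to be exactly the exponent bookkeeping in this last step: the crude estimate $\sum_{t\in T'}|S_t|^{1-1/(k-1)}\le|T|\,|S|^{1-1/(k-1)}$ is too lossy (it would only yield the exponent $1-1/(2(k-1))$, which exceeds $1-1/k$ for $k\ge2$), so one genuinely needs the Hölder/convexity improvement above, and one must also carry the ``at most $b$'' slack through every level of the recursion.
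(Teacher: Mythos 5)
Your proof is correct: (i) and (ii) follow cleanly from the recursive definition by passing to the witness $T'$ and the graphs $G_t$, and in (iii) the double count $\sum_{t\in T'}|E(G_t)|=\sum_{s}d'(s)\big(d'(s)-1\big)$, the H\"older step $\sum_{t\in T'}|S_t|^{1-1/(k-1)}\le |T'|^{1/(k-1)}(e')^{1-1/(k-1)}$, and the closing dichotomy all check out, with the exponent arithmetic $2-\tfrac{k-2}{k-1}=\tfrac{k}{k-1}$ giving exactly $|S|^{1-1/k}|T|$ and constants depending only on $k$ and $b$. The paper states this proposition without proof (offering only the hint ``use induction''), and your argument is precisely that intended induction, a K\H{o}v\'ari--S\'os--Tur\'an-type double count at each level; you have simply supplied the details the paper omits.
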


\begin{theorem}{ $\mathbf{(Incidence} $ $ \mathbf{Theorem)}$}\label{thm: incidence theorem}
Let there be given a family of algebraic subsets $\mathcal{F}$ of a complex projective space $\mathbb{C}P^{N}$, parametrized by an algebraic set $Y$ (of some other projective space). Let $\mathcal{V}$ be a finite sub-collection from this family, and $\mathcal{P}$ a finite point set of combinatorial dimension $k$ with respect to $\mathcal{V}$. Then there exists a constant $D = 4\cdot\dim(Y) -4>0$ such that, for any $\epsilon$ with 
\[0<\epsilon<\frac{k-1}{k(Dk-1)}\]
and values
\[ \alpha := \frac{D(k-1)}{Dk-1}-\epsilon;\]
\[\beta := k(1-\alpha)= \frac{k(D-1)}{Dk-1}+k\epsilon,\]
we have 
\[I(\mathcal{P}, \mathcal{V})=\mathcal{O}\big( |\mathcal{P}|^{\alpha}|\mathcal{V}|^{\beta}+|\mathcal{P}|+|\mathcal{V}|\log(2|\mathcal{P}|) \big),\]
\end{theorem}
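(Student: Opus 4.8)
The plan is to prove the Incidence Theorem by the same polynomial cell-decomposition induction used in Chapter~\ref{section: complex}, but carried out in the abstract setting of combinatorial dimension $k$ and varieties in $\mathbb{C}P^N$ parametrized by $Y$. First I would set up a double induction: an outer induction on the combinatorial dimension $k$ (the base case $k=0$ being trivial, since then $|\mathcal{P}| \leq b$ and $I(\mathcal{P},\mathcal{V}) \leq b|\mathcal{V}|$, which is absorbed by the $|\mathcal{V}|$ term), and, for fixed $k$, an inner induction on $|\mathcal{P}|$ to establish the bound $I(\mathcal{P},\mathcal{V}) \leq C_2 |\mathcal{P}|^\alpha |\mathcal{V}|^\beta + C_0(|\mathcal{P}| + |\mathcal{V}|\log(2|\mathcal{P}|))$ with constants chosen in the right order ($C$ depending on the degree data of $\mathcal{F}$ and on $\epsilon$; then $C_0, C_1$ large depending on these; then $C_2$ largest of all). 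One should first dispose of the degenerate ranges: using the combinatorial-dimension bound (iii) of the Proposition, if $|\mathcal{V}|$ is too small or too large relative to $|\mathcal{P}|$ the desired estimate follows directly from $\mathcal{O}(|\mathcal{P}| + |\mathcal{P}|^{1-1/k}|\mathcal{V}|)$, so we may assume $|\mathcal{P}|$ and $|\mathcal{V}|$ are polynomially comparable.

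Next, realize $\mathbb{C}P^N \simeq \mathbb{R}^{2N}$ and apply the polynomial partitioning theorem with a degree parameter $D_0 = C_1$, obtaining a real polynomial $Q$ of degree $\lesssim C_1$ with $\mathbb{R}^{2N} = \{Q=0\} \cup U_1 \cup \cdots \cup U_M$, $M \sim C_1^{2N}$, and $|\mathcal{P} \cap U_i| \leq |\mathcal{P}|/C_1^{2N} \cdot \mathcal{O}(1) \leq |\mathcal{P}|/2$. For the cellular part, each cell's point set has smaller cardinality so the inner inductive hypothesis applies; the key quantitative input is that a variety from $\mathcal{F}$ (which is cut out by $\mathcal{O}(1)$ equations of bounded degree depending only on $\mathcal{F}$, whence the constant $D = 4\dim(Y) - 4$ enters) meets only $\mathcal{O}_{D,C_1}(C_1^{\text{const}})$ cells — by a Bézout/Milnor--Thom count on connected components, exactly as in Theorem~A.2 of \cite{solymosi2012incidence}. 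Summing the inductive bounds over cells and applying Hölder, the cellular contribution is at most $(\text{const}\cdot C_1^{-c\epsilon} C_2 + \text{lower order})|\mathcal{P}|^\alpha |\mathcal{V}|^\beta$, and choosing $C_1$ large kills the leading constant below $\tfrac12$. The precise exponent $\alpha = \tfrac{D(k-1)}{Dk-1} - \epsilon$ is forced by balancing this cellular gain against the algebraic-part loss, and this is where $D = 4\dim Y - 4$ comes from.

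For the algebraic part $\mathcal{P}_{\mathrm{alg}} = \mathcal{P} \cap \{Q=0\}$, I would stratify $\{Q=0\}$ into smooth loci of subvarieties of dimension $< 2N$ and bounded degree (corollary~4.5 of \cite{solymosi2012incidence}), and on each stratum split $\mathcal{V}$ into those varieties contained in the stratum ($\mathcal{V}_{\mathrm{alg}}$) and those not ($\mathcal{V}_{\mathrm{non-alg}}$). For $\mathcal{V}_{\mathrm{non-alg}}$, each variety meets the stratum in $\mathcal{O}_D(1)$ lower-dimensional pieces, and a generic projection reduces the count to a lower-dimensional incidence problem handled by the outer induction on $k$ (or directly when the pieces are $0$-dimensional). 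For $\mathcal{V}_{\mathrm{alg}}$ one must show the induced incidence structure still has combinatorial dimension $\leq k$ (inherited as a subgraph, by property (i) of the Proposition) and recurse on the lower-dimensional ambient variety, which decreases $\dim Y$ — here the $\log(2|\mathcal{P}|)$ factor accumulates, one logarithm per level of the dimensional recursion, explaining its presence in the statement. The main obstacle I anticipate is the bookkeeping of this triple recursion (on $k$, on $|\mathcal{P}|$, and on the dimension of the ambient/parameter variety) while keeping all constants uniform and in the correct dependency order, and in particular verifying that the combinatorial-dimension hypothesis is preserved under restriction to a stratum and under generic projection — that is the one place where the abstract notion of combinatorial dimension, rather than a concrete points-curves picture, genuinely has to be used.
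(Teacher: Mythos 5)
There is a genuine gap, and it sits exactly where your sketch waves its hands: the constant $D=4\dim(Y)-4$. You partition the \emph{primal} space $\mathbb{C}P^N\simeq\mathbb{R}^{2N}$ (where the points of $\mathcal{P}$ live) by a polynomial of degree $C_1$. In such a partition the number of cells is of order $C_1^{2N}$ and the number of cells crossed by a member of $\mathcal{V}$ is governed by $N$, the degree, and the dimension of that variety --- none of which involves $\dim(Y)$. The balancing you invoke would therefore produce exponents depending on the ambient dimension $N$ (or at best on the dimension of the varieties), whereas the theorem asserts exponents depending only on the parameter space through $D=4\dim(Y)-4$; your claim that $D$ ``enters'' because each variety is cut out by boundedly many bounded-degree equations supplies no mechanism for this. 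The paper's proof gets $D$ by a step you are missing entirely: it \emph{dualizes}. Each variety of $\mathcal{V}$ is replaced by its parameter point in $Y$, and each point $p\in\mathcal{P}$ by the algebraic subset $\{y\in Y: p\in y^{*}\}$ of $Y$; then $Y$ is generically projected into $\mathbb{R}^{2\dim(Y)-1}$, and the Chazelle cutting lemma \cite{chazelle1991singly} partitions that space into at most $r^{4\dim(Y)-4}$ parts, each crossed by $\mathcal{O}(|\mathcal{P}|\log r/r)$ of the dual sets. The induction is then on the number of dual sets (i.e.\ on $|\mathcal{P}|$), the degenerate ranges and the cells wholly contained in a dual set are handled by the combinatorial-dimension proposition, and the term $|\mathcal{V}|\log(2|\mathcal{P}|)$ comes out of that single induction, not from a recursion on ambient dimension.

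Two further points in your algebraic-part treatment reinforce that the primal framing cannot be patched locally. First, you propose to ``recurse on the lower-dimensional ambient variety, which decreases $\dim Y$'': restricting the points to a stratum of $\{Q=0\}$ lives in the space of $\mathcal{P}$ and does not change the parameter variety $Y$ at all, so this recursion does not exist in the form you describe. Second, in the concrete complex-curve case of Chapter~\ref{section: complex} the varieties contained in the partition hypersurface were controlled by a tangency/transversality argument specific to curves in $\mathbb{C}^2$; in the abstract setting of this theorem you have no analogue, and the paper avoids needing one precisely because in the dual picture a dual set containing a whole cell is a complete bipartite piece, which the combinatorial-dimension proposition bounds by $\mathcal{O}(|\mathcal{P}|+|\mathcal{V}|)$. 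So the missing idea is the passage to the parameter space $Y$ (whether you then cut it by the classical cutting lemma, as the paper does, or by a polynomial partition); without it the stated exponent, which is independent of $N$, is not reachable by the balance you describe.
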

The constant of this big-$O$'s expression depends on $b$, $k$, $\epsilon$, $\dim(Y)$, $deg(Y)$, $N$, and the maximum degree of the members of the family (which is finite in each algebraic family).

\begin{remark}
The constant $D$ depends on the fact that $Y$ can be generically imbedded in $\mathbb{R}^{\tfrac{D+2}{2}}$. If the parameter space is simple, we may obtain better constant $D$.
 For example, when $Y=\mathbb{R}^{2}$ (resp. $\mathbb{C}^{2}$), $D$ can be reduced to 2 (resp. 6).
\end{remark}

\begin{proof}

\textbf{First Step. } We represent every algebraic set in the family $\mathcal{F}$  by the corresponding point in parameter space $Y$,  and every point $y \in Y$ corresponds to the algebraic set $y*$ in family $\mathcal{F}$.  And to each point $p\in P$, we assign the corresponding algebraic subset of $Y$ as following:
\begin{align*}
 \mathcal{V}\in & \mathcal{F}  \longleftrightarrow Y &   &\mathcal{P} \longleftrightarrow \{\text{algebraic subset}\} \\
& y \leftrightarrow  y* &    & p  \leftrightarrow \{y\in Y| p\in y*\}\\
&\mathcal{V} \leftrightarrow T &    & \mathcal{P} \leftrightarrow S
\end{align*}

And we denote by $S$ the set of algebraic subsets assigned to points in $\mathcal{P}$, and by $T$ the set of points of $Y$  assigned to the original algebraic sets in the family $\mathcal{F}$. We consider the dual point-algebraic sets incidence problem.

\textbf{Second Step}. Given $s$ hyperplanes in $\mathbb{R}^{d}$ and any positive integer $r<s$, the $\mathbb{R}^{d}$ space can be subdivided (semi-cylindrical decomposition) into $leq r^{d}$ parts such that each part is cut by $B (s \log r /r)$ of the algebraic subsets, for certain constant $B$ depending on $d$. (\cite[Theorem 4.2]{chazelle1991singly})

\textbf{Third Step}.
We put $s=|S|$ and $t=|T|$. And we fix a sufficiently large constant $r$ (to be decided later) and apply the \textbf{Second Step}:

For general case, $Y$ is an algebraic set in $\mathbb{CP}^{N}$. Without loss of generality, we may assume that T have no intersection with hyperplane at $\infty$. And we identify $\mathbb{C}^{N}$ with $\mathbb{R}^{2N}$. We write $d=\dim(Y)$ the dimension of $Y$ in $\mathbb{R}^{2N}$ and project $Y$ to $\mathbb{R}^{2d-1}$ generically such that no incidences would be lost, and the algebraic subsets in $S$ will become real algebraic sets of dimension at most $2d-2$. 

Then we use cutting lemma in Step two. If $d>1$, the algebraic subsets in $S$ lie in $\mathbb{R}^{2d-1}$ and we divide the space $\mathbb{R}^{2d-1}$ into $\leq r^{4d-4}$ parts. In the following, we write $D$ as the parameter appearing in cutting lemma that the space is divided into $\leq r^{D}$ parts. In the most general case, we have $D=4d-4$; while in some specific cases, we might get better $D$. For example, if $d\leq 1$, then we can use cutting lemma for hyperplanes and $D=d$.

In all cases we have a decomposition into $r^{D}$ parts, and each part is cut by $\mathcal{O}(s \log r /r)$ of the algebraic subsets in $S$: $\mathbb{R}^{d}=\sqcup_{i=1}^{M} \mathcal{C}_{i}$.And we call each $\mathcal{C}_{i}$ a cell. 

\textbf{Fourth Step.}
Recall that $r$ is a sufficiently large constant to be determined. If s is very small or very large regarding $r$, our bound 

          $I(\mathcal{P}, \mathcal{V})=I(S,T) \leq C'(|S|^{\alpha}|T|^{\beta} +|S|+|T|\log(2|S|)) $

is valid for certain choice of $C'$ depending on $r$:

\textbf{(1)} If $s\leq r$, then we have $I(S,T) \leq st \leq rt$ when $C' \geq r$. 

\textbf{(2)} If $s\geq r^{-\tfrac{D}{1-\alpha}}t^{k}$, then as it is shown in Proposition, $I(S,T)\leq \mathcal{O}(|S|+|S|^{1-\tfrac{1}{k}}t)\leq \mathcal{O}(1+r^{\tfrac{D}{\beta}})s$. It is enough to choose $C'$ larger than some constant times $1+r^{\tfrac{D}{\beta}}$, which is still a constant.

\textbf{Fifth Step.}
For general $s$ between $r$ and $r^{-\tfrac{D}{1-\alpha}}t^{k}$, then $s\leq r^{-D}s^{\alpha}t^{\beta}$, i.e. $s$ is dominated by $ r^{-D}s^{\alpha}t^{\beta}$. We distribute points in $T$ to each cell $\mathcal{C}_{i}$ containing it, then $T=\sqcup_{i=1}^{M} T_{i}$, write $t_{i}=|T_{i}|$. And we assign each cell $\mathcal{C}_{i}$ with those algebraic subsets in $S$ cutting it (no empty intersection but not containing); we call this set of algebraic subsets $S_{i}$, and $s_{i}:=|S_{i}|\leq B s \log r/r$.  

Choose $r$ large enough such that $B \log r /r$ is smaller than $\tfrac{1}{2}$, then have $s_{i}<\tfrac{1}{2}$. Now we can use induction on $s$(we already discussed the case where $s$ is very small regarding $r$). Suppose that for any collection of algebraic subsets $S_{0}$ and any finite collection of points $T_{0}\subset Y$, $t_{0}=|T_{0}|$,  such that $s_{0}=|S_{0}| < \tfrac{1}{2}s$, our bound is valid:
$I(S_{0},T_{0})\leq C'(s_{0}^{\alpha}t_{0}^{\beta}+s_{0}+t_{0}\log(2s_{0}))$

There are two possibilities for point-subvariety intersection: 

1) the given point and given subvariety are assigned to the same cell, for which the subvariety should cut this cell. And this kind of incidences is bounded by $I(S_{i}, T_{i})$;

2) the given subvariety contains a certain whole cell $\mathcal{C}_{i}$ (remember that it is not "assigned" to $\mathcal{C}_{i}$), and by Proposition, there are at most $\mathcal{O}(s_{i}+t_{i}) \leq \mathcal{O}(s+t)$ intersections. 

Now we could sum up all incidences that happen in each cell and incidences for those subvarieties that contain some cells:
\begin{align*}
I(S,T)=&\sum_{i=1}^{M}I(S_{i}, T_{i}) +r^{D}\mathcal{O}(s+t)\\
	\leq& C' \sum_{i=1}^{M}(s_{i}^{\alpha}t_{i}^{\beta}+s_{i}+t_{i}\log(2s_{i}))+r^{D}\mathcal{O}(s+t)\\
	\leq &C'(Bs\tfrac{\log{r}}{r})^{\alpha}\sum_{i=1}^{M}t_{i}^{\beta}+C'M(Bs\tfrac{\log{r}}{r})+C't\log(2B\tfrac{s\log{r}}{r})+r^{D}\mathcal{O}(s+t)\\
	\leq &C'(B \tfrac{\log{r}}{r})^{\alpha}M^{1-\beta} s^{\alpha}t^{\beta} +C' MB\tfrac{\log{r}}{r}s+C't\log(2s)+C't\log(B\tfrac{\log{r}}{r})+r^{D}\mathcal{O}(s+t)\\
	\leq &C'(B\tfrac{\log{r}}{r})^{\alpha}r^{D(1-\beta)}s^{\alpha}t^{\beta}+C'r^{D}B\tfrac{\log{r}}{r}r^{-D}s^{\alpha}t^{\beta} +C't\log(2s)-C't\\
	\leq &C'[(B\tfrac{\log{r}}{r})^{\alpha}r^{D(1-\beta)}+B\tfrac{\log{r}}{r}]s^{\alpha}t^{\beta}+C't\log(2s)+\mathcal{O}(r^{D})s+(\mathcal{O}(r^{D})-C')t
\end{align*}
The above calculations use the assumption: $s\leq r^{-D}s^{\alpha}t{\beta}$, and$B \tfrac{\log{r}}{r}\leq \tfrac{1}{2}$; together with the result of cutting lemma: $M\leq r^{D}$ and $s_{i}\leq B s \tfrac{\log{r}}{r}$.

Finally to close the induction, it is enough to choose $r$ large enough such that $(B\tfrac{\log{r}}{r})^{\alpha}r^{D(1-\beta)}+B\tfrac{\log{r}}{r}$ is smaller than 1, which is possible because $D(1-\beta)-\alpha < 0$, and then choose $C'$ larger than $\mathcal{O}(r^{D})$.
 
\end{proof}

In this incidence theorem, we do not assume transversality. And the above case 2) is where we deal with non-transversal intersections.

\begin{remark}
Very recently, Solymosi and de Zeeuw proved sharp incidence bounds in Cartesian products, using their bounds we can get rid of the $\epsilon$ in the exponent. 
\end{remark}
\begin{remark}
The proof of the Incidence Theorem uses traditional partition technique ``semi-cylindrical decomposition". Still very recently, Jacob Fox and al. proved a similar result on incidences without $K_{s,t}$ graph using new trend ``polynomial partition".
\end{remark}

\chapter{Composition Lemma}\label{section: composition lemma}
This chapter includes a proof of Composition Lemma following the structure of Elekes Szab\'{o}'s original proof with more algebraic geometry details. Many of the following claims we are going to use are known in algebraic geometry, however, we are going to prove them here for the sake of completeness. The author owes Cao Yang a lot for providing references and proofs. And this Composition Lemma is a special case of ``Group Configuration Theory" \cite{hrushovski1986contributions}. Let's recall the statement of Composition Lemma:

\begin{lemma}{$\mathbf{(Composition}$ $\mathbf{ Lemma)}$}
Given $n$-dimensional algebraic varieties $A$, $B$ and $C$. Suppose there are two $k$-dimensional families of multi-functions, $(F_{t}: A \rightarrow B, t\in T)$ and $(G_{s} :B\rightarrow C, s\in S)$, such that the family of compositions $(G_{s}\circ F_{t}: A \rightarrow C, (t,s)\in T\times S)$ have a common component with a $k$-dimensional family. Then there is a $k$-dimensional connected algebraic group $\Gamma$ acting on a variety $V$, and multi-functions $(\alpha: A \rightarrow V), (\beta: B\rightarrow V)$ and $(\gamma: C\rightarrow V)$ such that the family $F$ is related to the standard family corresponding to the $\Gamma$-action on $V$ along $\alpha$ and $\beta$, and the family $G$ is related to it along $\beta$ and $\gamma$. Moreover, the degrees of $\alpha$, $\beta$ and $\gamma$ can be bounded in terms of the degrees of the generic members $F_{t}$ and $G_{s}$.

\begin{displaymath}
	\xymatrix{ &&G_{s}\circ  F_{t} \ar[dl] \ar[dr]\\
		&F_{t}\ar[dl] \ar[dr] && G_{s}\ar[dl]\ar[dr]\\
		A \ar[d]_{\alpha}&\times &B \ar[d]_{\beta}&\times &C\ar[d]_{\gamma} \\
		V\ar[rr]_{g\in\Gamma} & &V \ar[rr]_{g'\in\Gamma}&&V}
\end{displaymath}
\end{lemma}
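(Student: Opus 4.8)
The plan is to construct the group $\Gamma$ as a subquotient of the composition family itself, exploiting the ``associativity'' hidden in the hypothesis. First I would reformulate everything in the Hilbert scheme: let $R_F \subset \mathcal{H}(A\times B)$ and $R_G \subset \mathcal{H}(B\times C)$ be the closures of the rational images of $T$ and $S$, so (after passing to $R_F$, $R_G$ and replacing $F$, $G$ by the tautological families over them) each multi-function is parametrized exactly once, and $\dim R_F = \dim R_G = k$. The hypothesis says the composition family $\{G_s \circ F_t\}$, a priori parametrized by the $2k$-dimensional $T\times S$, has a component that is only $k$-dimensional as a family; equivalently the rational map $T\times S \dashrightarrow \mathcal{H}(A\times C)$ drops rank by $k$ on that component, so its generic fibers are $k$-dimensional. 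This is the key geometric input: generically, the set of pairs $(t,s)$ producing a fixed composition $G_s\circ F_t = M$ is $k$-dimensional, which lets us define a partially-defined ``division'' — given $M$ and a generic $t$, there is (generically finitely many, hence after a choice) an $s$ with $G_s \circ F_t = M$.

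Next I would build $V$ and the group. Take $V$ to be (a component of) the family $H_{\mu}$ obtained by fixing a generic base point, concretely something like $H = \{F_t : t \text{ near } t_0\}$ together with its image — this is the ``$H_{\mu(0,0)}$'' alluded to in the Main Theorem's proof, of dimension $k$. The group $\Gamma$ is then the group of ``translations'' acting on $V$: an element $g\in\Gamma$ should be the operation $F_t \mapsto G_s\circ F_t$ for a compatible pair $(s',s)$ — more precisely, using the $k$-dimensional fibers above one shows the relation $G_s\circ F_t = G_{s'}\circ F_{t'}$ defines an equivalence on pairs that is compatible with composition, and the resulting set of equivalence classes carries a composition law. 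The standard argument (this is precisely the algebraic-geometry incarnation of the Hrushovski group-configuration theorem, \cite{hrushovski1986contributions}) produces from this ``generic group chunk'' a genuine connected algebraic group $\Gamma$ of dimension $k$: one uses Weil's theorem on recovering a group from a generically associative rational multiplication, after checking genericity, associativity up to the finite ambiguities, and that inverses exist (this is where the division step is used). The action on $V$ comes from left composition, and is transitive because $F\to A\times B$ is generically finite and surjective, forcing $\dim V = k = \dim\Gamma$ and $V$ a homogeneous space, hence $\Gamma$ modulo a finite stabilizer.

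With $\Gamma\curvearrowright V$ in hand, the multi-functions $\alpha,\beta,\gamma$ are read off from the three coordinate projections in the composition diagram: $\alpha: A\dashrightarrow V$ sends $a$ to the class recording how $F_t$ acts ``at $a$'', i.e. $\alpha$ is assembled from $\phi_F^{-1}$, $\phi_{F_1}$, $\phi_{\tilde F_1}^{-1}$, $\phi_{\tilde F}$ composed as in the Main Theorem's Case One diagram; since all of $C$, $U$, $R_F$, $R_{\tilde F}$, $\Gamma$ have the same dimension $k$, each of these rational maps is a multi-function (dominant and generically finite), so the composite is too, and its degree is bounded by the product of the degrees, hence by a function of $\deg F_t$, $\deg G_s$. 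One does the symmetric construction on the $C$-side to get $\gamma: C\dashrightarrow V$, and $\beta:B\dashrightarrow V$ from the common middle. Finally one checks that, under these maps, $F$ has a common component with $\beta\circ H\circ\alpha^{-1}$ and $G$ with $\gamma\circ H\circ\beta^{-1}$ — this is essentially the content of how $V$, $\Gamma$, $\alpha$, $\beta$, $\gamma$ were defined, unwound through Definition \ref{def: common component}.

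\textbf{Main obstacle.} The hard part is the group-reconstruction step: turning the ``$k$-dimensional fibers of the composition map'' into an honest associative partial group law and then globalizing it to an algebraic group $\Gamma$ with an algebraic action. The delicate points are (i) controlling the finite ambiguities everywhere (multi-functions, not functions, so every ``choice of $s$ given $t$ and $M$'' is finite-to-one and one must pass to irreducible components / étale covers consistently — this is exactly where Claim \ref{cl: composition lemma trick} is invoked to make fibers irreducible), (ii) verifying generic associativity of the candidate multiplication rather than just a formal identity, and (iii) invoking the Weil/Hrushovski group-chunk theorem in the algebraic category to get $\Gamma$ together with the bound on the degrees of $\alpha,\beta,\gamma$ in terms of the degrees of the generic $F_t$, $G_s$; keeping that bound effective is what forces us to work throughout with bounded-degree Hilbert schemes.
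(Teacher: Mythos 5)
Your overall skeleton does match the paper's: both ultimately feed a generically associative rational multiplication on a $k$-dimensional parameter space into Weil's group-chunk theorem, with $V$ taken (birationally) to be a fiber $H_{\mu(0,0)}$ over a fixed base point and $\alpha,\beta,\gamma$ read off from the projections. But what you defer as ``the main obstacle'' is in fact the body of the paper's proof, and your proposed shortcut around it does not work as stated. First, before the map $T\times S\dashrightarrow\mathcal{H}(A\times C)$ and its ``$k$-dimensional fibers'' are even meaningful, one must arrange that the compositions $G_{s}\circ F_{t}$ are irreducible; Claim~\ref{cl: composition lemma trick} only makes the fibers $F_{t}$, $G_{s}$ irreducible, not their compositions. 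The paper's Step One handles this with a separate Galois-theoretic argument: normalize $F$ and $G$ in the Galois closures of $K(B\times T)\leq K(F)$ and $K(B\times S)\leq K(G)$, prove via semicontinuity that $L=K(\tilde F_{t})\cap K(\tilde G_{s})$ is the same field for all $(s,t)$ in a dense open set, and when $L\neq K(B)$ replace $B$ by the normalization $B'$ of $B$ in $L$, using $\mathrm{Gal}(B'/B)$ to isolate the one component of $G'_{s}\circ F'_{t}$ that moves in a $k$-dimensional family. Nothing in your proposal plays this role.

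Second, your construction of $\Gamma$ as ``equivalence classes of pairs $(t,s)$ with the same composition, with a composition law'' cannot be carried out directly: the classes are parametrized by $R\subset\mathcal{H}(A\times C)$, and two elements $H_{r},H_{r'}$ (multi-functions $A\rightarrow C$) admit no composition with each other until $A$, $B$, $C$ have been identified, while the finite ambiguities of the multi-functions block any single-valued ``division''. The paper eliminates these ambiguities before any group appears: it represents $H_{r}$ and $G_{s}$ by rational maps into symmetric powers $C^{(\delta)}$, $C^{(\gamma)}$ and replaces $C$ and $A$ by image components $C^{*}$, $A^{*}$ so that $G_{s}$ and $F_{t}^{-1}$ become graphs of rational maps (Step Three), passes to maximal finite covers (Step Four), and then fixes origins $0\in S$, $0\in T$ and replaces $A$, $B$, $C$ by $H_{\mu(0,0)}$ so that $F_{t}$, $G_{s}$ become birational automorphisms, after which $S=T=R$ inside $\mathcal{H}(A\times A)$ and $\mu$ is literally composition of birational maps --- associativity, cancellation and the action on $V$ are then automatic, and Weil's theorem applies (Steps Five--Six). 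In your version, generic associativity, the existence of inverses, and even the transitivity of the action (which you justify by an argument the paper only uses later, in Case One of the Main Theorem) are all asserted rather than derived; invoking Hrushovski's group configuration theorem could in principle replace these reductions, but then you would need to exhibit the configuration and verify its independence hypotheses, which the proposal also does not do. So the proposal identifies the right endpoint but leaves a genuine gap where the paper's Steps One and Three--Five stand.
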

\textbf{Notation.} Given an irreducible scheme $X$, let $\xi_{X}$ denote the generic point of $X$. For any point $x\in X$, let $k(x)$ denote the residue field of $x$. And $K(X)=k(\xi_{X})$ is the function field over $X$.

During the whole proof, we'll do the following two techniques:

1). Shrink $S$ and $T$ to smaller open sets, or to other varieties such that $F$ and $G$ are switched to equivalent families. 

2). Build a multi-function $\rho$ from $B$ (or $A$, $C$) to another variety $B'$, then replace $B$ with $B'$, and replace $F_{t}$, $G_{s}$ with a component of the composition $\rho\circ F_{t}$ and $G_{s}\circ \rho^{-1}$.

The idea of the proof is to find the ``right" irreducible component of algebraic varieties and switch $A$, $B$ and $C$ to a common algebraic variety $V$; parameter space $S$ and $T$ are included in the rational automorphism group of $V$, and the group multiplicity is map composition.

\begin{proof}
\textbf{Step one}
Throw away unneeded components, we may assume that $A$, $B$, $C$, $F$ ,$G$ $S$ and $T$ are irreducible, and we represent them with reduced scheme structure, they become integral schemes (irreducible and reduced).

\begin{claim}\label{cl: composition lemma trick}
Let $F$ be a family of multi-functions $(F_{t}: A\rightarrow B, t\in T)$ between irreducible algebraic varieties $A$ and $B$,  and is parametrized by an irreducible algebraic variety $T$. There exists a family of multi-fuctions $(F'_{t'}: A\rightarrow B, t'\in T')$ equivalent to $F$, such that every fiber $F'_{t'}$ is irreducible.
\end{claim}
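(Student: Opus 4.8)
The plan is to produce $T'$ as a suitable generically finite cover of $T$, obtained by decomposing the generic fiber $F_{\xi_T}$ into its irreducible components, and then spreading that decomposition out over a dense open subset of $T$. Concretely, let $F\subset A\times B\times T$ be the given family, with structure map $\pi\colon F\to T$ whose generic fiber is the multi-function $F_{\xi_T}\subset (A\times B)_{K(T)}$. Over a finite extension $L/K(T)$ this fiber breaks into irreducible components $(F_{\xi_T})_1,\dots,(F_{\xi_T})_r$, each of which is a multi-function $A\to B$ over $L$ (both projections stay dominant and generically finite, since that is true for $F_{\xi_T}$ and a component dominating $A\times B$-projections is forced by the generic-finiteness/surjectivity hypotheses — here one uses irreducibility of $A$ and $B$ so that the projection of any component is either all of $A$, resp. $B$, or contained in a proper closed set). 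Let $T'$ be the normalization of $T$ in $L$ (equivalently, a projective integral scheme with $K(T')=L$ and a finite dominant map $T'\to T$); then $T'$ is irreducible with function field $L$.

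Next I would spread out: each $(F_{\xi_T})_i$ is defined over $L=K(T')$, so by the standard "spreading out" principle (EGA~IV) there is a dense open $U'\subset T'$ and a closed subscheme $F'\subset A\times B\times U'$, flat over $U'$, whose generic fiber is $\bigsqcup_i (F_{\xi_T})_i$ and whose fiber $F'_{t'}$ over each $t'\in U'$ is irreducible — after possibly shrinking $U'$ so that the components do not collide, remain irreducible, and keep both projections to $A$ and $B$ finite surjective (these are open conditions on the base). Taking the closure of $F'$ in $A\times B\times T'$ gives the desired family $(F'_{t'}\colon A\to B,\ t'\in T')$ with irreducible generic, hence irreducible generic, fibers.

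Finally I must check equivalence in the sense of the earlier definition: that the rational images of $T$ and $T'$ in the Hilbert scheme $\mathcal H(A\times B)$ have the same closure. The induced map $\phi_{F'}\colon T'\dashrightarrow \mathcal H(A\times B)$ sends $t'$ to the component $(F')_{t'}$, while $\phi_F\colon T\dashrightarrow \mathcal H(A\times B)$ sends $t$ to $F_t$; composing $\phi_{F'}$ with $T'\to T$ and comparing, one sees $F_t$ is (generically) the union of the finitely many $(F')_{t'}$ lying over $t$, so the two subsets of $\mathcal H(A\times B)$ determined by $F$ and $F'$ have the same closure once we also adjoin to $T'$ the finitely many "component-selecting" sheets — i.e. $T'$ as constructed parametrizes exactly the irreducible pieces of the members of $F$. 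Thus $F'$ is equivalent to $F$ and has irreducible fibers.

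The main obstacle is the spreading-out step: one must argue carefully that irreducibility of the geometric generic fiber propagates to irreducibility of the fibers over a dense open set (not merely geometric connectedness), and simultaneously that the generic-finiteness and surjectivity of both projections $F'_{t'}\to A$, $F'_{t'}\to B$ persist — each of these is a constructible/open condition, but assembling them and ensuring the finitely many components stay disjoint over $U'$ requires the usual EGA semicontinuity and flatness machinery, which is where the real work lies. Everything else (normalization to build $T'$, closure to extend the family, comparison of Hilbert-scheme images) is formal.
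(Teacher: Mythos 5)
Your skeleton matches the paper's proof (pass to the normalization $T'$ of $T$ in a finite extension $L/K(T)$, spread out over a dense open subset of $T'$, compare with $F$), but as written the construction has a genuine gap. You ask for a single closed subscheme $F'\subset A\times B\times U'$ whose generic fiber is $\bigsqcup_i (F_{\xi_T})_i$ \emph{and} whose fibers $F'_{t'}$ are all irreducible; these two demands are incompatible whenever the generic fiber has $r\geq 2$ geometric components, since the number of components is generically constant in the family, so the fibers of such an $F'$ again have $r$ components. The paper instead passes to a \emph{single} irreducible component $F_{0}$ of $F\times_{T}T'$ (chosen, in the Composition Lemma, so that the hypothesis on the composition family survives), and the specific choice of $L$ --- the Galois closure of the relative algebraic closure of $K(T)$ in $K(F)$ --- is exactly what guarantees that $K(T')=L$ is algebraically closed in $K(F_{0})$, i.e.\ that the generic fiber of $F_{0}\rightarrow T'$ is \emph{geometrically} integral. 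Only then does the constructibility theorem the paper quotes (EGA IV, 9.7.7, for the locus of geometrically integral fibers) give irreducibility of the closed fibers (whose residue field is algebraically closed) over a dense open subset of $T'$. This is precisely the step you label ``the real work'' and defer: integrality of the generic fiber over $K(T')$ alone does not propagate, so without the geometric integrality of the chosen component your spreading-out step does not close.

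Your equivalence argument also does not work as stated: ``adjoining component-selecting sheets'' to $T'$ produces a disconnected parameter space, whereas the definition of a family requires the parametrizing variety to be irreducible; and under the strict definition of equivalence (equal closures of the images in $\mathcal{H}(A\times B)$) a family whose members are \emph{components} of the $F_{t}$ is not literally equivalent to $F$ when the $F_{t}$ are reducible --- the Hilbert points of a reducible fiber and of its components are different. What is actually needed, and what the paper does, is to replace $F$ by one component-family of $F\times_{T}T'$, selected so that the hypotheses in which the Claim is applied are preserved, rather than to reconstitute the same Hilbert-scheme image.
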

\begin{proof}
We'll use the following theorem to discuss when a fiber is irreducible:

\begin{theorem}(c.f. EGA \cite{grothendieck1967elements} 4 III Theorem 9.7.7)
Let  $f : X \rightarrow S$ be a morphism of finite presentation, and let $E$ be the set of $s\in S$ such that $X_{s}$ is geometrically integral, then $E$ is locally constructible in $S$.
\end{theorem}
We consider the morphism from $F$ to $T$ induced by $F \hookrightarrow A\times B\times T\rightarrow T$. Let us first prove the Claim ~\ref{cl: composition lemma trick} under the assumption that the generic fiber $F_{\xi_{T}}$  is geometrically integral.  

Recall that a subset of $X$ is called locally closed if it is an intersection of an open subset and a closed subset. And a constructible subset of $X$ is an union of finitely many locally closed subset. As every close subset containing the generic point $\xi_{T}$ is $T$ itself, if $\xi_{T}$ is contained in $E$, then $E$ contains an open set $U$, and $U$ contains $\xi_{T}$.  After replacing $T$ with $U$, every fiber $F_{t}$ is geometrically integral, hence irreducible.

If $F_{\xi}$ is not geometrically integral, we'll switch $F$ to an equivalent family such that the new family has this property. Let $K(T)$ denote the function field of $T$, and let $L$ denote the Galois closure of the algebraic closure of $K(T)$ in $K(F)$. The field extension $L/K(T)$ is finite by Noether Normalization Lemma. Let $T'\rightarrow^{\sigma} T$ denote the normalization of $T$ regarding to $L/K(T)$, then $K(T')=L$ and we have the following diagram:

\begin{displaymath}
	\xymatrix{F\times_{T}T' \ar[r]\ar[d]^{\sigma_{F}}& T'\ar[d]^{\sigma_{T}}\\
		F \ar[r]&T}
\end{displaymath}
We apply Theorem 1.5.6 in \cite{fu2011etale} to prove that after switching $T$ to an open subset, $\sigma$ is flat.
\begin{theorem}
Let $f: X\rightarrow Y$ be a morphism of finite type between noetherian schemes and let $\mathcal{F}$ be a coherent $\mathcal{O}_{X}$-module.

(i) If $Y$ is integral, then there exists a nonempty open subset $V$ in $Y$ such that for any $x\in f^{-1}(V)$, $\mathcal{F}_{x}$ is flat over $\mathcal{O}_{Y, f(x)}$.

(ii) In general, the set $U$ of points $x\in X$ such that $\mathcal{F}_{x}$ are flat over $\mathcal{O}_{Y, f(x)}$ is open.
\end{theorem}

Without loss of generality, we assume that $\sigma_{T}: T'\rightarrow T$ is flat. As flatness is invariant under base change, $\sigma_{F}: F\times_{T}T'\rightarrow F$ is flat, a flat morphism $\sigma_{F}$ is an open map. This implies that the generic points of all irreducible components of $F\times_{T} T'$ is mapped to $\xi_{F}$. By hypothesis $F\rightarrow T$ is dominant, so the image of $\xi_{F}$ is $\xi_{T}$. Trace the commutative diagram, we observe that the map $F\times_{T}T'\rightarrow T'$ takes the generic points of all irreducible components of $F\times_{T}T'$ to $\xi_{T'}$.  Thus we have one-to-one correspondence between the following three sets: \{irreducible components of $F\times_{T}T' = \cup_{i=1}^{n}F_{i}$\},  \{irreducible components of $(F\times_{T}T')_{\xi_{T'}}=\cup_{i=1}^{n}J_{i}$\} and \{$K(F)\times_{K(T)}K(T')=\prod_{i=1}^{n}K_{i}$\}. 

We choose the irreducible component of $F\times_{T}T'$ such that if we replace $F$ with this component, $T$ with $T'$, the new family of composition $(G_{s}\circ F_{t}: A\rightarrow C, (s,t)\in S\times T)$ still has a common component with a $k$-dimensional family of multi-functions from $A$ to $C$. Let $F_{0}$ be the chosen component, and note $J_{0}=(F_{0})_{\xi_{T'}}$. Consider the morphism $F_{0}\rightarrow T'$, we have that $J_{0}$ is geometrically integral, because $K(T')$ is algebraic closed in $K(J_{0})$ by construction. And we get back to the situation where generic fiber is geometrically integral, with previous arguments, we conclude the claim. 
\end{proof}

In the first step, we reduce the problem to the special case where the compositions $G_{s}\circ F_{t}$ are irreducible for a dense set of $(s,t)$, hence the composition family itself moves in a $k$-dimensional family. We shall achieve this goal via Galois theory. 

We have first $K(B\times T)\leq K(F)$ is a finite field extension because $F\rightarrow B\times T$ is dominant and generically finite, and $\deg tr_{k}K(B\times T)=\deg tr_{k} K(F)=n+k$. Let $\tilde{F}$ denote the normalization of $F$ in the Galois closure of this field extension, and similarly, let $\tilde{G}$ denote the normalization of $G$ in the Galois closure of field extension $K(B\times S)\leq K(G)$. $\tilde{F}_{t}$ and $\tilde{G}_{s}$ will denote the fibers of the natural morphisms $\tilde{F}\rightarrow T$ and $\tilde{G}\rightarrow S$. $\tilde{F_{t}}$ and $\tilde{G_{s}}$ can still be assumed irreducible after possibly switching to an equivalent family as in the Claim ~\ref{cl: composition lemma trick}.

\begin{claim}\label{claim: galois}
There exists dense open sets $S_{0}\subset S$ and $T_{0}\subset T$, such that for every $(s,t)\in S_{0}\times T_{0}$,  the function fields $K(\tilde{F}_{t})$ and $K(\tilde{G}_{s})$ are Galois extensions of $K(B)$.
\end{claim}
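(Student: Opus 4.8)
The plan is to exploit the fact that being a Galois extension is a generically constructible condition in the parameter space, so that once we know the generic fibers $K(\tilde F_{\xi_T})$ and $K(\tilde G_{\xi_S})$ are Galois over $K(B)$, the same holds on a dense open set. First I would set up the relevant field extensions carefully: recall that $\tilde F$ was defined as the normalization of $F$ in the Galois closure of $K(B\times T)/K(F)$ (wait --- of $K(F)/K(B\times T)$), so $K(\tilde F)$ is Galois over $K(B\times T)=K(B)(T)$. The key point is to pass from the extension over $K(B\times T)$ to the extension over $K(B)$ after restricting along a generic point $t$ of $T$. Concretely, $\tilde F \to T$ has generic fiber $\tilde F_{\xi_T}$ with function field $K(\tilde F_{\xi_T}) = K(\tilde F)\otimes_{K(T)} \overline{K(T)}$-ish; more precisely $K(\tilde F_{\xi_T})$ is the function field of the base change of $\tilde F$ to the generic point of $T$, and one checks that the Galois property of $K(\tilde F)/K(B)(T)$ descends to $K(\tilde F_{\xi_T})/K(B)(\xi_T)$, and then that $K(B)(\xi_T)$ is a purely transcendental (hence "harmless") extension of $K(B)$, so that after a further normalization/irreducibility reduction as in Claim \ref{cl: composition lemma trick} the fiber field is Galois over $K(B)$ itself.

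The key steps, in order, would be: (1) record that $K(\tilde F)/K(B\times T)$ is Galois by construction, with Galois group $\Gamma_F$, and likewise $K(\tilde G)/K(B\times S)$ with group $\Gamma_G$; (2) interpret the generic fiber $\tilde F_{\xi_T}$ scheme-theoretically as the fiber product $\tilde F \times_T \operatorname{Spec} K(T)$, and observe that normality is preserved under this localization, so $\tilde F_{\xi_T}$ is normal; (3) show $K(\tilde F_{\xi_T})/K(B\otimes_k K(T)) = K(\tilde F_{\xi_T})/K(B)(\xi_T)$ remains Galois --- this is essentially the statement that a Galois extension stays Galois after base change by the (linearly disjoint) purely transcendental extension $K(T)/k$, using that $\tilde F_t$ was arranged irreducible so the fiber does not split; (4) having the Galois property over $K(B)(\xi_T)$, use the fact (same mechanism as EGA IV 9.7.7 / the constructibility theorem invoked in Claim \ref{cl: composition lemma trick}) that the locus of $t\in T$ for which $\tilde F_t$ is geometrically integral \emph{and} $K(\tilde F_t)/K(B)$ is Galois contains an open neighborhood of $\xi_T$, hence a dense open $T_0\subset T$; (5) run the symmetric argument for $\tilde G$ to get $S_0\subset S$, and take the product.

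The main obstacle I expect is step (3)–(4): making precise and justifying that "Galois over the function field of the total space $\Rightarrow$ Galois over the function field of a generic fiber." One has to be careful that $K(\tilde F_t)$ is the \emph{whole} residue field of the generic point of $\tilde F_t$ (not some smaller field), which is exactly why the preliminary reduction to irreducible fibers via Claim \ref{cl: composition lemma trick} is needed, and one must check that no ramification is introduced when specializing $T \rightsquigarrow t$ --- i.e. that the degree $[K(\tilde F):K(B\times T)]$ is preserved as $[K(\tilde F_t):K(B)]$ for $t$ in a dense open set, which follows from generic flatness/finiteness of $\tilde F \to B\times T \to T$. Once the degree is preserved and the fiber is irreducible and normal over $K(B)$, the number of $K(B)$-automorphisms of $K(\tilde F_t)$ matches the degree, giving the Galois property. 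The constructibility input then upgrades "generic" to "dense open," completing the claim.
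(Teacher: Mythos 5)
There is a genuine gap, and it sits exactly where you flagged it, in steps (3)--(4). Step (3) is actually not an issue at all: $K(\tilde F)/K(B\times T)$ is Galois by the very construction of $\tilde F$ (normalization in the Galois closure), so nothing needs to be ``descended'' to the generic fiber over $T$. The real content is passing from the generic point to closed points $t$, i.e.\ from Galois over $K(B)(T)$ to Galois over $K(B)$, and here your proposal substitutes an unavailable black box for the actual argument. There is no EGA-style constructibility theorem for the locus $\{t : K(\tilde F_t)/K(B) \text{ is Galois}\}$; EGA IV 9.7.7 concerns geometric integrality of fibers, and ``the same mechanism'' does not apply to the Galois condition -- proving that this locus contains a dense open set \emph{is} the claim. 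Your fallback argument is also insufficient as stated: degree preservation plus irreducibility plus normality of the fiber do not give that the number of $K(B)$-automorphisms of $K(\tilde F_t)$ equals the degree (normality of the scheme $\tilde F_t$ has nothing to do with normality of the field extension $K(\tilde F_t)/K(B)$; a smooth curve can perfectly well give a non-Galois extension of $\mathbb{C}(x)$). What is missing is the production of the automorphisms themselves.

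The paper supplies exactly this missing ingredient: it spreads out the action of $G=\operatorname{Gal}(K(\tilde F)/K(B\times T))$ from $\operatorname{Spec} K(\tilde F)$ (the limit of models over affine opens of $B\times T$) to an honest finite group action on a model $Y_{\lambda}$ over a dense open $S_{\lambda}\subset B\times T$ (Fu, Prop.\ 1.10.9), then restricts to the fiber over the point $(\xi_B,t)$ with $t$ closed but $\xi_B$ generic, identifies that fiber with $\operatorname{Spec} K(\tilde F_t)$, and invokes the finite-quotient proposition (Fu, Prop.\ 3.1.1) to conclude that $k(x)/k(y)=K(\tilde F_t)/K(B)$ is normal with $G\twoheadrightarrow \operatorname{Gal}(K(\tilde F_t)/K(B))$. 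Note that this route never needs your ``no ramification / degree is preserved'' step: normality of the residue extension survives even if the degree drops. If you want to salvage your automorphism-counting variant, you would still have to carry out the same spreading-out step (the $G$-action must be defined on a model over an open subset of $B\times T$, restrict to automorphisms of $\tilde F_t$ over $B$ for $t$ in a dense open set, and remain pairwise distinct there); without that, the count of automorphisms you need is simply asserted, not proved.
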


\begin{proof}

Let $S_{0}=B\times T$, and let $\{S_{\lambda}\}=\text{ affine open sets of }S_{0}$. Then $S=\varinjlim_{\lambda} S_{\lambda}=Spec(K(B\times T))$. For any $S_{0}$-scheme $X_{0}$, we write $X_{\lambda}=X_{0}\times_{S_{0}}S_{\lambda}$, and $X=\varinjlim_{\lambda}X_{\lambda}=X_{0}\times_{S_{0}}S$.
 
We will apply Proposition 1.10.9 in Etale Cohomology Theory \cite{fu2011etale}. 

\begin{proposition}
Assume $S_{0}$ is quasi-compact and quasi-separated.

(i) Let $X_{0}$ and $ Y_{0}$ be $S_{0}$-schemes such that $X_{0}\rightarrow S_{0}$ is quasi-compact and quasi-separated, and that $Y_{0}\rightarrow S_{0}$ is locally of finite presentation. Then the canonical map
\[\varinjlim_{\lambda}Hom_{S_{\lambda}}(X_{\lambda}, Y_{\lambda})\rightarrow Hom_{S}(X,Y)\]
is bijective.

(ii) Suppose $X_{0}$ and $Y_{0}$ are $S_{0}$-schemes of finite presentation. If there exists an $S$-isomorphism $f : X  \xrightarrow{\cong} Y$, then for a sufficiently large $\lambda$, there exists an $S_{\lambda}$-isomorphism $f_{\lambda}:X_{\lambda}\xrightarrow{\cong}Y_{\lambda}$ including $f$.

(iii) For any $S$-scheme $X$ of finite presentation, there exists an $S_{\lambda}$-scheme $X_{\lambda}$ of finite presentation for a sufficiently large $\lambda$ such that $X\cong X_{\lambda}\times_{S_{\lambda}}S$.
\end{proposition}
\end{proof}
Write $G$ as the Galois group of field extension $K(B\times T)\leq K(\tilde{F})$, and let $\rho: G\times Spec(K(\tilde{F}))\rightarrow Spec(K(\tilde{F}))$ denote the induced morphism of Galois group action. Then we have the following induced commutative diagram on schemes:
\begin{displaymath}
	\xymatrix{G\times G\times Spec(K(\tilde{F}))\ar[r]^{id\times \rho}\ar[d]_{multiply \times id} & G\times Spec(K(\tilde{F}))\ar[d]^{\rho}\\
		G\times Spec(K(\tilde{F}))\ar[r]_{\rho} & Spec(K(\tilde{F}))}
\end{displaymath}
and
\begin{displaymath}
	\xymatrix{G\times Spec(K(\tilde{F}))\ar[dr]_{\mathbf{1}\times \pi}\ar[r]_{\rho} & Spec(K(\tilde{F}))\ar[d]_{\pi}\\
		& Spec(K(S_{0}))}
\end{displaymath}

We apply the above Proposition with $X_{0}=G\times \tilde{F}$, $Y_{0}=\tilde{F}$, as a consequence, $X=G\times Spec(K(\tilde{F}))$ and $Y=Spec(K(\tilde{F}))$. For $\lambda$ large enough, we have Galois group action on $Y_{\lambda}$:
\begin{displaymath}
	\xymatrix{G\times G\times Y_{\lambda}\ar[r]^{id\times \rho}\ar[d]_{multiply \times id} & G\times Y_{\lambda} \ar[d]^{\rho}\\
		G\times Y_{\lambda}\ar[r]_{\rho} & Y_{\lambda}}
\end{displaymath}
and
\begin{displaymath}
	\xymatrix{G\times Y_{\lambda}\ar[dr]_{\mathbf{1}\times \pi}\ar[r]_{\rho} & Y_{\lambda}\ar[d]_{\pi}\\
		& S_{\lambda}}
\end{displaymath}

More explicitly, there exists a dense open set $T_{\lambda}\subset \pi_{T}(S_{\lambda})\cap T$ ($\pi_{T}$ is the projection from $S_{\lambda}$ to $T$)  such that for every $t\in T_{\lambda}$, the group $G$ acts on $Y_{\lambda}\times_{S_{\lambda}}Spec(k(\xi_{B},t))$ by base change on fiber $(\xi_{B},t)$:

\begin{displaymath}
	\xymatrix{Y_{\lambda}\ar[r] &S_{\lambda}\\
		Y_{\lambda}\times_{S_{\lambda}}Spec(k(\xi_{B},t))\ar[r] \ar[u]&Spec(k(\xi_{B},t))\ar[u]}
\end{displaymath}

\begin{displaymath}
	\xymatrix{G\times G\times Y_{\lambda}\times_{S_{\lambda}}Spec(k(\xi_{B},t))\ar[r]^{id\times \rho}\ar[d]^{multiply\times id} & G\times Y_{\lambda}\times_{S_{\lambda}}Spec(k(\xi_{B},t))\ar[d]_{\rho}\\
	G\times Y_{\lambda}\times_{S_{\lambda}}Spec(k(\xi_{B},t))\ar[r]^{\rho} & Y_{\lambda}\times_{S_{\lambda}}Spec(k(\xi_{B},t))}
\end{displaymath}

The following commutative diagram shows that $Y_{\lambda}\times_{S_{\lambda}}Spec(k(\xi_{B},t))=\tilde{F}\times_{S_{0}}Spec(k(\xi_{B},t))$ is exactly $Spec(K(\tilde{F}_{t}))$:

\begin{displaymath}
	\xymatrix{\tilde{F}_{t} \ar[r]\ar[d]& Spec(k(t))\ar[r] & T \ar[r]&Spec(k)\\
		\tilde{F}\ar[rr] &&S_{0} \ar[u]\ar[r]&B\ar[u]\\
		\tilde{F}\times_{S_{0}}Spec(k(\xi_{B},t))\ar[u]\ar[r]&Spec(k(\xi_{B},t))\ar[rr]\ar@/^1pc/[uu]\ar[ur]&&Spec(K(B))\ar[u]\\
		&\tilde{F}_{t}\times_{B} Spec(K(B))=Spec(K(\tilde{F}_{t}))\ar@/^2pc/[uuul]\ar[urr]\ar[ul]^{\cong}}
\end{displaymath}

Rewrite group action diagram of $G$ on $Y_{\lambda}$ under base change $\times_{S_{\lambda}}Spec(k(\xi_{B},t))$ for some close point $t\in T_{\lambda}$: 
\begin{displaymath}
	\xymatrix{G\times G\times Spec(K(\tilde{F}_{t})) \ar[r]^{id\times\rho}\ar[d]^{multiply\times id}& G\times Spec(K(\tilde{F}_{t}))\ar[d]^{\rho}\\
		G\times Spec(K(\tilde{F}_{t}))\ar[r]^{\rho} & Spec(K(\tilde{F}_{t}))}
\end{displaymath}

\begin{displaymath}
	\xymatrix{G\times Spec(K(\tilde{F}_{t})) \ar[dr]_{\mathbf{1}\times \pi}\ar[r]_{\rho} & Spec(K(\tilde{F}_{t}))\ar[d]_{\pi}\\
		& Spec(k(\xi_{B},t))=Spec(K(B))}
\end{displaymath}

We can choose $Y_{\lambda}$ affine, and $G$ is a finite group acting on $Y_{\lambda}$ such that $S_{\lambda}=^{G}Y_{\lambda}$ is an small open set of $S_{0}$. We shall apply Proposition 3.1.1 in Etale Cohomology Theory \cite{fu2011etale}:

\begin{proposition}\label{prop: galois}
Let $A$ be a ring on which a finite group $G$ acts on the left, $B=A^{G}$, $X=Spec A$, $Y=Spec B$, and $\pi: X\rightarrow Y$ the morphism corresponding to the homomorphism $A^{G}\rightarrow A$.

(i)$X$ is integral over $Y$.

(ii)$\pi$ is surjective. Its fibers are orbits of $G$, and the topology on $Y$ is the quotient topology induced from $X$.

(iii) Given $x\in X$, let $y=\pi(x)$ and let
\[G_{x}=\{g\in G|gx=x\}\]
be the stabilizer of $x$. Then the residue field $k(x)$ is a normal algebraic extension of the residue field $k(y)$, and the canonical homomorphism $G_{x}\rightarrow Gal(k(x)/k(y))$ is surjective.

(iv) The canonical morphism $\mathcal{O}_{Y}\rightarrow (\pi_{*}\mathcal{O}_{X})^{G}$ is an isomorphism, and $Y$ is the quotient of $X$ by $G$.
\end{proposition}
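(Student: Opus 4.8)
The plan is to deduce this classical description of the quotient $X/G$ from the standard theory of integral extensions, exploiting the symmetrization available for a \emph{finite} group: for $a\in A$ the polynomial $P_{a}(T)=\prod_{g\in G}(T-g(a))$ has $G$-invariant coefficients, hence lies in $B[T]=A^{G}[T]$, and $P_{a}(a)=0$. This settles \textbf{(i)} at once: every $a\in A$ is a root of a monic polynomial over $B$, so $A$ is integral over $B$ and $\pi$ is an integral morphism; then the usual package (lying-over, going-up, incomparability, and the fact that for an integral extension $B\hookrightarrow A$ the map $\operatorname{Spec}A\rightarrow\operatorname{Spec}B$ is surjective and closed) gives surjectivity and closedness of $\pi$, which is part of \textbf{(ii)}.

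The only non-formal point in \textbf{(ii)} is that $G$ acts transitively on each fibre, and I would prove it by the usual prime-avoidance trick: given $\mathfrak{q}\in\operatorname{Spec}B$ and two primes $\mathfrak{p}_{1},\mathfrak{p}_{2}$ of $A$ over $\mathfrak{q}$, incomparability gives $\mathfrak{p}_{2}\not\subseteq g(\mathfrak{p}_{1})$ for every $g$, so there is $a\in\mathfrak{p}_{2}$ with $a\notin\bigcup_{g\in G}g(\mathfrak{p}_{1})$; then $\prod_{g\in G}g(a)\in A^{G}=B$ lies in $\mathfrak{p}_{2}\cap B=\mathfrak{q}\subseteq\mathfrak{p}_{1}$, so some $g(a)\in\mathfrak{p}_{1}$, i.e. $a\in g^{-1}(\mathfrak{p}_{1})$, a contradiction. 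Thus the fibres of $\pi$ are exactly the $G$-orbits; since $\pi$ is continuous, closed and surjective it is a topological quotient map, so $Y$ carries the quotient topology and is homeomorphic to $X/G$.

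\textbf{Part (iii) is the heart of the matter and the step I expect to give the most trouble.} Fix $x\leftrightarrow\mathfrak{p}$ over $y\leftrightarrow\mathfrak{q}$ and let $D=G_{x}$ be the decomposition group. Normality of $k(x)/k(y)$ is the soft half: $A/\mathfrak{p}$ is integral over $B/\mathfrak{q}$, and for $\bar a\in A/\mathfrak{p}$ the reduction $\overline{P_{a}}\in k(y)[T]$ splits completely in $k(x)$ (its roots are the images of the $g(a)$), so the minimal polynomial of $\bar a$ over $k(y)$ divides $\overline{P_{a}}$ and hence splits in $k(x)$; as such elements generate $k(x)$ over $k(y)$, $k(x)$ is a splitting field over $k(y)$, so $k(x)/k(y)$ is normal. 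For surjectivity of $D\rightarrow\operatorname{Gal}(k(x)/k(y))$ I would reduce to the claim that the fixed field $k(x)^{D}$ is purely inseparable over $k(y)$; granting this, every $k(y)$-automorphism of $k(x)$ fixes $k(x)^{D}$ (a purely inseparable element over $k(y)$ is fixed by every $k(y)$-automorphism), so $\operatorname{Aut}(k(x)/k(y))=\operatorname{Aut}(k(x)/k(x)^{D})$, and Artin's lemma identifies the latter with the image of $D$. To prove the inseparability I would first apply \textbf{(ii)} to $D$ acting on $A$ with invariant ring $A^{D}$: this shows $\mathfrak{p}$ is the \emph{unique} prime of $A$ over $\mathfrak{p}\cap A^{D}$, so $D$ genuinely acts on $A/\mathfrak{p}$ and $\operatorname{Frac}\big((A/\mathfrak{p})^{D}\big)=k(x)^{D}$; then for a $D$-invariant $\xi=\bar a$ the reduction of $\prod_{h\in D}(T-h(a))$ collapses to $(T-\xi)^{|D|}$ with coefficients in the residue field of $A^{D}$ at $\mathfrak{p}\cap A^{D}$, so the minimal polynomial of $\xi$ over that field has the form $(T-\xi)^{d}$, which — being irreducible — forces $d$ to be a power of $\operatorname{char}k(y)$, i.e. $\xi$ is purely inseparable; one then descends this from the residue field of $A^{D}$ down to $k(y)$. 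Keeping this tower of invariant subrings (and of residue subfields) straight is precisely the delicate bookkeeping I anticipate as the main obstacle.

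Finally, for \textbf{(iv)} I would verify the canonical map $\mathcal{O}_{Y}\rightarrow(\pi_{*}\mathcal{O}_{X})^{G}$ on the distinguished opens $D(f)$, $f\in B$: there $\pi^{-1}(D(f))$ is the distinguished open of $f$ in $X$, and $\Gamma(\pi^{-1}D(f),\mathcal{O}_{X})^{G}=(A_{f})^{G}=(A^{G})_{f}=B_{f}=\Gamma(D(f),\mathcal{O}_{Y})$, where $(A_{f})^{G}=(A^{G})_{f}$ uses finiteness of $G$ (if $g(x/f^{n})=x/f^{n}$ for all $g$, then $f^{k}x\in A^{G}$ for $k$ large, since $f\in A^{G}$). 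A morphism of sheaves that is an isomorphism on a basis of opens is an isomorphism, which proves the first assertion; combined with \textbf{(ii)}, it says exactly that $(Y,\mathcal{O}_{Y})$ realizes the quotient of $(X,\mathcal{O}_{X})$ by $G$ in the category of (locally ringed) schemes.
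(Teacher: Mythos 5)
Your parts (i), (ii) and (iv) are correct and complete in outline (integrality via $P_{a}(T)=\prod_{g}(T-g(a))$, transitivity on fibres by prime avoidance plus incomparability, and $(A_{f})^{G}=(A^{G})_{f}$ on distinguished opens), and the normality half of (iii) is also fine. Note that the paper itself does not prove this proposition at all: it is quoted verbatim as Proposition 3.1.1 of Fu's \emph{Etale Cohomology Theory} (the result is the classical theorem of Bourbaki, Alg.\ Comm.\ V, \S 2, Th.\ 2), so the only meaningful comparison is with that standard proof — and there your sketch of the surjectivity in (iii) has a genuine gap, exactly at the place you flag as ``bookkeeping''.

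Your reduction is: it suffices that $k(x)^{D}$ be purely inseparable over $k(y)$, and your collapsing-polynomial argument does prove that every $D$-invariant element of $A/\mathfrak{p}$ is purely inseparable over the residue field $k(\mathfrak{p}\cap A^{D})$ of the decomposition ring. But the remaining ``descent from the residue field of $A^{D}$ down to $k(y)$'' is precisely the assertion that $k(\mathfrak{p}\cap A^{D})$ is purely inseparable over $k(y)$, and this is not bookkeeping: it is essentially equivalent to the surjectivity you are trying to prove (for instance, in the special case $D=\{1\}$ it already says that $k(x)/k(y)$ is purely inseparable whenever the stabilizer is trivial, which is a nontrivial consequence of the theorem, not an input to it). A naive attempt fails: for $c\in A^{D}$ the reduction of $\prod_{g\in G}(T-g(c))$ has coefficients in $k(y)$, but the factors with $g\notin D$ do not collapse, so nothing forces the minimal polynomial of $\bar c$ over $k(y)$ to have a single root. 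The standard proofs close exactly this hole with an extra device absent from your outline: one chooses the lift $a\in A$ of the relevant residue (e.g.\ a generator of a finite separable subextension) to lie in every prime $g\mathfrak{p}\neq\mathfrak{p}$ of the orbit (possible since $\mathfrak{p}\not\supseteq\bigcap_{g\mathfrak{p}\neq\mathfrak{p}}g\mathfrak{p}$), so that in $\prod_{g\in G}(T-\overline{g(a)})$ all conjugates coming from $g\notin D$ reduce to $0$ and every nonzero root is of the form $\overline{g(a)}$ with $g\in D$; one then matches a given $k(y)$-automorphism with some $g\in D$ on the separable part. Without this step (or an equivalent one), the chain ``$k(x)^{D}$ purely inseparable over $k(\mathfrak{p}\cap A^{D})$, hence over $k(y)$'' does not go through, and the Artin-lemma conclusion is not yet justified.
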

Apply the above proposition with  $X= Y_{\lambda}$,  $Y=S_{\lambda}$ and $y=(\xi_{B},t)$. And $y$ has only one inverse image $\pi^{-1}(y)=x$,  since $Y_{\lambda}\times_{S_{\lambda}}Spec(k(\xi_{B},t))=Spec(K(\tilde{F}_{t}))$ is the spectrum of a field. By Proposition ~\ref{prop: galois} $(iii)$,  the residue field $k(x)$ is a normal algebraic extension of the residue field $k(y)$ and $G_{x} =G \rightarrow Gal(k(x)/k(y))=Gal(K(\tilde{F}_{t})/K(B))$ is surjective. We conclude that  $K(\tilde{F}_{t}) $ is Galois extension of $K(B)$ for $t$ in an dense open set of $T$, so is $K(\tilde{G}_{s})$.
\end{proof}

From the Claim ~\ref{claim: galois}, one can identify $K(\tilde{G}_{s})$ and $K(\tilde{F}_{t})$ with their unique copy in the algebraic closure of $K(B)$, moreover:

\begin{claim}
Let $L=K(\tilde{G}_{s})\cap K(\tilde{F}_{t})$, then $L$ is contained in ``nearly all" $K(\tilde{G}_{s})$ and $K(\tilde{F}_{t})$. Here ``nearly all" means for all closed point $(s,t)$ in a dense open set of $S\times T$.
\end{claim}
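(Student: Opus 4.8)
The plan is to reduce the statement to a spreading-out argument combined with a dimension count on the family of field intersections. Concretely, the two Galois extensions $K(\tilde{F}_t)$ and $K(\tilde{G}_s)$ live inside a fixed separable closure $\overline{K(B)}$ once we fix their embeddings as in Claim~\ref{claim: galois}, and $L = K(\tilde{F}_t)\cap K(\tilde{G}_s)$ is again a finite Galois extension of $K(B)$, corresponding (via the Galois correspondence) to the subgroup $\langle H_F, H_G\rangle$ generated by the two subgroups $H_F = \mathrm{Gal}(\overline{K(B)}/K(\tilde{F}_t))$ and $H_G = \mathrm{Gal}(\overline{K(B)}/K(\tilde{G}_s))$ of $\mathrm{Gal}(\overline{K(B)}/K(B))$. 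The key point is that, as $t$ ranges over the generic fibre behaviour of $\tilde{F}\to T$, the field $K(\tilde{F}_t)$ is the specialization at $t$ of the function field $K(\tilde{F})$ viewed over $K(B)$; and likewise for $\tilde{G}_s$ over $S$. So there is a ``generic'' model — an $L_0$ defined over $K(B\times T\times S)$ inside the compositum — of which the fibrewise $L$ is the specialization.

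First I would pass to the generic point $(\xi_S,\xi_T)$ of $S\times T$ and consider the two field extensions $K(\tilde F)\otimes_{K(B\times T)}K(B\times S\times T)$ and $K(\tilde G)\otimes_{K(B\times S)}K(B\times S\times T)$, both Galois over $K(B\times S\times T)$ by base change of the Galois property established in Claim~\ref{claim: galois} (shrinking $S$, $T$ as needed so that the Galois property is fibrewise valid). Let $L_0$ be the intersection of these two fields inside a common separable closure; this is a finite Galois extension of $K(B\times S\times T)$. Then I would invoke the spreading-out machinery already used in the proof of Claim~\ref{claim: galois} — Proposition 1.10.9 of \cite{fu2011etale} on the compatibility of finite-presentation data with filtered limits, together with the constructibility result (EGA IV, 9.7.7) and the generic-flatness statement (Theorem 1.5.6 of \cite{fu2011etale}) — to descend $L_0$ to a finite étale $K(B)$-algebra over a dense open subset $S'\times T'\subset S\times T$, whose fibre at each closed point $(s,t)\in S'\times T'$ is canonically identified with a subfield of both $K(\tilde F_t)$ and $K(\tilde G_s)$.

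The remaining step is to check that this fibrewise subfield is precisely $L = K(\tilde F_t)\cap K(\tilde G_s)$, not merely contained in it. Here I would argue by degrees: the degree $[L_0 : K(B\times S\times T)]$ is an upper bound for $[L:K(B)]$ at every closed point (Galois degrees can only drop under specialization of the subgroups), while semicontinuity of the number of common irreducible factors of the two defining polynomials — or, equivalently, upper semicontinuity of $\dim_{K(B)}\mathrm{Hom}_{K(B)\text{-alg}}(L_0{}_{(s,t)}, K(\tilde F_t)\cap K(\tilde G_s))$ — forces equality on a dense open set. So on $S'\times T'$ we get $L_0{}_{(s,t)} = K(\tilde F_t)\cap K(\tilde G_s)$, which is exactly the assertion that the single field $L$ (namely the $K(B)$-algebra $L_0$) is contained in — in fact equals — nearly all the $K(\tilde F_t)$ and $K(\tilde G_s)$.

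The main obstacle I anticipate is the last semicontinuity point: ruling out the possibility that the intersection $K(\tilde F_t)\cap K(\tilde G_s)$ \emph{jumps up} on a proper closed subset of $S\times T$, so that the generic model $L_0$ fails to capture it there. This is precisely the phenomenon that Claim~\ref{def: common component}-style ``common component'' statements are sensitive to, and it is handled by the same constructibility-plus-shrinking routine as before: the locus where the two function fields acquire a larger common subfield than the generic one is constructible and does not contain the generic point, hence is contained in a proper closed subset, and we simply remove it. Everything else is bookkeeping with the Galois correspondence and the étale spreading-out already developed in the proof of Claim~\ref{claim: galois}.
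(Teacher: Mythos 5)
Your overall strategy --- intersect at the generic point of $S\times T$, spread out, and use semicontinuity to rule out the fibrewise intersection jumping up --- is close in spirit to the paper's argument, which applies the semicontinuity theorem to $h^{0}$ of the fibres of $\tilde{F}\times_{B}\tilde{G}\rightarrow S\times T$ together with the identity (valid precisely because both extensions are Galois over $K(B)$) that the number of irreducible components of $\tilde{F}_{t}\times_{B}\tilde{G}_{s}$ equals $[K(\tilde{F}_{t})\cap K(\tilde{G}_{s}):K(B)]$. Your ``number of common irreducible factors'' is a vaguer stand-in for exactly this identity, and note that your degree inequality is stated backwards: under specialization the intersection can only grow, so the generic degree is a \emph{lower} bound at closed points, and the semicontinuity argument is what shows it does not strictly grow outside a proper closed subset.

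The genuine gap is at the end: you identify ``the single field $L$'' with ``the $K(B)$-algebra $L_{0}$'', but $L_{0}$ is a finite extension of $K(B\times S\times T)$, not of $K(B)$, and its specializations $L_{0,(s,t)}$ are a priori a family of subfields of the algebraic closure of $K(B)$ that may vary with $(s,t)$ even though their degree is constant. The claim --- and its later use, where $B$ is replaced by the normalization $B'$ of $B$ in $L$ so that all maps $\tilde{F}_{t}\rightarrow B$ and $\tilde{G}_{s}\rightarrow B$ factor through the single $\phi: B'\rightarrow B$ --- requires one fixed finite Galois extension $L/K(B)$ contained in nearly all the fibre fields, and constancy does not follow from spreading out alone. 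The paper proves it by a separate step: for fixed $s$ there are only finitely many Galois subfields $L_{1},\dots,L_{r}$ of the fixed field $K(\tilde{G}_{s})$ having the minimal degree; the loci $\{t\in T:\ L_{i}\subseteq K(\tilde{F}_{t})\}$ are closed (a second application of semicontinuity, obtained by normalizing $B$ in $L_{i}$ and counting components of $\tilde{F}_{t}\times_{B}B_{L_{i}}$); these finitely many closed sets cover the irreducible dense open $W_{s}$, so one of them contains it, forcing the intersection to be one fixed $L_{i}$ for all $t\in W_{s}$; the argument is then symmetrized in $s$. Your proposal needs this rigidity step (or an equivalent) to reach the stated conclusion.
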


\begin{proof}
We shall apply the semicontinuity theorem with $X=\tilde{G}\times_{B}\tilde{F}$, $\mathcal{F}=\mathcal{O}_{X}$ and $Y$ is an open set of $T\times S$:

\begin{theorem} \cite[Theorem 12.8]{hartshorne1977algebraic}
Let $f :X\rightarrow Y$ be a projective morphism of noetherian schemes and let $\mathcal{F}$ be a coherent sheaf on $X$, flat over $Y$. Then for each $i\geq 0$, the function $h^{i}(y,\mathcal{F})=dim_{k(y)}H^{i}(X_{y},\mathcal{F}_{y})$ is an upper semicontinuous function on $Y$.
\end{theorem}

We note first that  $\dim_{k(y)}H^{0}(X_{y}, \mathcal{F}_{y})$ is finite from \cite[Theorem 9.10]{harari-cours}. Then there exists a non empty open set $W\subset Y$, such that for any $y$ in $W$, $\dim_{k(y)}H^{0}(X_{y}, \mathcal{F}_{y})$ attains the minimum. And for every close point $y\in W$, 
\begin{align*}
\dim_{k(y)} H^{0}(X_{y}, \mathcal{F}_{y})=& \# \text{ irreducible components of } X_{y}=\tilde{F}_{t}\times_{B}\tilde{G}_{s}\\
=& \# \text{ irreducible components of } Spec(K(\tilde{F}_{t})\times_{Spec(K(B))}Spec(K(\tilde{G}_{s}))\\
=& [K(\tilde{F}_{t})\cap K(\tilde{G}_{s}) : K(B)]
\end{align*}
The first equality requires the fact that $k$ is algebraically closed (see \cite[Corollaire 9.15]{harari-cours}).

From now on, we have an open set $W\subset T\times S$, such that for all closed point $y=(t,s)\in W$, the degree of field extension $[ K(\tilde{F}_{t}\cap K(\tilde{G}_{s}) :K(B)]$ is minimum. 

For a fixed $s$, $K(\tilde{G}_{s})$ is fixed. There are only finitely many Galois subfield extensions $K(B)\leq L_{i} \leq K(\tilde{G}_{s}), i=1,2...r$, such that $[L_{i}: K(B)]$ is is exactly the minimum degree of $K(B)\leq K(\tilde{F}_{t}\cap K(\tilde{G}_{s}) $.

Let $B_{L}\rightarrow B$ denote the normalization of Galois field extension $[ L: K(B)]$
\begin{displaymath}
	\xymatrix{B_{L}\ar[r]& B&\\
		\tilde{F}_{L} \ar[r]\ar[u]& \tilde{F}\ar[r]\ar[u]&T\\
		\tilde{F}_{L{t}}\ar[r]\ar[u]& \tilde{F}_{t}\ar[r]\ar[u]&Spec(k(t))\ar[u]}
\end{displaymath}
The same as previous arguments, 
\begin{align*}
H^{0}_{k(t)}(F_{L_{t}},\mathcal{O}_{\tilde{F}_{L_{t}}})&=\# \text{ irreducible components of } \tilde{F}_{L_{t}}\\
					&=\# \text{ irreducible components of } Spec(K(B_{L}))\times_{Spec(K(B))}Spec(K(\tilde{F}_{t}))\\ &= [L\cap K(\tilde{F}_{t}) :K(B)]
\end{align*}

Apply Semi-continuous Theorem again, we know that $\tilde{\Sigma}_{L} :=\{t\in T| L\cap K(\tilde{F}_{t})=L\}$ is a close set. Let $\Sigma_{L}$ denote the collection of its closed points, and write $W_{s} =\{t|(t,s)\in W\}$. $W_{s}$ is an open set of $T$, dense and irreducible. And the union of all $\tilde{\Sigma}_{L}$ is $W_{s}$, then there exists a $\tilde{\Sigma}_{i}$ contains $W_{s}$. Since $[L_{i}: K(B)]$ are minimum, all the $L_{i}$'s should be the same. 

Same arguments work for a fixed $t$, we conclude that for all closed points $(s,t)$ of $W$, $K(\tilde{F}_{t})\cap K(\tilde{G}_{s})=L$.
\end{proof}

Let's first suppose that $L=K(B)$ and then prove that the composition $G_{s}\circ F_{t}$ is irreducible. To see this, let $K(B)\leq M$ be the smallest field extension containing $K(\tilde{F}_{t})$ and $K(\tilde{G}_{s})$, and let $\tilde{B}$ be the normalization of $B$ in $M$.  Since $K(\tilde{F}_{t})\cap K(\tilde{G}_{s})=L=K(B)$, $M$ is in fact equal to $K(\tilde{F}_{t})\times_{K(B)}K(\tilde{G}_{s})$. We may apply Proposition 2. (ii) with $S_{0}=B$, $\{S_{\lambda}\}=\text{ open subsets of }S_{0}$, $X_{0}=\tilde{B}$, and $Y_{0}=\tilde{F}_{t}\times_{B} \tilde{G}_{s}$. Then $\tilde{B}=\tilde{F}_{t}\times_{B}\tilde{G}_{s}$ at least over a dense open subset of $B$. Hence $(\tilde{B}: A\rightarrow C)$ is a generalized multi-function representing the composition $G_{s}\circ F_{t}$. On the other hand $\tilde{B}$ is irreducible since it is the normalization of an irreducible scheme $B$.

In general, if $L$ is not equal to $K(B)$, let $B'$ be the normalization of $B$ in $L$, then the maps $\tilde{G}_{s}\rightarrow B$ and $\tilde{F}_{t}\rightarrow B$ will factor through $\phi : B' \rightarrow B$ for all $s$, $t$ by the universal property of normalization. Let $F_{t}'$ and $G'$ denote the image of $\tilde{F}$ and $\tilde{G}$ in $A\times B'\times T$ and $B'\times C\times S$, respectively. These are families of multi-functions $(F'_{t}: A\rightarrow B', t\in T)$ and $(G'_{s}: B'\rightarrow C, s\in S')$ such that $G_{s}=G'_{s}\circ \phi^{-1}$ and $F_{t}=\phi\circ F'_{t}$ for all $s$, $t$. (Here we treat $\phi$ as a multi-function since it is a finite morhpism and dominant.)

One can see from construction that $L=K(B')$ is the intersection of two Galois extensions $K(B)\leq K(\tilde{F}_{t})$ and $K(B)\leq K(\tilde{G}_{s})$, so $L$ is Galois over $K(B)$. So after possibly restricting to a dense open set of $B$ and the corresponding normalization,  the components of the multi-function $\phi^{-1}\circ\phi$ represented by $B'\times_{B}B'$ are just the relative automorphisms of $B'$ over $B$ (one for each element of the group $Gal(B'/B)$). But then $G_{s}\circ F_{t}=G'_{s}\circ(\phi^{-1}\circ\phi)\circ F'_{t}$ also splits according to the above Galois group. And one of these components, corresponding to a certain automorphism, moves in a $k$-parameter family. By composing each multi-function $G'_{s}$ with this automorphism we arrive to the situation that (the only) one component of $G'_{s}\circ F'_{t}$ moves in a $k$-parameter family. We shall replace $B$ with $B'$, $F$ with $F'$ and $G$ with $G'$, and we arrived at the previous situation where $K(\tilde{G}_{s})\cap K(\tilde{F}_{t})=K(B)$ for general $s$, $t$. Previous arguments imply that $G_{s}\circ F_{t}$ is irreducible, and therefore the entire family moves in a $k$-dimensional family.

\textbf{Second Step.} In this step we will study the relationship between parameter spaces of composition family and the $k$-dimensional family. Let $R$ be the $k$-dimensional subvariety of the Hilbert scheme of $A\times C$ parameterizing (almost all of) the compositions $G_{s}\circ F_{t}$. As we go along, we consider only families up to equivalence, so we shall freely shrink $R$ wo dense open subsets. By hypothesis, there is a universal family $(H_{r}: A\rightarrow C, r\in R)$ and the composition of the original families defines a rational map $\mu: T\times S\rightarrow R$ such that $G_{s}\circ F_{t}=H_{\mu(t,s)}$ for almost all pairs $(s,t)\in S\times T$.  The rational map exists because the universal property of Hilbert scheme $\mathcal{H}(A\times C)$ .After shrinking $R$ we shall assume that each $H_{r}$ is irreducible.

Next we pick a generic $r\in R$, and let $Q_{r}$ be a component of the inverse image $\mu^{-1}(r)$. 
\begin{claim}
The projections $Q_{r}\rightarrow S$ and $Q_{r}\rightarrow T$ are generically finite and surjective.
\end{claim}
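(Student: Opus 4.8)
The plan rests on a dimension count combined with the invertibility of multi-functions. Recall the setup: $\dim T=\dim S=k$, the rational map $\mu\colon T\times S\to R$ satisfies $G_s\circ F_t=H_{\mu(t,s)}$ with $\dim R=k$, and $Q_r$ is a component of $\mu^{-1}(r)$ for generic $r$. First I would record two harmless normalizations (permitted here since we work up to equivalence of families): replacing $T$ and $S$ by the closures of $\phi_F(T)$ and $\phi_G(S)$ in the relevant Hilbert schemes we may assume $\phi_F,\phi_G$ are the inclusions, so that a generic $F_t$ (resp. $G_s$) determines $t$ (resp. $s$); and by Stein factorizing $\mu$, replacing $R$ by the resulting finite cover and $H$ by its pullback (an equivalent family, as the image in $\mathcal H(A\times C)$ is unchanged), we may assume $\mu^{-1}(r)$ is irreducible for generic $r$, so $Q_r=\mu^{-1}(r)$. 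Since $\mu$ is dominant with $\dim(T\times S)=2k$ and $\dim R=k$, the generic fibre has dimension $k$; hence $\dim Q_r=k=\dim T=\dim S$.

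A dominant morphism between irreducible varieties of equal dimension is automatically generically finite, and the projection of the projective variety $Q_r$ onto a factor has closed image, so a \emph{dominant} projection is surjective. It therefore suffices to prove that $\pi_T\colon Q_r\to T$ and $\pi_S\colon Q_r\to S$ are dominant, and by the symmetry $A\leftrightarrow C$, $F_t\leftrightarrow G_s^{-1}$ it is enough to treat $\pi_T$. I would do this via the auxiliary morphism
\[\Psi\colon T\times S\longrightarrow T\times R,\qquad(t,s)\longmapsto\bigl(t,\mu(t,s)\bigr),\]
and reduce to showing $\Psi$ is dominant. Granting that: $\Psi$ is dominant between irreducible varieties of dimension $2k$, so its image contains a dense open $O\subset T\times R$; for generic $r$ the slice $O\cap(T\times\{r\})$ is dense in $T$, and on $\mu^{-1}(r)=\Psi^{-1}(T\times\{r\})$ the map $\Psi$ is exactly $\pi_T$, so $\pi_T(Q_r)\supseteq O\cap(T\times\{r\})$ and $\pi_T$ is dominant.

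It remains to see $\Psi$ is dominant. If it failed, the generic fibre of $\Psi$ over its proper image $\mathcal W\subsetneq T\times R$ would be positive-dimensional, and the generic point of $\mathcal W$ lies over the generic point of $T$ (because $\mathrm{pr}_T\circ\Psi$ is surjective) and over the generic point of $R$ (because $\mathrm{pr}_R\circ\Psi=\mu$ is dominant). Hence for a generic $t\in T$ and a generic $r\in R$ the set $\{s\in S:G_s\circ F_t=H_r\}$ would be infinite. But for generic $t$ the fibre $F_t$ is a multi-function, so it admits an inverse correspondence $F_t^{-1}$ of bounded degree, and $G_s\circ F_t=H_r$ forces $G_s\subseteq H_r\circ F_t^{-1}$; thus $G_s$ is one of the boundedly many components of the bounded-degree correspondence $H_r\circ F_t^{-1}$, and — since $\phi_G$ is the inclusion — $s$ ranges over a finite set, a contradiction. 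The symmetric treatment of $\pi_S$ composes on the left with $G_s^{-1}$, writing $F_t\subseteq G_s^{-1}\circ H_r$.

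I expect this last finiteness step to be the only real obstacle: one must make sure the generic points produced in the $\Psi$-argument genuinely avoid the loci where $F_t$ or $G_s$ fails to be a multi-function, so that the inverse-correspondence manoeuvre is legitimate; once that is in place, the rest is dimension bookkeeping and the elementary fact that composing with the bounded-degree inverse of a multi-function is finite-to-one on bounded-degree multi-functions.
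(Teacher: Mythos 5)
Your proof is correct in substance, and its engine is exactly the paper's: for fixed generic $t$ and $r$, the identity $G_{s}\circ F_{t}=H_{r}$ forces $G_{s}$ to consist of components of the bounded-degree correspondence $H_{r}\circ F_{t}^{-1}$, which does not depend on $s$, so only finitely many $s$ can occur. What differs is the packaging. The paper applies this finiteness directly to the fibres of the projections of $Q_{r}$: generic fibres are finite, and since $\dim Q_{r}=k=\dim T=\dim S$ the image is $k$-dimensional, hence dense, giving the stated surjectivity. You run the logic in the opposite direction, first proving dominance via the auxiliary map $\Psi(t,s)=(t,\mu(t,s))$ (using the same finiteness fact to exclude a lower-dimensional image) and then deducing generic finiteness from equality of dimensions. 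This works, but it is what forces your extra normalizations (passing to the Hilbert-scheme images of $T$ and $S$, and the Stein-factorization step making $\mu^{-1}(r)$ irreducible so that $Q_{r}$ is the whole fibre); the direct route needs none of this, since finiteness of the generic fibres of $\pi_{T}$ restricted to a single component $Q_{r}$ already yields dominance of that component by dimension count. On the other hand, your normalizations usefully make explicit two hypotheses the paper leaves implicit: that $\dim T=\dim S=k$ and that each multi-function is parametrized only finitely often, which is what turns ``finitely many $G_{s}$'' into ``finitely many $s$''. Two small cautions: in the $\Psi$-argument the infinitely many $s$ must lie in the dense open locus where $G_{s}\circ F_{t}=H_{\mu(t,s)}$ actually holds (not merely where $\mu(t,s)=r$); this is repaired by replacing $T\times S$ with that open set before running the argument, since the closure of the image of $\Psi$ is unchanged. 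And ``surjective because the image of a projective variety is closed'' presumes $T$ and $S$ are complete, which the repeated shrinking in this chapter destroys; like the paper, what you really establish is dominance, which is all the subsequent steps use.
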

\begin{proof}
$Q_{r}\subset S\times T$ is a $k$-dimensional algebraic subset and $G_{s}\circ F_{t}=H_{r}$ for almost all $(s,t)\in Q_{r}$. Then $G_{s}$ is a component of the multi-function $G_{s}\circ F_{t}\circ F_{t}^{-1}=H_{r}\circ F_{t}^{-1}$, which is independent of $s$. Hence for general $F_{t}$ there are at most finitely many $G_{s}$ with the property $G_{s}\circ F_{t}=H_{r}$ ($\approx$ the number of components of $H_{r}\circ F_{t}^{-1}$). Therefore the projection $Q_{r}\rightarrow S$ is generically finite, hence surjective since they are of the same dimension.  Then for almost all $s\in S$ there is a $(t,s)\in Q_{r}$ and similarly for almost all $t\in T$ there is a $(t,s)\in Q_{r}$.
\end{proof}

\textbf{Third Step.} This step will reduce the problem to the case where all of the multi-functions $G_{s}$ and $F_{t}^{-1}$ are graphs of rational maps $g_{s}: B\rightarrow C$ and $f_{t}: B\rightarrow A$. (i.e. They are single-value functions.)

Let $\delta$ and $\gamma$ denote the degree of the projections $H_{r}\rightarrow A$ and $G\rightarrow B\times S$. Then for general $s$ the degree of $G_{s}\rightarrow B$ is also $\gamma$. Let $C^{(\delta)}$ and $C^{(\gamma)}$ denote the symmetric powers of $C$, i.e. $C^{(\delta)}=C\times C\times...\times C /S_{\delta}$. After the claim below, we can represent the multi-functions $H_{r}$ and $G_{s}$ by rational maps $h_{r}: A\rightarrow C^{(\delta)}$ and $g_{s}: B\rightarrow C^{(\gamma)}$, i.e. for general $a\in A$, let $h_{r}(a)$ be the $\delta$-tuple of $H_{r}$-images of $a$, and $g_{s}(b)$ is defined similarly.
\begin{claim}If $H_{r}\rightarrow A$ is a multi-function of degree $\delta$ between two irreducible schemes $H_{r}$ and $A$, then there exists a canonical rational map $h_{r}: A\rightarrow C^{(\delta)}$, where $C^{(\delta)}$ is the symmetric powers of $C$.
\end{claim}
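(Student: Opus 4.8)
The plan is to build $h_r$ by producing its value at the generic point of $A$, i.e.\ a canonical point of $C^{(\delta)}$ over the field $K:=K(A)$, and then spreading it out. First I would shrink $A$ to a dense open $A^{\circ}$ over which the first projection $\pi:H_r\to A$ is finite flat of degree $\delta$: this is legitimate because $H_r$ is closed in $A\times C$ and $C$ is projective, so $\pi$ is proper; it is quasi-finite over a dense open by generic finiteness, hence finite there, and by generic flatness it becomes finite flat of constant rank $\delta$ after a further shrinking. In characteristic $0$ one may shrink once more (generic smoothness) so that $\pi$ is \'etale; then the generic fibre is $\mathrm{Spec}(E)$ with $E=\prod_{i=1}^{m}L_i$ a product of finite separable field extensions of $K$ satisfying $\sum_i[L_i:K]=\delta$.

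Next I would read off the $\delta$ ``$H_r$-images'' at the generic point. The second projection $q:H_r\to C$ restricts on the generic fibre to a morphism $\mathrm{Spec}(E)\to C$, that is, to an $L_i$-point $P_i$ of $C$ for each $i$. Fixing an algebraic closure $\bar K\supseteq K$ (so $\bar K=K^{\mathrm{sep}}$ since the characteristic is $0$), each $P_i$ has $[L_i:K]$ distinct Galois conjugates in $C(\bar K)$, and the resulting family of $\delta$ points of $C(\bar K)$, taken modulo the $S_\delta$-action, is stable under $\mathrm{Gal}(\bar K/K)$. Since a finite group acting on a quasi-projective variety over an algebraically closed field admits a geometric quotient, $C^{(\delta)}(\bar K)=C(\bar K)^{\delta}/S_\delta$, so we obtain a $\mathrm{Gal}(\bar K/K)$-invariant point of $C^{(\delta)}(\bar K)$; by Galois descent it comes from a point $\theta_r\in C^{(\delta)}(K)$. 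A $K(A)$-point of the variety $C^{(\delta)}$ is the same thing as a rational map $A\to C^{(\delta)}$, and this is the desired $h_r$; it is canonical because the only choice made, that of $\bar K$, does not affect $\theta_r$. Spreading $\theta_r$ out to a morphism on a dense open $A^{\circ\circ}\subseteq A^{\circ}$ and base-changing along $\mathrm{Spec}\,k(a)\to A^{\circ\circ}$ for $a$ in the locus where $\pi$ is still \'etale, one checks that $h_r(a)$ is exactly the unordered $\delta$-tuple $\{c\in C:(a,c)\in H_r\}$, as required.

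The one step that needs care is the descent: one must be sure the multiset of $C$-images defines a point of $C^{(\delta)}$ over $K$ itself and not merely over $\bar K$, which is precisely where separability, hence characteristic $0$, is used (also to guarantee a reduced generic fibre). Over a general algebraically closed ground field one would instead invoke the representability of the functor of relative effective $0$-cycles of degree $\delta$ on reduced schemes by $C^{(\delta)}$, and apply it to the finite flat morphism $\pi:H_r^{\circ}\to A^{\circ}$ together with $q$ to obtain the morphism $A^{\circ}\to C^{(\delta)}$ directly. The reduction to a finite flat $\pi$ and the spreading-out are routine.
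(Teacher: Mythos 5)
Your argument is correct (over $\mathbb{C}$, which is the setting here), but it takes a different technical route from the paper. The paper works globally: it forms the normalization $\tilde H_r$ of $H_r$ in the Galois closure of $K(A)\leq K(H_r)$, observes that over a dense open $A'$ the fibre product $\tilde H_r'\times_{A'}H_r'$ splits into components indexed by the Galois group, thereby producing $\delta$ morphisms $\tilde H_r'\to C$, hence a Galois-equivariant morphism $\tilde H_r'\to C^{\delta}\to C^{(\delta)}$, and then descends this to a morphism $A'\to C^{(\delta)}$ using the exactness of the Hom sequence for the quotient $\tilde H_r'\to A'$. You instead work at the generic point: reduce to a finite flat (indeed \'etale) cover, read off the multiset of images in $C(\bar K)$, note it is $\mathrm{Gal}(\bar K/K)$-stable, descend the resulting $\bar K$-point of $C^{(\delta)}$ to a $K(A)$-point using perfectness of $K(A)$, and spread out. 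Both proofs are at bottom Galois symmetrization plus descent, but yours descends a point of $C^{(\delta)}$ rather than a morphism to it, which makes the argument shorter and avoids constructing the Galois-closure cover explicitly, at the cost of relying on characteristic $0$ (separability, reduced generic fibre, and $X(\bar K)^{\mathrm{Gal}}=X(K)$) and on the identification $C^{(\delta)}(\bar K)=C(\bar K)^{\delta}/S_\delta$; your fallback via representability of the degree-$\delta$ relative $0$-cycle functor restores characteristic-freeness, matching the generality of the paper's quotient-descent argument. Two small points to fix: the Galois conjugates of $P_i$ in $C(\bar K)$ need not be \emph{distinct} (the $[L_i:K]$ embeddings may give coinciding points of $C$), so you should speak of the multiset of images counted with multiplicity --- this is harmless since $C^{(\delta)}$ records multiplicities --- and you should note, as the paper's setting guarantees, that $C$ is projective so that $C^{(\delta)}$ exists as a variety.
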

\begin{proof}
Let $\tilde{H}_{r}$ denote the normalization of $H_{r}$ in the Galois closure of $K(A)\leq K(H_{r})$. Then we can find a dense open set $A'$ of $A$ such $Gal(K(\tilde{H}_{r})/K(A))$ acts on $\tilde{H}_{r}'=\tilde{H}_{r}\times_{A}A'$ with quotient $A'$.  We will achieve the morphism $h_{r}: A\rightarrow C^{(\delta)}$ via the following diagram:

\begin{displaymath}
	\xymatrix{\tilde{H}_{r}'\times_{A'}H_{r}' \ar[rr]^{\pi_{1}}\ar[dd]^{\pi_{2}}& & H_{r}'\ar[d] &\\
			& & A'\times C\ar[d]\ar[r]&C\\
			\tilde{H}_{r}'\ar[rr]&&A'}
\end{displaymath}	
Since $\tilde{H}_{r}'\times_{A'}H_{r}'=\bigcup_{\sigma\in Gal(K(\tilde{H}_{r})/K(A))} \sigma(\tilde{H}_{r}')$, the irreducible  components of $\tilde{H}_{r}'\times_{A'}H_{r}'$ correspond to elements of Galois group $Gal(K(\tilde{H}_{r})/K(A))$, the morphism $\tilde{H}_{r}'\times_{A'}H_{r}'\rightarrow H_{r}\rightarrow A'\times C\rightarrow C$ induces $\delta$ morphisms $f_{1},...f_{\delta}$ from $\tilde{H}_{r}$ to $C$. Moreover,
\begin{displaymath}
	\xymatrix{\tilde{H}_{r}' \ar[r]\ar[d]& C\times C\times...\times C\ar[d]\\
		A' \ar[r]& C^{(\delta)}}
\end{displaymath}
For every $a\in A$, let $h_{a}$ denote one of the inverse image of $a$, then $h_{a}$ is mapped to $C^{(\delta)}$ by the above diagram, and its image is independent of choice of $h_{a}$. Hence we get  a well-defined point-wise map from $A$ to $C^{(\delta)}$. By Cor. 1.8.2, the following short sequence is exact:
\[0 \rightarrow Hom(A', C^{(\delta)})  \rightarrow Hom(H_{r}', C^{(\delta)}) \rightrightarrows^{\pi_{1}}_{\pi_{2}} Hom(H_{r}'\times_{A'}H_{r}', C^{(\delta)} ) \rightarrow 0\]
Hence we get a true morphism $h_{r}': A'\rightarrow C^{(\delta)}$, so is the rational map $h_{r}: A\rightarrow C^{(\delta)}$.
\end{proof}
Let $X\subseteq C^{(\gamma)}$ denote the set of those $\gamma$-tuples which are contained in some $\delta$-tuple in the image of $h_{r}$, it is an algebraic subset (the projection image of $h_{r}^{-1}\times_{A}F\times_{B}g_{s}$).  The image of $h_{r}$ is $n$-dimensional since $H_{r}$ is generically finite.  Every $\delta$-tuple contains only finitely many $\gamma$-tuple, because $Q_{r}\rightarrow S$ and $G_{s}\rightarrow B$ are generically finite. Hence $X$ is also $n$-dimensional. For $(t,s)\in Q_{r}$, the image of $g_{s}$ is a $n$-dimensional subvariety of $X$, thus it is a component of $X$.  And this must hold for almost all $g_{s}$. Since $g_{s}$ moves in an irreducible family, the images of $g_{s}$ must all be the same component $C^{*}\subseteq X$. Then we can replace $C$ with $C^{*}$ after possibly shrinking $S$ to a dense open subset; and the multi-functions $G_{s}$ become the graphs of the rational maps $g_{s}:B\rightarrow C^{*}$. Next we can turn around and repeat the whole argument for the compositions $F_{t}^{-1}\circ G_{s}^{-1}\supseteq H_{r}^{-1}$, and replace $A$ with some $A^{*}$, and $F_{t}^{-1}$ with rational maps $f_{t}: B\rightarrow A^{*}$. From now on, we assume that $G_{s}$ and $F_{t}^{-1}$ are graphs of the rational maps $g_{s}$ and $f_{t}$ for all $s$, $t$.

\textbf{Fourth Step.} This step corresponds to Proposition 9 of Elekes and Ronyai's paper \cite{elekes2000combinatorial}. We can look at finite (branched) covers $A'\rightarrow A$ such that each $f_{t}$ factors through it. Each $f_{t}$ has only finitely many factorizations because there are finite many subfield extensions of $K(A)\leq K(B)$, hence we can find a maximal $A'$. We replace $A$ with this cover, so from now on each such cover $A'\rightarrow A$ is an isomorphism. Similarly we can assume that $C$ has no nontrivial finite cover which is a factor of each $g_{s}$.

\textbf{Fifth Step.}
Next we reduce the problem to the case when all $f_{t}$ and $g_{s}$ are birational. For all $(t,s)\in T\times S$ the composite multi-function $G_{s}\circ F_{t}$ is just the closure of the image of the function $(f_{t}, g_{s}): B\rightarrow H_{\mu(t,s)}\subset A\times C$. Then each $f_{t}$ factors through each $H_{r}\rightarrow A$ hence $K(H_{r})=K(A)$ and $H_{r}\rightarrow A$ is birational, and so is $H_{r}\rightarrow C$. Hence each $H_{r}$ is the graph of a birational map $A\rightarrow C$. Now we fix origins $0\in S$ and $0\in T$, and replace $A$ and $C$ with $H_{\mu{(0,0)}}$ (using the projection maps). Then $H_{\mu(0,0)}$  becomes the identity multi-function. The map $(f_{0}(x), g_{0}(x))\rightarrow (f_{0}(x),g_{s}(x))$ is birational, hence the family $\{g_{s}\}$ is obtained from $g_{0}$ via composition with a $k$-dimensional family of birational automorphisms $\Gamma_{s}: C\rightarrow C$. Similarly $f_{t}=\Phi_{t}\circ f_{0}$ for another $k$-dimensional family of birational automorphisms $\Phi_{t}: A\rightarrow A$. But then $G_{s}\circ F_{t}=\Gamma_{s}\circ g_{0}\circ f_{0}^{-1}\circ \Phi_{t}^{-1} \supset \Gamma_{s}\circ \Phi_{t}^{-1}$, and these are in fact equal because of the irreducibility. This implies that we can replace $B$ with $H_{\mu(0,0)}$ using the map $(f_{0}, g_{0})$. So from now on we can assume that $A=B=C$, and $F_{t}$, $G_{s}$ are graphs of the birational automorphisms $\Phi_{t}^{-1}$ and $\Gamma_{s}$.

\textbf{Sixth Step.}
Now we replace the parameter spaces $T$, $S$ and $R$ with their images in the Hilbert scheme $\mathcal{H}(A\times A)$, so we can compare them. Let $\Psi_{r}$ denote the automorphism whose graph is $H_{r}$, then $\Gamma_{s}\circ \Phi_{t}^{-1}=\Psi_{\mu(t,s)}$. Since $\Phi_{0}^{-1}\circ \Gamma_{s}=\Gamma_{s}$, it must belong to the $\Psi$ family, and we find that $S\subset R$. But they are both irreducible and $k$-dimensional hence they are equal. Similarly $T$ is also equal to them. But then $\mu$ is an associative operation on $S$. For fixed $t$, the compositions $\Gamma_{s}\circ\Phi_{t}^{-1}$ are all different, and form a $k$-dimensional irreducible family contained in $\Psi$. Hence $\mu(0,-)$ is generically one-to-one and onto. So $\mu$ has an inverse operation on the right hand side and similarly on the left hand side as well. Therefore it is a rational group structure in the sense that both multiplication and inverse operations are rational map instead of  regular morphism. The family $\Gamma_{s}$ defines a rational action of this group on $A$. It is proved in [Weil] that up to birational equivalence we have a standard family now. Moreover, Emmanuel Breuillard proved that if $\eta>0$ is small enough, the group $\Gamma$ should be nilpotent.

\bibliography{memoireref}
\bibliographystyle{plain}

\end{document}